\colorlet{shadecolor}{yellow!20}
\numberwithin{equation}{section}
\newtheorem{theorem}{Theorem}[section]
\newtheorem{lemma}[theorem]{Lemma}
\newtheorem{prop}[theorem]{Proposition}
\newtheorem{fff}[theorem]{Fact}
\newtheorem{question}[theorem]{Question}
\newtheorem{problem}[theorem]{Problem}
\newtheorem{cor}[theorem]{Corollary}
\newcommand{\thistheoremname}{}
\newtheorem*{genericthm*}{\thistheoremname}
\newenvironment{namedthm*}[1]
  {\renewcommand{\thistheoremname}{#1}%
   \begin{genericthm*}}
  {\end{genericthm*}}
\theoremstyle{definition}       {         
\newtheorem{remark}[theorem]{Remark}
\newtheorem{remarks}[theorem]{Remarks}

\newtheorem{defi}[theorem]{Definition}
\newtheorem{notation}[theorem]{Notation}
}
\newcommand{\N}{\mathbb{N}}
\newcommand{\Z}{\mathbb{Z}}
\newcommand{\R}{\mathbb{R}}
\newcommand{\HH}{\mathcal{H}}
\newcommand{\B}{\overline{B}}
\newcommand{\eps}{\varepsilon}
\def\beq{\begin{equation}}
\def\eeq{\end{equation}}
\newcommand{\semmi}[1]{{}}
\newcommand{\bp}{\begin{proof}}
\newcommand{\ep}{\end{proof}}
\DeclareMathOperator{\ubdim}{\overline{dim}_B}
\DeclareMathOperator{\hdim}{\dim_H}
\DeclareMathOperator{\pdim}{\dim_P}
\DeclareMathOperator{\diam}{diam}
\DeclareMathOperator{\dist}{dist}
\DeclareMathOperator{\Sym}{Sym}
\DeclarePairedDelimiter{\ceil}{\lceil}{\rceil}
\DeclarePairedDelimiter{\floor}{\lfloor}{\rfloor}
\newcommand{\su}{\subset}
\newcommand{\NN}{\mathbb{N}}
\newcommand{\QQ}{\mathbb{Q}}
\newcommand{\RR}{\mathbb{R}}
\newcommand{\iH}{\mathcal{H}}
\newcommand{\iF}{\mathcal{F}}
\newcommand{\sm}{\setminus}
\newcommand{\al}{\alpha}
\newcommand{\be}{\beta}
\title{Lipschitz images and dimensions}
\author{Rich\'ard Balka} 
\address{HUN-REN Alfr\'ed R\'enyi Institute of Mathematics, Re\'altanoda u.~13--15, H-1053 Budapest, Hungary, AND Institute of Mathematics and Informatics, Eszterh\'azy K\'aroly Catholic University, Le\'anyka u.~4, H-3300 Eger, Hungary}
\email{balkaricsi@gmail.com}
\thanks{The authors were supported by the National Research, Development and Innovation Office -- NKFIH, grants no.~124749 and 146922. 
The first author was also supported by the J\'anos Bolyai Research Scholarship of the Hungarian Academy of Sciences.
The second author was also supported by the
National Research, Development and Innovation Office -- NKFIH, grant no.~129335.}
\author{Tam\'as Keleti}
\address{Institute of Mathematics, ELTE E\"otv\"os Lor\'and University, P\'azm\'any P\'eter s\'et\'any 1/c, H-1117 Budapest, Hungary}
\email{tamas.keleti@gmail.com}
\subjclass[2020]{28A78, 28A80, 51F30, 54E45}
\keywords{Lipschitz map, bilipschitz equivalence, H\"older map, self-similar set, Hausdorff dimension, box dimensions, ultrametric space, strong separation condition}
\begin{document}

\begin{abstract} We consider the question which compact metric spaces can be obtained as a Lipschitz image of
the middle third Cantor set, or more generally, as a Lipschitz image of a subset of a given compact metric space. 

In the general case we prove that if $A$ and $B$ are compact metric spaces and the Hausdorff dimension of $A$ is bigger than the upper box dimension of $B$, then there exist a compact set $A'\subset A$ and a Lipschitz onto map $f\colon A'\to B$. 

As a corollary we prove that any `natural' dimension in $\R^n$
must be between the Hausdorff and upper box dimensions.

We show that if $A$ and $B$ are self-similar sets with the strong separation condition with equal Hausdorff dimension and $A$ is homogeneous, then $A$ can be mapped onto $B$ by a Lipschitz map if and only if $A$ and $B$ are bilipschitz equivalent. 

For given $\alpha>0$ we also give a characterization of those compact metric spaces that can be obtained as an $\alpha$-H\"older image of a compact subset of $\R$.
The quantity we introduce for this turns out to be closely related to the upper box dimension.

\end{abstract}
\maketitle

\section{Introduction}

The characterization of compact sets in $\RR^2$ which can be covered by a Lipschitz image of $[0,1]$ was given in the celebrated paper of P.~W.~Jones \cite{J}, which was generalized to $\RR^d$ by K.~Okikiolu \cite{O}. This is called the analyst's traveling salesman theorem. 
Later R.~Schul \cite{S} and I.~Hahlomaa \cite{H} considered the question in Hilbert spaces and general metric spaces, respectively. Motivated by these results, we formulated the following question.  

\begin{question}\label{q:intro}
(a) What compact metric spaces can be obtained as a Lipschitz image of a given compact metric space $X$ or at least as a Lipschitz image of a subset of $X$? 

(b) For example, what 
compact metric spaces can be obtained as a Lipschitz image of
the middle third Cantor set?
\end{question}

\subsection{Lipschitz images of general compact metric spaces}

M.~Laczkovich~\cite{Lacz} posed the following problem.

\begin{problem}[Laczkovich, 1991] \label{p:L} Let $A\subset \mathbb{R}^{d}$ be a measurable set with positive $d$-dimensional Lebesgue measure. Is there a Lipschitz onto map $f\colon A\to [0,1]^{d}$?
\end{problem}

For $d=1$ the positive answer is only an easy exercise.
For $d=2$ the affirmative answer was given first by D.~Preiss~\cite{P}.
By modifying partly the argument of Preiss, J.~Matou\v{s}ek~\cite{M} provided a simpler proof based on a well-known combinatorial lemma due to Erd\H{o}s and Szekeres. Meanwhile, P.~Jones noticed that the $d=2$ case easily follows from an earlier result of N.~X.~Uy \cite{Uy}, see the argument of Jones in \cite{ACP} after Theorem~2.6.
Problem~\ref{p:L} is still open for dimensions $d\geq 3$. 

The following result of A.~G.~Vitu\v{s}kin, L.~D.~Ivanov, and M.~S.~Melnikov~\cite{VIM} shows (see also \cite{K} for a less concise proof) that Laczkovich's problem does not remain true if we only assume that $E$ has positive $d$-dimensional Hausdorff measure 
in a larger dimensional space. 

\begin{theorem}[Vitu\v{s}kin--Ivanov--Melnikov, 1963] There exists a compact set $K$ on the plane such that 
$K$ has positive $1$-dimensional Hausdorff measure
but there exists no Lipschitz onto map $f\colon K\to [0,1]$. 
\end{theorem}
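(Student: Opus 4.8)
Since a Lipschitz map cannot increase Hausdorff dimension but $\mathcal{H}^1(K)>0$ only forces $\hdim K\ge1$, no dimensional obstruction is available here; and $\mathcal{H}^1$ alone does not help either, for although a Lipschitz map never increases $\mathcal{H}^1$-measure it can carry a set of finite $\mathcal{H}^1$-measure onto a genuine interval of positive Lebesgue measure. A cautionary example: the four-corner Cantor set $C\times C$, with $C=\{\sum_{n\ge1}4^{-n}a_n:a_n\in\{0,3\}\}$, has $0<\mathcal{H}^1(C\times C)<\infty$ and is purely $1$-unrectifiable --- so pure unrectifiability is not enough --- yet the linear map $(x,y)\mapsto(2x+y)/3$ sends it onto $[0,1]$ (read base-$4$ digits). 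So the theorem is genuinely an existence statement: one must build a compact set rigid enough that this cannot happen for \emph{any} Lipschitz map. I would first record a reduction: a compact $K$ admits a Lipschitz surjection onto $[0,1]$ if and only if $\mathcal{L}^1(f(K))>0$ for some Lipschitz $f\colon K\to\R$. One implication is trivial; for the other, $f(K)$ is a compact subset of $\R$ of positive Lebesgue measure, so the elementary one-dimensional case of Laczkovich's Problem~\ref{p:L} gives a Lipschitz surjection $f(K)\to[0,1]$, to be post-composed with $f$. Hence it suffices to construct a compact $K\subset\R^2$ with $0<\mathcal{H}^1(K)<\infty$ and $\mathcal{L}^1(f(K))=0$ for every Lipschitz $f\colon K\to\R$.

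I would construct $K=\bigcap_{n\ge0}K_n$ as a Cantor--Moran set: $K_0=[0,1]^2$, and $K_{n+1}$ is obtained from $K_n$ by replacing each of its (closed, disjoint) squares $Q$ of side $s_n$ with $m_n$ well-separated sub-squares of side $s_{n+1}$, placed according to a prescribed finite pattern $P_n$. If the patterns have, at almost all levels, the same ``filling ratio'' $m_n=s_n/s_{n+1}$ as the four-corner pattern, then $N_ns_n$ stays comparable to $1$ (this forces $\mathcal{H}^1(K)<\infty$, covering $K$ by $N_n$ squares of diameter $\sqrt2\,s_n$), and with definite gaps between sub-squares the uniform measure $\mu$ on $K$ obeys a Frostman bound $\mu(B(x,r))\le Cr$, whence $\mathcal{H}^1(K)>0$ by the mass-distribution principle. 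The essential point --- and the reason the four-corner example forces it --- is that the patterns must \emph{vary} with $n$: a single repeated pattern can be ``efficient'' in some direction, its projection tiling an interval with no gap, so that $K$ surjects onto $[0,1]$. I would choose the sequence $(P_n)$ by a diagonal scheme so that for each Lipschitz constant $L\in\N$ there are arbitrarily fine ``reserved'' levels $n$ at which $P_n$ leaves, in every direction, a gap of definite relative size, and does so \emph{robustly}: even when the images of distinct squares $Q$ overlap and fill one another's gaps, a definite gap survives. (A single pattern already controls all directions and all $L$ at its own scale; it is the overlap-robustness, which is inherently multi-scale, that forces the patterns to vary.) At those reserved levels one must slightly lower $N_ns_n$, and the losses are taken summable so that $N_ns_n$ still tends to a positive limit.

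Granting such a $(P_n)$, the proof would finish as follows. For an $L$-Lipschitz $f$, at a reserved level $n$ for $L$ the set $f(K)\subseteq f(K_{n+1})$ is the union of the $N_n$ sets $f(K\cap Q_i)$, each of which --- since $f$ is Lipschitz, so $f(K\cap Q_i)$ is covered by $m_n$ intervals of length $\lesssim L\,s_{n+1}$ clustered in an interval of length $\lesssim L\,s_n$ --- leaves, by the design of $P_n$ and its overlap-robustness, a definite relative gap in every interval it meets; hence $f(K)$ misses a fixed fraction $\varepsilon_0>0$ of every interval of length $\gtrsim s_{n+1}$. Since this recurs at arbitrarily fine scales, $f(K)$ has lower density at most $1-\varepsilon_0$ at every point, and so $\mathcal{L}^1(f(K))=0$ by the Lebesgue density theorem. (A Baire-category variant --- topologising the admissible Cantor--Moran data and showing the target property is comeagre --- is also plausible and may dispense with the explicit bookkeeping.)

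The heart of the matter, and the step I expect to be hardest, is exactly the existence of patterns that defeat \emph{every} Lipschitz map. The naive estimates give nothing: one-scale counting merely yields $N_ns_n\gtrsim1$, consistent with surjectivity; iterating the trivial Lipschitz bound reproduces the trivial bound; and measure-counting is hopeless, since $\mathcal{L}^1(f(K))$ can genuinely be as large as $\Lip(f)\,\mathcal{H}^1(K)>0$, so ``$\mathcal{L}^1(f(K))=0$'' must come from a structural density mechanism rather than from smallness. Moreover a Lipschitz map is not approximately affine at small scales --- smoothing yields an affine approximation only up to an error comparable to the scale --- so the gap at a reserved level has to be read off the metric geometry of how the $m_n$ child-images cluster, not off a gradient; keeping $\mathcal{H}^1(K)>0$ both caps the gap any single scale can contribute (forcing the summability above) and forbids $K$ from lying in a rectifiable curve (which would itself give a surjection onto $[0,1]$); and the overlaps of piece-images, through which a clever $f$ could route one square's content over another's gap, mean the pattern condition must be stable under all such routings, uniformly over directions and over $L$. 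Arranging all of these to hold simultaneously through one diagonal construction --- rather than establishing any individual inequality --- is where the real work lies.
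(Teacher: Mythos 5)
This theorem is stated in the paper without proof, as a cited result of Vitu\v{s}kin, Ivanov and Melnikov \cite{VIM} (with a detailed modern exposition in \cite{K}), so there is no in-paper argument to compare against; your proposal therefore has to stand on its own. Its framing is sound and well informed: the four-corner counterexample is correct (the map $(x,y)\mapsto(2x+y)/3$ does surject it onto $[0,1]$, so pure unrectifiability and $0<\mathcal{H}^1<\infty$ are indeed insufficient), the reduction to ``$\mathcal{L}^1(f(K))=0$ for every Lipschitz $f\colon K\to\R$'' via the $d=1$ case of Problem~\ref{p:L} is valid, and the overall shape --- a nonhomogeneous Cantor--Moran set in the plane, with a density-point argument showing every Lipschitz image is Lebesgue-null --- is the shape of the actual construction.

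The genuine gap is the one you yourself flag: the existence of patterns $P_n$ with the ``overlap-robust gap'' property is asserted, not proved, and it is the entire mathematical content of the theorem. Concretely, at a reserved level the image $f(K)$ is a union of $N_n$ sets $f(K\cap Q_i)$, each lying in an interval of length $\lesssim L s_n$; since $N_n s_n\asymp 1$ these intervals cover the ambient interval with large \emph{local} multiplicity, and nothing in your sketch rules out that the gaps of one piece are filled by the images of many other pieces --- this is exactly the mechanism by which the four-corner set surjects onto $[0,1]$, and it is not a phenomenon controlled by the Lipschitz constant $L$ alone, so diagonalising over $L$ does not address it. No candidate pattern is exhibited, no quantitative lemma is stated that such a pattern would have to satisfy, and no argument is given that the required uniformity over all directions, all placements of the $N_n$ piece-images, and all scales can be achieved while keeping $N_n s_n$ bounded below (which you correctly note is in tension with leaving gaps). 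Until that lemma is formulated and proved --- this is where \cite{VIM} and \cite{K} do their real work, via a rather delicate multi-scale counting argument --- the proposal is a plausible programme rather than a proof.
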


Let $\hdim$ and $\ubdim$ denote the Hausdorff and upper box dimensions, respectively. 
Keleti, M\'ath\'e, and Zindulka~\cite{KMZ} proved the following positive result
under the stronger assumption $\hdim A>d$.

\begin{theorem}[Keleti--M\'ath\'e--Zindulka, 2014] \label{t:KMZ}
Let $A$ be a
compact metric space 
with $\hdim A>d$. Then there exists a Lipschitz onto map $f\colon A\to [0,1]^d$. 
\end{theorem}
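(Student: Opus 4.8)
The plan is to reduce the statement to building a single Lipschitz surjection from a well-chosen compact subset of $A$ onto $[0,1]^d$, and then to construct that surjection by a Cantor-type scheme adapted to the dyadic decomposition of the cube. For the reduction, note first that it suffices to find a compact $A'\subseteq A$ and a Lipschitz onto map $f\colon A'\to[0,1]^d$: extend $f$ coordinatewise to a Lipschitz map $\widetilde f\colon A\to\R^d$ by the McShane--Whitney formula (or Kirszbraun) and compose with the $1$-Lipschitz nearest-point retraction of $\R^d$ onto the convex set $[0,1]^d$; this is Lipschitz on $A$ and still onto. Next fix $s$ with $d<s<\hdim A$. By Frostman's lemma there is a Borel probability measure $\mu$ supported on a compact subset of $A$ and a constant $C$ with $\mu(B(x,r))\le Cr^{s}$ for all $x,r$; to obtain genuinely two-sided mass control I would in addition pass to a compact subset with $0<\mathcal{H}^{s}<\infty$ and rescale the metric so that its total $\mathcal{H}^{s}$-measure lies in $(\tfrac12,1]$, so that \emph{every} cover by sets of diameter $\le\delta$ has the $s$-th powers of the diameters summing to a fixed positive constant.

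The scheme itself would be a family of compact sets $A_Q\subseteq A$ indexed by the dyadic subcubes $Q$ of $[0,1]^d$ of side $2^{-Mn}$, where $M$ is a large integer fixed at the very end, such that: (i) $A_{Q'}\subseteq A_Q$ whenever $Q'\subseteq Q$; (ii) for a fixed $c_2>0$ the $2^{Md}$ children of each $A_Q$ are pairwise at distance $\ge c_2 2^{-Mn}$; (iii) $\diam A_Q\le c_1 2^{-Mn}$; and (iv) $\mu(A_Q)\ge c_0 2^{-Mnd}$. Conditions (ii)--(iii) make the map $f$ sending $x\in\bigcap_n\bigcup_{\ell(Q)=2^{-Mn}}A_Q$ to the unique point of $\bigcap\{\,\overline Q: x\in A_Q\,\}$ well defined and Lipschitz with constant $O(c_1/c_2)$, while (iv) forces every $A_Q$ to be nonempty, so by compactness every $y\in[0,1]^d$ has a nonempty fibre and $f$ is onto; after the reduction step this is all that is needed.

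The real content is the inductive step: inside $A_Q$, which satisfies $\diam A_Q\le c_1 2^{-Mn}$ and $\mu(A_Q)\ge c_0 2^{-Mnd}$, one must produce the $2^{Md}$ children. Cover $A_Q$ by balls of radius $\tfrac13 c_2 2^{-M(n+1)}$; since $s>d$ and $M$ is large, each such ball has $\mu$-mass at most $C(c_2 2^{-M(n+1)})^{s}$, far below the target mass $c_0 2^{-M(n+1)d}$ of a single child, so the balls can be merged greedily into clusters whose mass first exceeds $c_0 2^{-M(n+1)d}$; the total mass $\mu(A_Q)$ being $2^{Md}$ times the target yields at least $2^{Md}$ such clusters, of which we keep $2^{Md}$, shrink them slightly to get the separation in (ii), and take closures to stay inside $A$. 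The main obstacle is maintaining (iii) at the same time: a cluster assembled from many small balls could have diameter comparable to $\diam A_Q$ rather than to $2^{-M(n+1)}$, and in a pathological metric space a set of large $\mu$-mass need not be geometrically small. This is exactly where the surplus $s-d>0$ has to be spent --- I expect one must replace the fixed subdivision factor $2^{-M}$ by an \emph{adaptive}, non-uniform choice of child-cube sizes dictated by the local geometry of $A_Q$ (possibly combined with a self-similar rescaling of the metric on each piece so that the two-sided mass bounds are restored at unit scale), chosen so that each child can be taken to sit inside a single small ball and hence be geometrically small, while still carrying enough mass to keep the image onto. Reconciling ``heavy enough'' (for surjectivity) with ``small enough'' (for the Lipschitz bound) uniformly down all scales is the technical heart; the rest is bookkeeping with $c_0,c_1,c_2$ and $M$.
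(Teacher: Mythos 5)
Your reduction step is fine and matches how the paper handles the extension issue (McShane--Whitney coordinatewise plus the $1$-Lipschitz nearest-point retraction onto the convex cube; the paper invokes the same idea together with Kirszbraun to pass from a map defined on a compact subset to one defined on all of $A$). The problem is that the core of your argument --- the inductive construction of the nested family $A_Q$ with simultaneous mass lower bounds, diameter upper bounds, and separation --- is exactly the content of the theorem, and you do not prove it. As you yourself observe, in a general compact metric space a set of large $\mu$-mass need not be contained in a small ball, and greedy clustering of Frostman-heavy balls gives neither the diameter bound (iii) nor the separation (ii): two clusters produced this way can be intertwined at distance zero, and pruning them to enforce separation at every scale can destroy the mass bound (iv). Your proposed remedy (``an adaptive, non-uniform choice of child-cube sizes dictated by the local geometry'') is a hope, not an argument; reconciling ``heavy'' with ``small'' uniformly down all scales is not bookkeeping with $c_0,c_1,c_2,M$ --- it is the theorem, and no direct construction of this kind is known for general compact metric spaces. (A secondary point: passing to a subset with $0<\mathcal H^{s}<\infty$ gives only an upper Frostman bound for the restricted measure, not the two-sided local control your clustering step tacitly leans on.)

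The missing idea is a structural reduction that makes your scheme actually run. The route taken in \cite{KMZ} (this paper does not reprove Theorem~\ref{t:KMZ}; it cites it and generalizes it in Theorem~\ref{t:dimimpliesholder} by the same method) is: fix $d<t<\hdim A$ and apply the Mendel--Naor ultrametric skeleton theorem \cite{MN} to extract a compact $A'\subset A$ with $\hdim A'>t$ that is bilipschitz equivalent to an ultrametric space. In an ultrametric space two balls of the same radius are either disjoint or equal (Fact~\ref{f:u}), so the space carries a canonical tree of balls in which measure-heavy pieces can be taken to be single balls; this is precisely what restores the ``heavy $\Rightarrow$ small and well separated'' implication your induction needs. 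One then builds a $t$-H\"older onto map $A'\to[0,1]$ and composes with a $(1/d)$-H\"older space-filling curve $[0,1]\to[0,1]^d$ (in this paper, more generally with a $(1/s)$-H\"older parametrization of the target coming from Theorem~\ref{t:delta1}); since $t/d>1$ the composition is Lipschitz, and your extension step finishes. Without the ultrametric reduction (or an equivalent deep structural input), your proposal does not close.
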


Our first main result is the following
generalization of Theorem~\ref{t:KMZ}. 
Here we state it only for Lipschitz maps 
but in fact we prove a more general statement for H\"older maps. 

\begin{theorem}
\label{t:dimimplieslip} 
Let $A$ 
and $B$ be compact metric spaces such that $\hdim A > \ubdim B$.
Then there exists a compact set $A'\su A$ and a Lipschitz onto map $f\colon A'\to B$.
\end{theorem}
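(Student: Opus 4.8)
The plan is to reduce the problem to a statement about trees and then build the map level by level. First I would fix $s$ with $\ubdim B < s < \hdim A$. The choice of $s$ below $\hdim A$ gives, via Frostman's lemma, a Borel probability measure $\mu$ supported on a compact set $A' \subset A$ with $\mu(U) \le C\,(\diam U)^s$ for every set $U$; the choice of $s$ above $\ubdim B$ gives, for every $\delta>0$, a covering of $B$ by at most $N(\delta) \le C' \delta^{-s}$ sets of diameter at most $\delta$. The idea is to exploit the mismatch: at scale $\delta$, the measure $\mu$ splits into pieces that are ``wider'' (in $\mu$-mass terms) than the number of $\delta$-pieces needed to cover $B$, so there is enough room to map $A'$ onto $B$ without stretching.

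Next I would set up the combinatorial bookkeeping. Choose a rapidly decreasing sequence of scales $\delta_0 > \delta_1 > \delta_2 > \cdots \to 0$ (say $\delta_{k+1} = \delta_k^{M}$ for a large fixed $M$, or $\delta_{k+1}$ small enough relative to $\delta_k$ depending on the constants). At each level $k$ I partition (a large $\mu$-mass portion of) $A'$ into finitely many disjoint compact ``children'' pieces of diameter $\approx \delta_k$, refining the level-$(k-1)$ partition; the Frostman condition forces each level-$(k-1)$ piece to have many level-$k$ children, roughly $\delta_k^{-s}\big/\delta_{k-1}^{-s}$ of them up to constants, since each child carries $\mu$-mass at most $C\delta_k^s$ while the parent has mass about $\delta_{k-1}^s$. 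On the $B$ side I take a nested sequence of finite covers $\mathcal{B}_k$ of $B$ by sets of diameter $\le \delta_k/2$, with $|\mathcal{B}_k| \le C'(\delta_k/2)^{-s}$, and I arrange each element of $\mathcal{B}_k$ to be contained in an element of $\mathcal{B}_{k-1}$. The key counting point is then that, after adjusting constants and perhaps thinning, each piece on the $A$-side at level $k-1$ has at least as many level-$k$ children as its assigned $B$-piece has level-$k$ subpieces, so one can define a level-compatible surjection from $A$-pieces at level $k$ onto $\mathcal{B}_k$ extending the level-$(k-1)$ assignment.

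Then I would pass to the limit. A point $x \in A'$ lies in a nested sequence of pieces $P_0(x) \supset P_1(x) \supset \cdots$ with $\diam P_k(x) \to 0$; its image pieces $Q_0 \supset Q_1 \supset \cdots$ in the $B$-covers also shrink to $0$ (since $\diam Q_k \le \delta_k/2 \to 0$), so $\bigcap_k \clK(Q_k)$ is a single point, which I declare to be $f(x)$. Surjectivity follows because the assignments are onto at every level and $B$ is compact (every $y \in B$ lies in a nested sequence from the covers, which is hit). For the Lipschitz estimate: if $x,y \in A'$ with $\dist(x,y) \approx \delta_k$, they agree at level $k-1$ or so but may differ at level $k$; since $f(x), f(y)$ both lie in the level-$(k-1)$ image piece of diameter $\le \delta_{k-1}/2$, we get $\dist(f(x),f(y)) \lesssim \delta_{k-1}$, and if the scales $\delta_k$ are chosen with bounded ratios $\delta_{k-1}/\delta_k$ this is $\lesssim \dist(x,y)$; the H\"older version comes from allowing larger ratios. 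Here I must be slightly careful that the diameters of the $A$-side pieces are comparable to (not much smaller than) $\delta_k$, which can be ensured by not subdividing pieces that are already small, or by absorbing such pieces appropriately.

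The main obstacle I expect is the combinatorial matching at each level together with the geometric realizability of the partitions: one needs the $A$-side pieces to be genuinely compact, nested, of controlled diameter, and numerous enough, while simultaneously the $B$-side covers are nested with the right cardinality bound; reconciling ``$\mu$-mass is small'' with ``diameter is not too small'' on the $A$-side (so that the Lipschitz bound, which is a diameter statement, actually holds) is the delicate point. This is presumably where the hypothesis $\hdim A > \ubdim B$ (strict inequality) is used crucially, giving the slack $\delta_k^{-s_A}$ versus $\delta_k^{-s_B}$ with $s_B < s_A$ that makes the counting work with room to spare, and it is also why one proves the H\"older statement in general: trading the exponent for the ratio of consecutive scales is exactly what buys the freedom to fix up the geometry. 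I would handle the bilipschitz/self-similar refinements and the ``natural dimension'' corollary separately, after this core construction is in place.
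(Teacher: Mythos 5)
Your overall architecture (Frostman measure on the $A$ side, box-counting covers on the $B$ side, a level-by-level surjective matching, then a limit) is the natural first attempt, but it has a gap at exactly the point you flag as ``delicate'', and it is not the point you name. Your Lipschitz estimate rests on the claim that if $x,y\in A'$ satisfy $d(x,y)\approx\delta_k$ then $x$ and $y$ lie in the same piece at level $k-1$; equivalently, you need the level-$j$ pieces of the $A$-side partition to be pairwise separated by distances comparable to $\delta_j$, at every level simultaneously. No partition of a general compact metric space into small-diameter compact pieces has this property (already $[0,1]$ fails: by connectedness two pieces must be at distance $0$), so one must discard points near the ``boundaries'' at every scale, and the whole difficulty is to do this while keeping the Hausdorff dimension of what survives above $\ubdim B$. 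Since the hypothesis gives $\hdim A>\ubdim B$ with possibly tiny slack, you cannot afford to lose any definite fraction of dimension in this step; producing a compact subset of dimension still exceeding $t$ that has this tree-with-separation (i.e.\ ultrametric) structure is precisely the content of the deep Mendel--Naor theorem, which the paper invokes and which does not follow from Frostman's lemma plus bookkeeping. A secondary problem is the matching step itself: the number of level-$k$ children of a single level-$(k-1)$ piece of $B$ is not controlled by $(\delta_{k-1}/\delta_k)^{s}$, because upper box dimension does not localize --- all $C'\delta_k^{-s}$ elements of $\mathcal{B}_k$ could sit inside one parent. Taking $\delta_k$ to decrease very fast absorbs this, but then $d(f(x),f(y))\lesssim\delta_{k-1}$ against $d(x,y)\gtrsim\delta_k$ yields only a H\"older map, not a Lipschitz one.

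The paper's proof sidesteps both issues by factoring through the line: Mendel--Naor gives a compact ultrametric-equivalent $A'\subset A$ with $\hdim A'>t$; by \cite{KMZ} such a set maps onto $[0,1]$ by a $t$-H\"older map; and a global travelling-salesman-type quantity $\delta^s(B)$, shown finite when $\ubdim B<s$ via the bound $\delta^s(B)\lesssim\sum_n N(B,u^n)u^{ns}$ coming from a lexicographic ordering of a covering tree of $B$, produces a $(1/s)$-H\"older map from a compact subset of $[0,1]$ onto $B$. Composing with $t=s$ gives the Lipschitz map. Your plan would become viable if you replaced the ad hoc $A$-side partition by the Mendel--Naor ultrametric subset and replaced the per-parent counting on the $B$ side by a global summable bound of the above type.
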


In order to get a Lipschitz onto map defined on the whole $A$ we need to extend the Lipschitz
function obtained in Theorem~\ref{t:dimimplieslip} to $A$.
To this end, we prove the 
following lemma, which shows that this is 
always possible if $A$ is an ultrametric space
(see the definition in Section~\ref{s:prel}).

\begin{lemma}\label{l:extension}
Let $X$ be a compact ultrametric space and $Y$ be any complete metric space.
Then for any $A\su X$ any Lipschitz function $f\colon A\to Y$ 
can be extended to $X$ as a Lipschitz function with the
same Lipschitz constant.
\end{lemma}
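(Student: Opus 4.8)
The plan is to reduce the extension of a Lipschitz map on an ultrametric space to a one-point-at-a-time argument together with a compactness/completeness step, exploiting the key rigidity of ultrametrics: closed balls form a nested tree structure, and the ultrametric inequality $d(x,z)\le\max\{d(x,y),d(y,z)\}$ makes the worst-case Lipschitz obstruction trivial to control. Concretely, let $L=\Lip(f)$; without loss of generality $L>0$ (if $f$ is constant, extend by that constant). First I would reduce to extending $f$ to $X=A\cup\{p\}$ for a single point $p\in X\sm A$, then handle the general case. For a single point $p$, I claim one can simply pick \emph{any} point $a\in A$ that is closest to $p$ and set $f(p)=f(a)$; this works because for any other $b\in A$, the ultrametric inequality gives $d(p,b)\le\max\{d(p,a),d(a,b)\}$, and since $d(p,a)\le d(p,b)$ by minimality, if $d(p,a)<d(p,b)$ then $d(a,b)=d(p,b)$ (the isosceles property: in an ultrametric, $d(p,b)=\max\{d(p,a),d(a,b)\}$ and when the two arguments differ the max equals the larger one, so $d(a,b)=d(p,b)$), whence $d_Y(f(p),f(b))=d_Y(f(a),f(b))\le L\,d(a,b)=L\,d(p,b)$; and if $d(p,a)=d(p,b)$ then $d_Y(f(p),f(b))=d_Y(f(a),f(b))\le L\,d(a,b)\le L\max\{d(a,p),d(p,b)\}=L\,d(p,b)$. (By compactness of $A$ such a closest point $a$ exists, and if it does not — e.g.\ in the non-compact refinements — one uses an almost-closest point and a limit; here compactness of $X$ suffices.) So the one-point extension preserves the Lipschitz constant exactly.

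Next I would upgrade from one point to the full space. Consider the poset of pairs $(C,g)$ where $A\su C\su X$ and $g\colon C\to Y$ is an $L$-Lipschitz extension of $f$, ordered by extension. By the one-point step every maximal element has domain $X$, so it suffices to produce a maximal element, i.e.\ to verify the chain condition for Zorn's lemma. Given a chain, its union $g^\ast$ is defined on $C^\ast=\bigcup C$ and is $L$-Lipschitz there; the only subtlety is that $g^\ast(C^\ast)$ might a priori fail to land in a place where we can continue, but since $Y$ is complete and $g^\ast$ is uniformly continuous on $C^\ast$, it extends continuously (still $L$-Lipschitz) to $\cl(C^\ast)$, which is again a valid member of the poset dominating the chain. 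Thus Zorn's lemma applies and a maximal, hence total, $L$-Lipschitz extension exists.

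The main obstacle — and the only place ultrametricity is genuinely used — is the one-point extension step: in a general metric space the set of admissible values for $f(p)$ is $\bigcap_{a\in A}\B_Y(f(a),L\,d(p,a))$, which can be empty, and one must instead reason about the geometry of $Y$ (as in Kirszbraun-type theorems, which require much more structure on $Y$). Ultrametricity collapses this intersection problem: because the balls $\{\B(p,d(p,a)):a\in A\}$ are totally ordered by inclusion and $d(p,\cdot)$ takes its minimum at some $a_0$, the single ball $\B_Y(f(a_0),L\,d(p,a_0))$ is already contained in all the others (via the isosceles computation above), so picking $f(p)=f(a_0)$ automatically satisfies every constraint. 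Once this is in hand, the rest is the routine Zorn/completion packaging described above, and no restriction on $Y$ beyond completeness is needed.
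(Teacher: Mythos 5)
Your proof is correct, but it takes a genuinely different route from the paper's. The paper first establishes a retraction lemma (its Lemma~5.1): for a compact ultrametric $X$ and compact $A\su X$ there is a Lipschitz-$1$ retraction $g\colon X\to A$, built by fixing one point of $A$ on each sphere $S(x,\dist(x,A))$ meeting $A$; the extension is then just $f\circ g$ after first extending $f$ to $\overline{A}$ using completeness of $Y$. The delicate part of that route is checking $d(g(x),g(y))\le d(x,y)$ when \emph{both} $x$ and $y$ lie outside $A$, which is the longest case of that proof and requires the sphere-wise consistent choice. Your one-point-at-a-time scheme sidesteps exactly that case: when you adjoin $p$ to an already-extended domain $C$, every constraint is against a point $b\in C$ where the function is already known to be $L$-Lipschitz, so the single inequality $d(a,b)\le\max\{d(a,p),d(p,b)\}=d(p,b)$ (for $a$ a nearest point of $C$ to $p$) finishes the verification. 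The price is the Zorn's lemma packaging and the loss of the retraction statement as a reusable byproduct, whereas the paper's retraction is essentially explicit. Two minor points: your parenthetical claim that $\B(f(a_0),L\,d(p,a_0))$ is \emph{contained} in all the other constraint balls in $Y$ is not literally true in a general metric space $Y$ and is not needed (you only need, and do prove, that its center lies in all of them); and in the Zorn step you should say explicitly that a maximal element has closed, hence compact, domain (otherwise it extends to its closure), which is what guarantees the nearest point you invoke actually exists.
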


Since it is well known and easy to show (see Lemma~\ref{sscbilipultra}) that
any self-similar set with the strong separation condition (SSC) is bilipschitz equivalent
to an ultrametric space (see the definitions in Section~\ref{s:prel}),
Theorem~\ref{t:dimimplieslip} and Lemma~\ref{l:extension} immediately give
the following.

\begin{cor}\label{c:dimwithextension}
Let $A$ be a complete ultrametric space or a self-similar set with the strong separation condition (SSC) and let $B$ be a compact metric space such that $\hdim A > \ubdim B$. Then $A$ can be mapped onto $B$ by a Lipschitz map.
\end{cor}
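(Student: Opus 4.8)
The plan is to assemble Corollary~\ref{c:dimwithextension} directly from the two preceding results, so almost all of the work has already been done; what remains is to check that the hypotheses line up. First I would observe that a complete ultrametric space $A$ with $\hdim A > \ubdim B$ is in particular compact: indeed, a space of finite upper box dimension is totally bounded, and since $\hdim A \le \ubdim A$ (a standard inequality) the hypothesis $\hdim A > \ubdim B \ge 0$ forces $\ubdim A < \infty$, hence $A$ is totally bounded, and being complete it is compact. Thus Theorem~\ref{t:dimimplieslip} applies with this $A$ and $B$, producing a compact set $A' \subset A$ and a Lipschitz onto map $f\colon A'\to B$.

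Next I would apply Lemma~\ref{l:extension} with $X = A$ (a compact ultrametric space), $Y = B$ (which is complete, being compact), and the subset $A' \subset A$ together with the Lipschitz function $f$: the lemma extends $f$ to a Lipschitz function $\tilde f\colon A \to B$ with the same Lipschitz constant. Since $\tilde f$ already maps $A'$ onto $B$, a fortiori $\tilde f$ maps $A$ onto $B$. This proves the ultrametric case.

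For the self-similar case, let $A$ be a self-similar set satisfying the strong separation condition. As remarked in the excerpt just before the corollary, such an $A$ is bilipschitz equivalent to a compact ultrametric space $\tilde A$; fix a bilipschitz homeomorphism $h\colon A \to \tilde A$. Bilipschitz maps preserve both Hausdorff and upper box dimension, so $\hdim \tilde A = \hdim A > \ubdim B$, and the ultrametric case just proved gives a Lipschitz onto map $g\colon \tilde A \to B$. Then $g \circ h\colon A \to B$ is a composition of a bilipschitz map and a Lipschitz map, hence Lipschitz, and it is onto since both factors are. This completes the proof.

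The only genuine subtlety — and it is minor — is the bookkeeping about compactness and completeness: Theorem~\ref{t:dimimplieslip} is stated for compact metric spaces, whereas the corollary hypothesizes only a \emph{complete} ultrametric space, so one must spend a sentence upgrading completeness to compactness via finiteness of $\ubdim A$. Everything else is a routine citation of Theorem~\ref{t:dimimplieslip}, Lemma~\ref{l:extension}, the bilipschitz invariance of the relevant dimensions, and the standard fact that a self-similar set with the SSC is bilipschitz equivalent to an ultrametric space (whose verification is indicated as ``well known and easy'' in the text).
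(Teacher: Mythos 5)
Your overall route is exactly the paper's: apply Theorem~\ref{t:dimimplieslip} to get a Lipschitz onto map from a compact $A'\subset A$, extend it to $A$ via Lemma~\ref{l:extension}, and reduce the self-similar case to the ultrametric one by a bilipschitz change of metric. That part is fine. However, the one step you flag as ``minor bookkeeping'' --- upgrading \emph{complete} to \emph{compact} --- is genuinely wrong as written. From $\hdim A \le \ubdim A$ and $\hdim A > \ubdim B$ you can only conclude $\ubdim A > \ubdim B$, a \emph{lower} bound on $\ubdim A$; nothing in the hypotheses bounds $\ubdim A$ from above, so you cannot deduce total boundedness. Concretely, the Baire space $\N^{\N}$ with the ultrametric $d(x,y)=2^{-\min\{n:\,x_n\neq y_n\}}$ is a complete, bounded ultrametric space of infinite Hausdorff dimension; it satisfies $\hdim A>\ubdim B$ for every compact $B$ but is not compact, so Theorem~\ref{t:dimimplieslip} does not apply to it directly.

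The correct repair is not to make $A$ compact but to make the \emph{domain of the theorem} compact: pick $t$ with $\ubdim B<t<\hdim A$ and find a compact $A''\subset A$ with $\hdim A''>t$ (for separable $A$ this is Howroyd's theorem, which the paper invokes in the remark after Theorem~\ref{t:dimimpliesholder} precisely to drop compactness of the source space). Applying Theorem~\ref{t:dimimplieslip} to $A''$ and $B$ gives a compact $A'\subset A''\subset A$ and a Lipschitz onto map $f\colon A'\to B$. For the extension step you should then observe that the retraction in Lemma~\ref{l:lipretract} only needs the \emph{subset} $A'$ to be compact (so that $\dist(x,A')$ is attained); the ambient ultrametric space may be merely complete, and the argument goes through verbatim. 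With these two adjustments your proof is complete and coincides with the paper's intended argument.
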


In Section~\ref{s:genlip} we also prove a  result (Proposition~\ref{p:ultraholder} \eqref{i:lip}), which is slightly stronger than Corollary~\ref{c:dimwithextension}: it can be also applied for some cases when $\hdim A=\ubdim B$.
To give a partial answer to Question~\ref{q:intro} (b) we give a more explicit sufficient condition (Corollary~\ref{c:Cantor}) for the case when $A$ is the middle third Cantor set.

In many cases, for example if $A$ is connected and $B$ is non-connected, we clearly cannot have a Lipschitz map from $A$ onto $B$ but we might hope for an extension to a space that contains $B$, obtaining a cover of $B$ by a Lipschitz image of $A$, similarly as in the classical analyst's traveling salesman theorem. It is easy to prove that any
real valued Lipschitz function defined on a subset of a metric space $X$ has a Lipschitz extension to the whole space $X$, see e.g.~\cite[p.~202]{Fe}. Applying this extension in each coordinate, we obtain that every Lipschitz function from a subset of a metric space $X$ to $\R^n$ has a Lipschitz extension to the whole $X$. Using this and Kirszbraun's extension theorem \cite{Kir}, Theorem~\ref{t:dimimplieslip} yields the next corollary.

\begin{cor}
Suppose that $X$ and $Y$ are Hilbert spaces or $X$ is an arbitrary metric space and $Y=\R^n$ for some $n$.
Let $A$ be a compact subset of $X$ and $B$ be a compact subset of $Y$ such that $\hdim A > \ubdim B$. Then $B$ can be covered by a Lipschitz image of $A$, that is, there is a Lipschitz map $f\colon A\to Y$ with $B\subset f(A)$.
\end{cor}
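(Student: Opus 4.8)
The plan is to combine Theorem~\ref{t:dimimplieslip} with a Lipschitz extension theorem suited to each of the two settings in the hypothesis. First I would apply Theorem~\ref{t:dimimplieslip} to the compact metric spaces $A$ and $B$: since $\hdim A > \ubdim B$, it yields a compact set $A'\subseteq A$ together with a Lipschitz onto map $g\colon A'\to B$, say with Lipschitz constant $L$. The point to record is that $A'$, being compact, is a closed subset of $X$, hence a legitimate domain for the extension theorems below.

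Next I would extend $g$ to a Lipschitz map $f\colon A\to Y$ (in fact to all of $X$), splitting into the two cases. If $X$ and $Y$ are Hilbert spaces, Kirszbraun's theorem \cite{Kir} provides an extension $f\colon X\to Y$ of $g$ with the same Lipschitz constant $L$. If instead $Y=\R^n$, I would extend coordinatewise: each component $g_i\colon A'\to\R$ is $L$-Lipschitz, and the McShane formula $f_i(x)=\inf_{a\in A'}\bigl(g_i(a)+L\,d_X(x,a)\bigr)$ defines an $L$-Lipschitz extension $f_i\colon X\to\R$; then $f=(f_1,\dots,f_n)\colon X\to\R^n=Y$ is Lipschitz, with constant at most $\sqrt{n}\,L$.

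Finally, restricting $f$ to $A$ gives a Lipschitz map $A\to Y$, and since $A'\subseteq A$ and $f$ agrees with $g$ on $A'$, we obtain $f(A)\supseteq f(A')=g(A')=B$, which is precisely the claimed covering of $B$ by a Lipschitz image of $A$. I do not expect a genuine obstacle here: the substance of the corollary lies entirely in Theorem~\ref{t:dimimplieslip}, and the only thing needing a little care is that a Lipschitz extension from the closed set $A'$ to the whole space is available in exactly the two situations listed — Kirszbraun's theorem in the Hilbert-space case, and the elementary componentwise McShane extension in the $Y=\R^n$ case.
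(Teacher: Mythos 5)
Your argument is correct and is exactly the route the paper takes: apply Theorem~\ref{t:dimimplieslip} to obtain a compact $A'\subseteq A$ and a Lipschitz surjection onto $B$, then extend to $A$ via Kirszbraun's theorem in the Hilbert-space case and via coordinatewise real-valued (McShane) extension when $Y=\R^n$. No gaps.
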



From Theorem~\ref{t:dimimplieslip} we also deduce the following result, which might be
interesting in itself.
It states that any `natural' concept of dimension must 
be in between the Hausdorff and the upper box dimensions if we do not assume $\sigma$-stability, and
in between the Hausdorff and the packing dimensions
($\pdim$, see the definition in Section~\ref{s:prel}), if we require $\sigma$-stability.  
Here we state it only for dimensions defined on compact sets; in Section~\ref{s:dim} we prove a stronger statement for more general domains.

\begin{cor}\label{c:dim}
Let $n$ be a positive integer and let $D$ be a function from the family of compact subsets of $\R^n$ to $[0,n]$. Suppose that 
\begin{enumerate}[(i)] 
\item Lipschitz functions cannot increase $D$, that is, for any compact $K\su\R^n$ and Lipschitz function $f\colon K\to\R^n$ we have $D(f(K))\le D(K)$, 
\item  $D$ is monotone, that is, $A\subset B$ implies that $D(A)\le D(B)$, and 
\item if $S\su\R^n$ is a homogeneous self-similar set with the SSC then $D(S)=\hdim S$.
\end{enumerate}
Then for every compact set $K\su \R^n$ we have \begin{equation*} \hdim(K)\le D(K)\le \ubdim(K).
\end{equation*}
Furthermore, if we also require that
$D$ is $\sigma$-stable, that is,
\begin{enumerate}[(i)]
\setcounter{enumi}{3}
\item 
$D(\cup_{i=1}^{\infty} K_i)=\sup_{i} D(K_i)$ whenever
$\cup_{i=1}^{\infty} K_i$ and all $K_i$ are compact, 
\end{enumerate}
then for every compact set $K\su \R^n$ we have
\begin{equation*}
\hdim(K)\le D(K)\le \pdim(K).
\end{equation*}
\end{cor}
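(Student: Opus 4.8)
The plan is to derive the corollary directly from Theorem~\ref{t:dimimplieslip} together with well-known properties of the three dimensions involved. For the lower bound $\hdim(K) \le D(K)$, I would argue by contradiction: suppose $D(K) < \hdim(K)$ for some compact $K \su \R^n$. Pick a rational (or merely real) $s$ with $D(K) < s < \hdim(K)$. The key is to find a homogeneous self-similar set $S$ with the SSC such that $\hdim S = s$ (for instance a suitable Cantor-type construction in one coordinate of $\R^n$, rescaled to lie inside a bounded region; such sets are standard and their Hausdorff dimension can be prescribed to be any value in $(0,1)$, in fact in $(0,n)$ by taking products). Then $\hdim K > s = \hdim S = \ubdim S$, since self-similar sets satisfy $\hdim S = \ubdim S$; wait — more directly, I want $\hdim K > \ubdim S$, and since $\ubdim S = \hdim S = s < \hdim K$ this holds. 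By Theorem~\ref{t:dimimplieslip} there is a compact $K' \su K$ and a Lipschitz onto map $f \colon K' \to S$. By monotonicity (ii), $D(K') \le D(K) < s$. But the Lipschitz map $f$ cannot increase $D$ by (i), hmm — we need $f$ defined on a compact subset of $\R^n$ mapping into $\R^n$, which it is, so $D(S) = D(f(K')) \le D(K') \le D(K) < s$. On the other hand (iii) gives $D(S) = \hdim S = s$, a contradiction. Hence $\hdim(K) \le D(K)$.

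For the upper bound $D(K) \le \ubdim(K)$ without $\sigma$-stability, I would again argue by contradiction: assume $D(K) > \ubdim(K)$, and choose $s$ with $\ubdim(K) < s < D(K)$. Take a homogeneous self-similar set $S$ with the SSC and $\hdim S = s$. Then $\hdim S = s > \ubdim K$, so Theorem~\ref{t:dimimplieslip} applied with $A = S$, $B = K$ yields a compact $S' \su S$ and a Lipschitz onto map $g \colon S' \to K$. Now $D(S') \le D(S) = \hdim S = s$ by monotonicity (ii) and (iii), and since $g$ is Lipschitz, $D(K) = D(g(S')) \le D(S') \le s < D(K)$, a contradiction. (One should check $S$ can be realized as a subset of $\R^n$ and $g$ as a Lipschitz map $S' \to \R^n$, which is fine.) Therefore $D(K) \le \ubdim(K)$, and combined with the lower bound we get $\hdim(K) \le D(K) \le \ubdim(K)$ for every compact $K \su \R^n$.

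For the $\sigma$-stable case, I would improve the upper bound to $D(K) \le \pdim(K)$ using the standard characterization $\pdim(K) = \inf\{\sup_i \ubdim(K_i) : K = \bigcup_{i=1}^\infty K_i\}$ (valid for, e.g., compact $K$, with the $K_i$ taken compact). Given $\eps > 0$, write $K = \bigcup_i K_i$ with each $K_i$ compact and $\ubdim(K_i) \le \pdim(K) + \eps$; by the already-established non-$\sigma$-stable bound applied to each $K_i$ we get $D(K_i) \le \ubdim(K_i) \le \pdim(K) + \eps$, and then $\sigma$-stability (iv) gives $D(K) = \sup_i D(K_i) \le \pdim(K) + \eps$. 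Letting $\eps \to 0$ yields $D(K) \le \pdim(K)$; the lower bound $\hdim(K) \le D(K)$ is unchanged.

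The main obstacle — really the only point requiring care — is the existence of homogeneous self-similar sets with the SSC of every prescribed Hausdorff dimension $s \in (0, n)$, sitting inside $\R^n$, so that hypothesis (iii) can be invoked at the right dimension; this is classical (one-dimensional Cantor sets with ratio $r$ give $\hdim = \log 2 / \log(1/r)$, covering $(0,1)$, and Cartesian products of such sets give every value in $(0,n)$ while remaining homogeneous self-similar with the SSC), but the argument should state it explicitly. A secondary point is making sure the Lipschitz maps produced by Theorem~\ref{t:dimimplieslip}, whose targets are abstract metric spaces, can be viewed as Lipschitz maps between subsets of $\R^n$ so that hypothesis (i) applies — this is immediate once $S$ is fixed as a concrete subset of $\R^n$. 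I should also be slightly careful that $\ubdim S = \hdim S$ for self-similar sets with the SSC (indeed with the weaker open set condition), which is standard and can be cited.
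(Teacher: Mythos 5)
Your proposal is correct and follows essentially the same route as the paper's proof of the stronger Theorem~\ref{t:dim}: both bounds are obtained by contradiction via a homogeneous self-similar set $S$ with the SSC of intermediate dimension (using $\hdim S=\ubdim S$) together with Theorem~\ref{t:dimimplieslip}, and the packing-dimension bound follows from the $\sigma$-stable characterization of $\pdim$ by countable decompositions into (closures of) bounded sets. The only cosmetic difference is that for the upper bound the paper invokes Corollary~\ref{c:dimwithextension} to get a Lipschitz map defined on all of $S$, whereas you use Theorem~\ref{t:dimimplieslip} on a compact subset $S'\su S$ and then monotonicity, which works just as well.
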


The above result sheds some light on why the intermediate dimensions introduced by Falconer, Fraser and Kempton \cite{FFK} and the generalized intermediate dimensions introduced by Banaji \cite{Ban} span between the Hausdorff and upper box dimensions, and their $\sigma$-stable versions \cite{DS} are between the Hausdorff and packing dimensions.

Note that there are other fractal dimensions, such as the Assouad dimension, which do not lie between the Hausdorff and the upper box dimensions. However, this does not contradict Corollary~\ref{c:dim},
since these dimensions do not satisfy all the assumptions; for example, Lipschitz maps can increase the Assouad dimension.

\subsection{H\"older images of compact metric spaces}
In order to prove Theorem~\ref{t:dimimplieslip} we need to consider H\"older images of $[0,1]$. 

\begin{question}
For given $\alpha>0$ which compact metric spaces $B$ can be obtained as an $\alpha$-H\"older image of a compact subset of $[0,1]$? 
\end{question}

This is basically the H\"older version of the analyst's traveling salesman problem, for which M.~Badger, L.~Naples, and V.~Vellis \cite{BNV} found a deep sufficient condition if $\alpha<1$ and $B$ is a subset of a Euclidean or Hilbert space. We provide a different type of characterization in Theorem~\ref{t:delta1}.

\begin{defi}\label{d:delta}
Let $(X,d)$ be a metric space
and let $n$ be a positive integer. 
By $\Sym(n)$ we denote the set of
$\{1,\ldots,n\}\to\{1,\ldots,n\}$ bijections.
For $x_1,\ldots,x_n\in X$ and $s>0$ define 
\begin{equation*}
Z^s(x_1,\ldots,x_n)=\max\left\{\sum_{j=1}^{k-1} (d(x_{i_j},x_{i_{j+1}}))^s: 1=i_1<\ldots<i_k=n\right\},
\end{equation*}
\begin{equation*}
\delta^s(X)=\sup\left\{\min_{\pi\in \Sym(n)} Z^s(x_{\pi(1)},\dots,x_{\pi(n)}):~x_1,\dots,x_n \in X,\,n\geq 1\right\}.
\end{equation*}
\end{defi} 

\begin{theorem}
\label{t:delta1}
Let $X$ be a compact metric space and $s>0$. 
Then there exist a compact set $D\subset [0,1]$ and a $1/s$-H\"older onto map $g\colon D\to X$ if and only if $\delta^{s}(X)<\infty$.
\end{theorem}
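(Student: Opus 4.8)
The plan is to prove the two implications directly; the non-trivial direction is a passage from finite configurations of points to a \sh map defined on all of $X$.

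For the \emph{only if} direction, suppose $g\colon D\to X$ is \sh and onto with some constant $C$, where $D\subset[0,1]$ is compact. Given $x_1,\dots,x_n\in X$, I would choose preimages $t_i\in D$ with $g(t_i)=x_i$ and take $\pi\in\Sym(n)$ with $t_{\pi(1)}\le\dots\le t_{\pi(n)}$. For any $1=i_1<\dots<i_k=n$, since $d(x_{\pi(i_l)},x_{\pi(i_{l+1})})^s\le C^s|t_{\pi(i_l)}-t_{\pi(i_{l+1})}|$ and the numbers $t_{\pi(i)}$ are non-decreasing, the sum $\sum_{l=1}^{k-1} d(x_{\pi(i_l)},x_{\pi(i_{l+1})})^s$ telescopes to at most $C^s\bigl(t_{\pi(n)}-t_{\pi(1)}\bigr)\le C^s\diam D\le C^s$. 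Hence $Z^s(x_{\pi(1)},\dots,x_{\pi(n)})\le C^s$, and therefore $\delta^s(X)\le C^s<\infty$.

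For the \emph{if} direction, set $M=\delta^s(X)$ and assume $M>0$ (otherwise $X$ is a single point). The key point is a monotonicity property of $Z^s$ along a well-ordered tuple: if $x_1,\dots,x_n\in X$ is ordered so that $Z^s(x_1,\dots,x_n)\le M$ — which is possible by the definition of $\delta^s$ — then $\phi(j):=Z^s(x_1,\dots,x_j)$ satisfies $\phi(1)=0$, $\phi(n)\le M$, and $\phi(j)-\phi(i)\ge d(x_i,x_j)^s$ whenever $i<j$; the last inequality follows by appending the index $j$ to an optimal increasing subsequence realizing $\phi(i)$. Consequently $t_j:=\phi(j)/M$ lies in $[0,1]$, and $t_j\mapsto x_j$ is a well-defined \sh map with constant $M^{1/s}$ from a finite subset of $[0,1]$ onto $\{x_1,\dots,x_n\}$ (if $t_i=t_j$ then $d(x_i,x_j)=0$). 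Fixing a countable dense set $\{y_1,y_2,\dots\}\subset X$ and running this construction on an optimal reordering of $y_1,\dots,y_n$ for each $n$, I obtain finite sets $D_n\subset[0,1]$ and \sh surjections $g_n\colon D_n\to\{y_1,\dots,y_n\}$ with the \emph{same} Hölder constant $M^{1/s}$.

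The final and hardest step is to pass to a limit. I would regard the graph $G_n=\{(t,g_n(t)):t\in D_n\}$ as a compact subset of the compact space $[0,1]\times X$ and pass to a subsequence $G_{n_k}$ converging in the Hausdorff metric to a compact set $G$. The relation ``$d(x,x')^s\le M|t-t'|$ for all $(t,x),(t',x')$ in the set'' is preserved under Hausdorff limits — approximate two points of $G$ by points of $G_{n_k}$ and let $k\to\infty$ — so $G$ is the graph of a \sh map $g$ with constant $M^{1/s}$ on the compact set $D=\proj_{[0,1]}(G)$. For surjectivity, fix $j$; for all large $k$ there is $t_k\in D_{n_k}$ with $g_{n_k}(t_k)=y_j$, and a convergent subsequence of the $t_k$ yields $(t,y_j)\in G$, so $g(D)$ is a closed set containing the dense set $\{y_j\}$, hence $g(D)=X$. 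The main obstacle is exactly this finite-to-infinite assembly — producing one \sh surjection onto all of $X$ from the finitely supported pieces while keeping the domain compact — and I expect the Hausdorff-limit-of-graphs device to be the cleanest tool, since it delivers compactness of the domain, the Hölder estimate, and (via density) surjectivity simultaneously; the degenerate cases $M=0$, coincident parameters $t_i=t_j$, and repeated values among the $x_i$ need only routine care.
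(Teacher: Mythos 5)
Your proposal is correct and follows essentially the same route as the paper: the only-if direction by sorting preimages and telescoping, and the if direction by parametrizing a well-ordered finite tuple via $a_i=Z^s(x_1,\ldots,x_i)$ (your $\phi$), whose key monotonicity property $Z^s(x_1,\ldots,x_j)-Z^s(x_1,\ldots,x_i)\ge (d(x_i,x_j))^s$ is exactly the paper's Lemma~\ref{l:Z}, followed by a compactness passage from finite subsets to all of $X$. The only cosmetic difference is in that last step: the paper isolates it as Lemma~\ref{l:finitetocompact} and proves it by a diagonal subsequence argument on the parameter points over a countable dense set, whereas you take a Hausdorff limit of the graphs in $[0,1]\times X$; the two devices are interchangeable.
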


\begin{remark}
It is well known that if $\alpha>1$ then every $\alpha$-H\"older function $f\colon [0,1]\to \R^n$ is constant. Indeed, on one hand $f([0,1])$ is connected, on the other hand its Hausdorff dimension is at most $1/\alpha<1$. This happens only because $[0,1]$ is connected, which is illustrated by the following example. Let $0<\beta<\gamma<1$ be arbitrary and let $C_{\beta}$ and $C_{\gamma}$ be the homogeneous self-similar Cantor sets of dimensions $\beta$ and $\gamma$, respectively. Then it is easy to see that the natural bijection $\varphi \colon C_{\gamma}\to C_{\beta}$ is $\gamma/\beta$-H\"older. 
\end{remark}

The next theorem connects $\delta^s$ to the upper box dimension. 

\begin{theorem}
\label{t:delta2}
If $X$ is a compact metric space with $\ubdim X<s$, then $\delta^s (X)<\infty$.

Moreover, for any compact metric space $X$ we have
\begin{equation*}
\ubdim X = \inf\{s>0\colon \delta^s(X)<\infty\}.
\end{equation*}
\end{theorem}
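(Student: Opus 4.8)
The plan is to prove the two halves separately and then combine them. For the first statement, suppose $\ubdim X < s$. Pick $t$ with $\ubdim X < t < s$. By the definition of upper box dimension there is a constant $C$ so that for every $\eps>0$ the space $X$ can be covered by at most $C\eps^{-t}$ sets of diameter at most $\eps$; equivalently, any $\eps$-separated subset of $X$ has at most $C\eps^{-t}$ points. Fix a finite set $x_1,\ldots,x_n\in X$; we must exhibit a permutation $\pi$ with $Z^s(x_{\pi(1)},\ldots,x_{\pi(n)})$ bounded independently of $n$. Since $Z^s$ is the maximum of $s$-th-power path-length sums over increasing subsequences, it suffices to bound $\sum_{j=1}^{k-1} d(y_j,y_{j+1})^s$ for every subsequence $y_1,\ldots,y_k$ of $x_{\pi(1)},\ldots,x_{\pi(n)}$. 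The natural choice is to order the points so that consecutive points are close — for instance a greedy/nearest-neighbour ordering, or better, to group the points into dyadic scales: for each integer $m\ge 0$ let $S_m$ be a maximal $2^{-m}$-separated subset of $\{x_1,\ldots,x_n\}$, so $S_0\subset S_1\subset\cdots$ eventually equals the whole set, with $|S_m|\le C 2^{mt}$, and order the points consistently with this nested sequence of nets. Then in any increasing subsequence, the number of consecutive pairs whose distance lies in $(2^{-m-1},2^{-m}]$ is controlled by $|S_{m+1}|\lesssim 2^{mt}$, contributing at most $C 2^{mt}\cdot 2^{-ms}=C 2^{-m(s-t)}$ to the $s$-power sum; summing the geometric series over $m\ge 0$ gives a bound depending only on $C,s,t$ and $\diam X$, hence $\delta^s(X)<\infty$.

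For the second statement, write $s^\ast=\inf\{s:\delta^s(X)<\infty\}$. The inequality $s^\ast\le\ubdim X$ is immediate from the first part (for every $s>\ubdim X$ we have $\delta^s(X)<\infty$). For the reverse inequality $\ubdim X\le s^\ast$, I would show that $\delta^s(X)<\infty$ forces $\ubdim X\le s$, which gives $\ubdim X\le s^\ast$ by taking the infimum. So fix $s$ with $\delta^s(X)=:M<\infty$ and fix $\eps>0$; let $N=N(\eps)$ be the maximal cardinality of an $\eps$-separated set $\{x_1,\ldots,x_N\}\subset X$. For any ordering $x_{\pi(1)},\ldots,x_{\pi(N)}$, the whole sequence is itself an increasing subsequence of length $N$ with all consecutive distances $>\eps$ (the points are pairwise $\eps$-separated), so $Z^s(x_{\pi(1)},\ldots,x_{\pi(N)})\ge (N-1)\eps^s$. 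Taking the minimum over $\pi$ and then the supremum in the definition of $\delta^s$ yields $(N-1)\eps^s\le M$, i.e. $N(\eps)\le 1+M\eps^{-s}$. Since $N(\eps)$ is, up to constants, the $\eps$-covering number of $X$, this gives $\ubdim X=\limsup_{\eps\to 0}\frac{\log N(\eps)}{-\log\eps}\le s$, as required.

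The routine points — comparing $\eps$-separated sets with $\eps$-coverings, the exact form of the box-dimension inequalities, and the geometric summation — I would relegate to short lemmas or to the preliminaries. The one step that needs genuine care, and which I expect to be the main obstacle, is the construction of the good ordering in the first half: one must produce a single permutation of $x_1,\ldots,x_n$ that is simultaneously "locally short at every scale", so that the $s$-power sum along \emph{every} increasing subsequence — not just along the full sequence — is uniformly bounded. The nested-nets construction above should do this, but verifying that an increasing subsequence cannot accumulate too many medium-length jumps at a given scale (this is really a counting argument about how a subsequence interacts with the sets $S_m\setminus S_{m-1}$) is the crux, and is precisely where the exponent $t>\ubdim X$ is spent to beat the factor $2^{-ms}$.
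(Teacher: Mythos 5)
Your outline follows essentially the same route as the paper: nested nets at geometric scales, an ordering of the points compatible with the resulting tree, a per-scale count of consecutive pairs multiplied by $2^{-ms}$, and a geometric series beaten by choosing $t$ strictly between $\ubdim X$ and $s$; your second half (an $\eps$-separated set of size $N$ forces $\delta^s(X)\ge (N-1)\eps^s$ under \emph{every} permutation, hence $N(X,\eps)\lesssim 1+\delta^s(X)\eps^{-s}$) is exactly the paper's argument, just written out. The one step you flag as the crux is indeed the only nontrivial point, and here is how the paper closes it: each point of $X$ is assigned an infinite branch $(h_0(x),h_1(x),\dots)$ through the tree of nets, the permutation is the lexicographic order $\prec$ induced by arbitrary orderings of each level $H_m$, and the consecutive pairs $(y_j,y_{j+1})$ of a $\prec$-increasing sequence are partitioned by the level $m$ at which their branches first diverge --- not, as in your sketch, by the size of $d(y_j,y_{j+1})$. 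With that partition the count is immediate: the map $y_j\mapsto h_m(y_j)$ is strictly increasing from the level-$m$ pairs into $(H_m,<_m)$, hence injective, so there are at most $|H_m|$ such pairs, and each has $d(y_j,y_{j+1})\le 2u^{m-1}/(1-u)$ because both points lie within the geometric tail of their common level-$(m-1)$ ancestor. Your distance-based partition can be repaired (a pair at distance about $2^{-m}$ must diverge at level $\le m+O(1)$, so its count is $\sum_{n\le m+O(1)}|H_n|\lesssim 2^{mt}$), but the divergence-level partition is cleaner and is what makes the bound hold for \emph{every} increasing subsequence simultaneously, since any subsequence of a $\prec$-increasing sequence is again $\prec$-increasing. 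Note also that the paper extracts from this argument the explicit quantitative bound $\delta^s(X)\le\bigl(\tfrac{2\diam X}{u(1-u)}\bigr)^s\sum_n N(X,u^n)u^{ns}$, which is reused later (Corollary~\ref{c:Cantor}); your version yields only finiteness, which suffices for the theorem as stated.
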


In fact, we prove both Theorems~\ref{t:delta1} and \ref{t:delta2} in a slightly stronger, quantitative form in Section~\ref{s:holder}. Combining Theorems~\ref{t:delta1} and \ref{t:delta2} immediately gives the following corollary.

\begin{cor} \label{c:Holder}
If $\al>0$ and $B$ is a compact metric space with $\ubdim B<1/\alpha$ then $B$ can be obtained as the $\alpha$-H\"older image of a compact subset of $[0,1]$. 
\end{cor}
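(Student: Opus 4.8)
The plan is to chain the two preceding theorems with the substitution $s:=1/\alpha$, so that no genuinely new argument is required. First I would observe that the hypothesis $\ubdim B<1/\alpha$ is exactly $\ubdim B<s$ for this choice of $s$. Applying the first assertion of Theorem~\ref{t:delta2} to $X=B$ with this $s$ then yields $\delta^{s}(B)<\infty$.

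Next I would feed this into Theorem~\ref{t:delta1}, again with $X=B$ and the same $s$. Since $\delta^{s}(B)<\infty$, the ``if'' direction of that theorem produces a compact set $D\subset[0,1]$ together with a \sh onto map $g\colon D\to B$. Because $1/s=\alpha$, the map $g$ is an \ah surjection from the compact subset $D$ of $[0,1]$ onto $B$, which is precisely the statement of the corollary.

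There is essentially no obstacle beyond this bookkeeping; the one point worth double-checking is the matching of exponents — the map furnished by Theorem~\ref{t:delta1} is $1/s$-H\"older, and the choice $s=1/\alpha$ is exactly what makes $1/s$ equal to $\alpha$ — and that the quantitative refinements announced after Theorems~\ref{t:delta1} and \ref{t:delta2} are not needed here, the plain statements sufficing. Alternatively one could route the argument through the ``moreover'' clause of Theorem~\ref{t:delta2}: from $1/\alpha>\ubdim B=\inf\{s>0\colon\delta^{s}(B)<\infty\}$ pick some $s<1/\alpha$ with $\delta^{s}(B)<\infty$, obtain from Theorem~\ref{t:delta1} a $1/s$-H\"older onto map $g\colon D\to B$, and note that since $D$ is bounded and $1/s>\alpha$, the map $g$ is automatically \ah as well. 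The direct route with $s=1/\alpha$ is the cleaner of the two, so I would present that one.
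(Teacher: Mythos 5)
Your proposal is correct and is exactly the paper's argument: the paper derives Corollary~\ref{c:Holder} by combining Theorem~\ref{t:delta2} (with $s=1/\alpha$, giving $\delta^{1/\alpha}(B)<\infty$) with the ``if'' direction of Theorem~\ref{t:delta1}. The exponent bookkeeping and the remark that the quantitative refinements are not needed are both accurate.
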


For the special case when $B\subset \RR^m$ and $\alpha\leq 1$, Corollary~\ref{c:Holder} is essentially known: it follows easily
from \cite[Theorem~2.3]{BV} or the more general \cite[Theorem~1.1]{BNV}. 

By an extension theorem of Minty \cite[Theorem~1~(ii)]{Min}, if $\alpha\le 1$, $D\subset [0,1]$ and $H$ is a Hilbert space then any $\alpha$-H\"older map $g\colon D\to H$ can be extended to $[0,1]$ as an $\alpha$-H\"older map.
Thus Corollary~\ref{c:Holder} has the
following direct consequence.


\begin{cor} If $B$ is a compact subset of a Hilbert space and $\ubdim B<1/\alpha$ and $\alpha\leq 1$ then $B$ can be covered by an $\alpha$-H\"older image of $[0,1]$.
\end{cor}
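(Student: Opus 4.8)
The plan is to combine Corollary~\ref{c:Holder} with Minty's extension theorem in the essentially mechanical way hinted at in the paragraph preceding the statement. First I would apply Corollary~\ref{c:Holder}: since $B$, being a compact subset of a Hilbert space, is a compact metric space with $\ubdim B<1/\alpha$, there exist a compact set $D\subset[0,1]$ and an \ah onto map $g\colon D\to B$. (In the language of Theorem~\ref{t:delta1} this is a \sh map with $s=1/\alpha$; for the present purpose I only need that $g$ is \ah with \emph{some} finite Hölder constant.)

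Second, I would invoke the extension theorem of Minty \cite[Theorem~1~(ii)]{Min}, whose hypotheses are met verbatim: $\alpha\le 1$, the domain $D$ is a subset of $[0,1]\subset\R$, and the target is a Hilbert space $H$, namely the one containing $B$. Hence $g$ extends to an \ah map $\widetilde{g}\colon[0,1]\to H$; the resulting Hölder constant is immaterial for the statement.

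Finally, since $g$ maps onto $B$ and $\widetilde{g}$ extends $g$, we obtain $B=g(D)=\widetilde{g}(D)\subset\widetilde{g}([0,1])$, so $B$ is covered by the \ah image $\widetilde{g}([0,1])$ of $[0,1]$, as claimed.

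There is no genuine obstacle here: all the substance lies in Theorems~\ref{t:delta1} and \ref{t:delta2} (hence in Corollary~\ref{c:Holder}) and in Minty's theorem. The only points needing care are bookkeeping ones: that ``$\ubdim B<1/\alpha$'' is exactly the hypothesis Corollary~\ref{c:Holder} requires, that the Hölder exponents match up ($\alpha$ versus $1/s$ with $s=1/\alpha$), and that the assumption $\alpha\le 1$ enters \emph{only} through Minty's extension step, Corollary~\ref{c:Holder} itself being valid for every $\alpha>0$. If one wanted merely a cover of $B$ by an \ah image of some compact subset of $[0,1]$, rather than of all of $[0,1]$, the extension step could be omitted altogether.
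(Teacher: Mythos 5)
Your proposal is correct and follows exactly the paper's route: apply Corollary~\ref{c:Holder} to obtain a compact $D\subset[0,1]$ and an \ah onto map $g\colon D\to B$, then use Minty's extension theorem (which is where $\alpha\le 1$ and the Hilbert-space hypothesis enter) to extend $g$ to an \ah map on all of $[0,1]$, whose image covers $B$. The bookkeeping remarks you add (exponent matching, where $\alpha\le1$ is used) are accurate and consistent with the paper's one-line derivation.
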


\subsection{Self-similar sets with the strong separation condition}

Let $A\su\R^n$ and $B\su\R^m$ be self-similar sets with the strong separation condition (SSC).
It is well known (see e.g. in \cite{Fa}) that for such sets all dimensions agree, including Hausdorff dimension and upper box dimension.
Since Lipschitz maps cannot increase Hausdorff dimension, $A$ cannot be mapped onto $B$ by a Lipschitz map if $\dim A< \dim B$.
On the other hand, by a special case of Corollary~\ref{c:dimwithextension},  $A$ can be mapped onto $B$ by a Lipschitz map if $\dim A>\dim B$. 
For this special case 
Deng, Weng, Xiong and Xi \cite{DWXX} proved a stronger result by showing that in this case $\dim A>\dim B$ implies that there is a bilipschitz embedding $f\colon B\to A$. 
(Note that by Lemma~\ref{l:extension}, the inverse of $f$ can be extended to $A$, so 
the existence of a bilipschitz embedding $f\colon B\to A$ indeed implies that $A$ can be mapped onto $B$ by a Lipschitz map.) 

So it remains to study the $\dim A=\dim B$ case.
In \cite{DWXX} Deng, Weng, Xiong and Xi also proved that if $A$ and $B$ are self-similar sets with the SSC and $\dim A=\dim B$ then $B$ can be bilipschitz embedded to $A$ if and only if $A$ and $B$ are bilipschitz equivalent.

There is a vast literature establishing conditions under which two metric spaces, or more specifically two self-similar sets are bilipschitz equivalent, see e.g \cite{FM, LL, X, XX} and the references therein. 
In 1992 Falconer and Marsh~\cite{FM} found necessary algebraic conditions for two self-similar sets with the SSC to be bilipschitz equivalent. In 2010 Xi~\cite{X} gave a necessary and sufficient condition.
For the case when one of the sets is the middle third Cantor set $C$, the Falconer-Marsh necessary condition is also sufficient and it is very simple: 
a self-similar set $B$ with the SSC and with $\dim B=\dim C$ is bilipschitz equivalent to $C$ if and only if all similarity ratios of $B$ are integer powers of $3$.

We prove that this condition is necessary even to have a Lipschitz onto map from $C$ onto $B$. 
Therefore we can answer a special case of Question~\ref{q:intro}: a self-similar set $B$ with the SSC and $\dim B=\dim C$ can be obtained as a Lipschitz image of the Cantor set $C$ if and only if 
$B$ and $C$ are bilipschitz equivalent.
In Section~\ref{s:ssc} we prove this not only for the Cantor set but also 
for any homogeneous self-similar set with the SSC:

\begin{theorem}
\label{t:ssc}
Let $A$ and $B$ be self-similar sets with the SSC such that $A$ is homogeneous. Suppose that $\hdim A=\hdim B$. Then $A$ can be mapped onto $B$ by a Lipschitz map 
if and only if $A$ and $B$ are bilipschitz equivalent.
\end{theorem}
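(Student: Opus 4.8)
The ``only if'' direction is trivial: a bilipschitz homeomorphism $A\to B$ is a Lipschitz surjection. So suppose $f\colon A\to B$ is a Lipschitz surjection with constant $L$; I must build a bilipschitz equivalence. Since $\hdim A=\hdim B$, the result of Deng, Weng, Xiong and Xi recalled above reduces this to constructing a bilipschitz embedding of $B$ into $A$. Normalise $\diam A=\diam B=1$, write $A=\bigcup_{i=1}^m\varphi_i(A)$ with each $\varphi_i$ of ratio $\rho$, so $s:=\hdim A=\hdim B=\log m/\log(1/\rho)$ and, for each $k$, $A$ is the disjoint union of $m^k$ congruent level-$k$ cylinders of diameter $\rho^k$, pairwise at distance $\ge\gamma\rho^k$ by the SSC. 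Let $t_1,\dots,t_n$ be the ratios of $B$ (so $\sum_j t_j^s=1$); for $r>0$ the ``$r$-tiles'' $S_v(B)$ with $v$ minimal subject to $t_v\le r$ partition $B$, each has diameter in $(\,(\min_j t_j)\,r,\,r\,]$, any two are at distance $\ge\gamma' r$, and there are $\asymp r^{-s}$ of them.

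The core is a multi-scale counting estimate. For $r\asymp\rho^k$ chosen so that $L\rho^k<\gamma' r$, every level-$k$ cylinder $Q$ of $A$ has $\diam f(Q)<\gamma' r$, so $f(Q)$ lies in a single $r$-tile $\Phi(Q)$; since $f$ is onto and the tiles partition $B$, the map $Q\mapsto\Phi(Q)$ is onto the set of $r$-tiles and each tile $T$ equals $\bigcup_{\Phi(Q)=T}f(Q)$. Because $\#\{\text{level-}k\text{ cylinders}\}=m^k\asymp r^{-s}\asymp\#\{r\text{-tiles}\}$, a pigeonhole argument yields a constant $C_0$, depending only on $L,\gamma',s$ and $\min_j t_j$, such that at least half of the $r$-tiles are covered by at most $C_0$ of the sets $f(Q)$; and for such a tile $T$ an $\mathcal{H}^s$-mass estimate (using $\sum_{\Phi(Q)=T}\mathcal{H}^s(f(Q))\ge\mathcal{H}^s(T)$, $\mathcal{H}^s(f(Q))\le L^s\mathcal{H}^s(Q)$ and $\mathcal{H}^s(Q)\asymp\mathcal{H}^s(T)$) singles out one cylinder $Q$ with $f(Q)\subseteq T$, $\diam Q\asymp\diam T$ and $\mathcal{H}^s(f(Q))\ge c_1\mathcal{H}^s(T)$ for a fixed $c_1>0$. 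Renormalising $f|_Q$ by the natural similarities $A\to Q$ and $T\to B$ produces a Lipschitz map $A\to B$, with constant bounded in terms of $L$ alone, whose image carries at least $c_1\mathcal{H}^s(B)$ of the measure of $B$; here homogeneity of $A$ is essential, as it makes all these renormalised restrictions members of the single metric space of Lipschitz maps $A\to B$.

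Iterating the zooming inside a nested tower of ``good'' $r$-tiles --- a Borel--Cantelli / percolation argument on the cylinder tree of $A$, combined with the self-similarity of $B$ --- and passing to a limit via Arzel\`a--Ascoli, one forces the tile sizes of $B$ to be compatible with the arithmetic progression $\{\rho^k:k\ge 0\}$ of cylinder sizes of $A$; concretely, the similarity ratios of $B$ are shown to satisfy the Falconer--Marsh algebraic relations relative to $\rho$ and $m$ (for the middle-third Cantor set this says exactly that all ratios of $B$ are integer powers of $3$). Feeding this into the Falconer--Marsh/Xi characterisation of bilipschitz equivalence of self-similar sets with the SSC --- equivalently, re-bracketing the cylinder tree of $B$ so as to match that of the homogeneous set $A$ --- yields a bilipschitz equivalence between $A$ and $B$.

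The step I expect to be the main obstacle is this last one: converting the averaged, up-to-constants control coming from the counting estimate into exact commensurability of the ratios of $B$ with $\rho$. The difficulty is that $f$ is only assumed Lipschitz and onto and may be badly non-injective --- it need not be the bilipschitz equivalence we seek, nor even bilipschitz on any cylinder --- so one cannot simply invert it; the counting must instead be fed back into itself along a self-similar tower, and it is precisely the homogeneity of $A$, which keeps the renormalised maps in one function space and pins the admissible scales to $\{\rho^k\}$, that rules out irrational and rational non-compatible exponents.
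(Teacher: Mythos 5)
Your reduction of the problem is in the right spirit --- the paper also proves the theorem by showing that a Lipschitz surjection $A\to B$ forces the Falconer--Marsh condition (all similarity ratios of $B$ are integer powers of $\root k\of r$, where $r$ is the ratio of $A$, $q$ the number of its pieces and $k$ is maximal with $\root k\of q\in\Z$), and then observes that for a homogeneous $A$ this condition easily gives bilipschitz equivalence by re-bracketing the cylinder tree. Your multi-scale counting (cylinders of $A$ land in single tiles of $B$ at comparable scales, with mass and cardinality estimates) is also a reasonable first step. But the step you yourself flag as ``the main obstacle'' is exactly the content of the theorem, and your proposal does not contain an argument for it. All your estimates are up-to-constants: they produce, at each scale, cylinders $Q$ of $A$ and tiles $T$ of $B$ with $\diam Q/\diam T$ bounded above and below, and renormalised Lipschitz maps whose images carry a fixed fraction of the measure. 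If some exponent $\alpha_j$ (defined by $\beta_j=r^{\alpha_j}$) were irrational, the logarithms of the tile diameters would be dense modulo $\log(1/r)$, which is perfectly consistent with all such two-sided bounds; so no amount of iterating bounded-ratio information can yield exact commensurability. The ``Borel--Cantelli / percolation argument on the cylinder tree'' is not specified and, as stated, has nothing to bite on.

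What the paper does instead is replace the approximate counting by an exact, integer-valued invariant: $F(X,B)$, the minimal number of Lipschitz-$1$ images of $X$ needed to cover $B$. Exact decompositions ($r^{\floor t}A$ is a disjoint union of $q^{\floor{t+\alpha_j}-\floor t}$ copies of $r^{\floor{t+\alpha_j}}A$, and $B$ splits into the well-separated pieces $g_j(B)$) yield the precise functional inequality $z(t)\ge\sum_j q^{-\alpha_j}z(t+\alpha_j)$ for the $1$-periodic function $z(t)=F(r^{-\{t\}}A,B)\,q^{\{t\}}$, whose hypothesis $\sum_j q^{-\alpha_j}=1$ comes from $\hdim A=\hdim B$. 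Lower semicontinuity of $x\mapsto F(xA,B)$ (an Arzel\`a--Ascoli argument) gives a minimum, and the equality case of the inequality at the minimum, combined with the fact that $z$ is $q^{\{t\}}$ times a nonzero integer-valued function (hence cannot be constant on a dense set), forces every $\alpha_j$ to be rational; a separate algebraic lemma (about $q^{r_1}+\dots+q^{r_m}=1$ with $q$ not a perfect power) then upgrades rational to integer. None of these ingredients --- the discrete invariant $F$, the exact subharmonicity relation, the semicontinuity/minimum argument, or the algebraic lemma --- appears in your proposal, so the proof has a genuine gap at its central point. (Two smaller issues: the Deng--Weng--Xiong--Xi reduction to a bilipschitz embedding of $B$ into $A$ is announced but never used, and the case where $q$ is a perfect power, which changes the admissible ratios from powers of $r$ to powers of $\root k\of r$, is not addressed.)
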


\subsection{Structure of the paper}
The paper is organized as follows.
Section~\ref{s:prel} contains some basic definitions and results we use later. 
We prove Theorems~\ref{t:delta1} and \ref{t:delta2} in somewhat stronger forms in Section~\ref{s:holder},
Theorem~\ref{t:dimimplieslip} and some related results in Section~\ref{s:genlip},
Lemma~\ref{l:extension} in Section~\ref{s:extension}, 
Corollary~\ref{c:dim} in a stronger form in Section~\ref{s:dim} 
and Theorem~\ref{t:ssc} in a stronger form in Section~\ref{s:ssc}.
Finally, in Section~\ref{s:questions} we pose some questions.

\section{Preliminaries}\label{s:prel}

Let $(X,d)$ be a metric space. For $A,B \subset X$ let $\dist(A,B) = \inf\{d(x,y) : x\in A,~y\in B\}$. For $x\in X$ and $r>0$ we denote by $\B(x,r)$ the closed ball of radius $r$ centered at $x$, and by $N(X,r)$ the minimal number
of closed balls of radius $r$ that cover $X$. 
The \emph{upper box dimension} of a bounded set $X\su\R^n$ is defined as
\begin{equation*} 
\ubdim X=\limsup_{r \to 0+} \frac{\log N(X,r)}{\log (1/r)}.
\end{equation*}
The \emph{packing dimension} of a set $X\su\R^n$ can be defined as
the $\sigma$-stable modification of upper box dimension:
\begin{equation}\label{e:pdim}
    \pdim X=\inf\left\{\sup_i \ubdim X_i \colon X=\cup_{i=1}^\infty X_i, X_i \textrm{ is bounded}\right\}.
\end{equation}
For more on these dimensions and for the concepts of the \emph{Hausdorff dimension} $\dim_H$ and \emph{$s$-dimensional Hausdorff measure} $\iH^s$ see e.g.~the books \cite{Fa} and \cite{Ma}. 

A metric space $(X,d)$ is called \emph{ultrametric} if the triangle inequality is replaced with the stronger inequality
\begin{equation*} 
d(x,y)\leq \max\{d(x,z),d(y,z)\} \quad  \textrm{for all } x,y,z\in X.
\end{equation*}

This is equivalent to the property that if $a\leq b\leq c$ are sides of a triangle in $X$, then $b=c$. 
The following useful fact follows easily from the definition.

\begin{fff} \label{f:u} Let $(X,d)$ be an ultrametric space. For all $x,y\in X$ and $r>0$ either $\B(x,r)\cap \B(y,r)=\emptyset$ or $\B(x,r)=\B(y,r)$.
\end{fff}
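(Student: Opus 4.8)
The plan is to prove the dichotomy in its natural form: I will show that whenever the two closed balls $\B(x,r)$ and $\B(y,r)$ share at least one point, they must in fact coincide. Since for fixed $x,y,r$ the two alternatives (the balls are disjoint, or they have a common point) are exhaustive and mutually exclusive, establishing ``common point implies equal'' is exactly what is needed.

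So suppose $z\in \B(x,r)\cap\B(y,r)$, that is, $d(x,z)\le r$ and $d(y,z)\le r$. By the symmetry of the roles of $x$ and $y$ it suffices to prove the single inclusion $\B(x,r)\subseteq \B(y,r)$; interchanging $x$ and $y$ then yields the reverse inclusion and hence equality. To prove the inclusion I would take an arbitrary $w\in\B(x,r)$, so $d(x,w)\le r$, and estimate $d(y,w)$ by chaining the strengthened triangle inequality twice. First, $d(z,w)\le\max\{d(z,x),d(x,w)\}\le r$, using $d(z,x)\le r$ and $d(x,w)\le r$. Then $d(y,w)\le\max\{d(y,z),d(z,w)\}\le r$, using $d(y,z)\le r$ together with the bound just obtained. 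Thus $w\in\B(y,r)$, which gives $\B(x,r)\subseteq\B(y,r)$ and completes the argument.

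Since the whole proof reduces to two applications of the ultrametric inequality, there is essentially no obstacle; the only point requiring a little care is to obtain \emph{both} inclusions, which the symmetry remark handles at once. The statement is precisely the ultrametric version of the familiar slogan that ``every point of a ball is a center,'' and the computation above makes that slogan quantitative.
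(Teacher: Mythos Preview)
Your proof is correct. The paper does not actually give a proof of this fact; it simply states that it ``follows easily from the definition,'' and your argument---taking a common point $z$, then applying the ultrametric inequality twice to show each ball is contained in the other---is exactly the standard verification the authors had in mind.
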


Suppose that $K\subset \RR^n$ is a self-similar set with contracting similarity maps $\{f_i\}_{1\leq i\leq m}$; that is $K=\cup_{i=1}^m f_i(K)$.
We say that $K$ satisfies the \emph{strong separation condition} (SSC) if $f_i(K)\cap f_j(K)=\emptyset$ for all $1\leq i< j\leq m$. We say that $K$ is \emph{homogeneous} if the similarity ratios
of all the similarity maps $f_i$ are the same.


\bigskip

Let $(X,d_X)$ and $(Y,d_Y)$ be metric spaces and $f\colon X\to Y$. We say that $f$ is \emph{$\alpha$-H\"older} if there exists a finite constant $C$ such that for all $x,z\in X$ we have 
\begin{equation*} d_Y(f(x),f(z))\leq C d_X(x,z)^{\al}. 
\end{equation*} 
If we can take $C=1$ in the above equation, then $f$ is called \emph{$\alpha$-$1$-H\"older}. The $1$-H\"older functions are also called \emph{Lipschitz} functions, and the $1$-$1$-H\"older functions are called \emph{Lipschitz-$1$} functions.

The metric spaces $X$ and $Y$ are said to be \emph{bilipschitz equivalent} if there exists a bijection $f\colon X\to Y$ such that both $f$ and its
inverse are Lipschitz.
\bigskip 

A subset $A$ of a separable complete metric space $X$ is called \emph{analytic} if it can be obtained as a continuous image of a complete separable metric space $Y$. 
It is well known that all Borel sets are analytic, see e.g.~\cite{Kec}.

\bigskip

For completeness we provide a proof for the following useful fact.

\begin{lemma}\label{sscbilipultra}
    Let $K\su\R^n$ be a self-similar set obtained from the similarity maps
    $f_1,\ldots,f_m$ with similarity ratios $r_1,\ldots,r_m$, respectively. 
    If $K$ satisfies the strong separation condition then it is bilipshitz equivalent to an ultrametric space $(Y,d)$, where $(Y,d)$ depends only on the similarity ratios $r_1,\ldots,r_m$.
\end{lemma}

\begin{proof}
%
  Let $Y=\{1,\ldots,m\}^{\N}$ with the metric $d((i_1, i_2, \ldots), (j_1, j_2, \ldots))=r_{i_1}\cdot\ldots\cdot r_{i_k}$ whenever $i_1=j_1, \ldots, i_k=j_k$ but $i_{k+1}\neq j_{k+1}$. 
  It is easy to check that $(Y,d)$ is an ultrametric space and it clearly depends only on $r_1,\ldots,r_m$. 
  It is also easy to check that the function 
  $f((i_1,i_2,\ldots))=\cap_{k=1}^\infty f_{i_1}\circ\ldots\circ f_{i_k}(K)$ is
  well defined and gives a bilipschitz equivalence between $(Y,d)$ and $K$.
\end{proof}

\section{H\"older images of compact subsets of the real line}
\label{s:holder}

The goal of this section is to prove Theorems~\ref{t:delta1} and \ref{t:delta2}. First we need two lemmas. 
The proof of the first one is a standard compactness argument.

\begin{lemma}\label{l:finitetocompact}
Let $(X,d)$ be a compact metric space and $\al, \ell>0$. 
Then there exist a compact set $D\subset [0,\ell]$ and an $\alpha$-$1$-H\"older onto map $g\colon D\to X$ if and only if for any finite subset $V\su X$ there exists a finite
set $W\su [0,\ell]$ and an $\alpha$-$1$-H\"older onto map
$h\colon W\to V$. 
\end{lemma}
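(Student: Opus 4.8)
The plan is to prove both implications of Lemma~\ref{l:finitetocompact} separately, with the forward direction being essentially trivial and the backward direction requiring a diagonal/compactness argument.

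First I would dispatch the ``only if'' direction. Suppose $D\subset[0,\ell]$ is compact and $g\colon D\to X$ is an onto \aoneh map. Given a finite $V=\{v_1,\dots,v_k\}\subset X$, simply pick for each $v_i$ a preimage $w_i\in D$ with $g(w_i)=v_i$, set $W=\{w_1,\dots,w_k\}$ and let $h=g\res W$. Then $h\colon W\to V$ is onto and \aoneh since $g$ is, so this direction needs no work.

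For the ``if'' direction, the idea is to build the desired $D$ and $g$ as a limit of the finite data. I would fix a countable dense subset $\{x_1,x_2,\dots\}$ of $X$ (using compactness, hence separability), and for each $n$ apply the hypothesis to $V_n=\{x_1,\dots,x_n\}$ to get a finite $W_n\subset[0,\ell]$ and an \aoneh onto map $h_n\colon W_n\to V_n$. For each $n$ choose $w_i^{(n)}\in W_n$ with $h_n(w_i^{(n)})=x_i$ for $i\le n$; this gives, for each fixed $i$, a sequence $(w_i^{(n)})_{n\ge i}$ in the compact interval $[0,\ell]$. By a diagonal argument I extract a subsequence of indices $n$ along which $w_i^{(n)}\to w_i\in[0,\ell]$ for every $i$ simultaneously. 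The \aoneh inequality $|h_n(w_i^{(n)})-h_n(w_j^{(n)})|^{?}$ — more precisely $d(x_i,x_j)\le |w_i^{(n)}-w_j^{(n)}|^{\al}$ wait, I must be careful about the direction: \aoneh means $d(h_n(w),h_n(w'))\le |w-w'|^{\al}$, i.e. $d(x_i,x_j)\le |w_i^{(n)}-w_j^{(n)}|^\al$ for all $i,j\le n$. Passing to the limit along the chosen subsequence yields $d(x_i,x_j)\le |w_i-w_j|^\al$ for all $i,j$. Hence the map $x_i\mapsto w_i$ from the dense set $\{x_i\}$ into $[0,\ell]$ has a well-defined inverse relation that is \aoneh in the right direction; in fact the assignment $\phi\colon w_i\mapsto x_i$ (on the countable set $\{w_i\}$, which may have repetitions collapsing) satisfies $d(\phi(w_i),\phi(w_j))\le|w_i-w_j|^\al$, so $\phi$ is uniformly continuous. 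I extend $\phi$ continuously to $D_0:=\cl\{w_i : i\ge 1\}\subset[0,\ell]$, a compact set, obtaining an \aoneh map $g\colon D_0\to X$; its image is compact and contains the dense set $\{x_i\}$, hence equals $X$. Setting $D=D_0$ finishes the proof.

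The main obstacle is making the diagonal extraction and the limit passage clean: one must ensure the inequality $d(x_i,x_j)\le|w_i^{(n)}-w_j^{(n)}|^\al$ survives the limit (it does, since both sides are continuous in the $w$'s and the inequality is closed), and one must check that distinct dense points $x_i\ne x_j$ cannot be forced to the same limit $w_i=w_j$ — but that is automatic, since $d(x_i,x_j)\le|w_i-w_j|^\al=0$ would contradict $x_i\ne x_j$, so $\phi$ is even injective on $\{w_i\}$, though injectivity is not needed. The only genuinely delicate point is that the continuous extension of $\phi$ from $\{w_i\}$ to its closure $D_0$ preserves the \aoneh constant $1$; this follows because the inequality $d(\phi(w),\phi(w'))\le|w-w'|^\al$ is preserved under taking limits of $w,w'$ within $D_0$, using continuity of $\phi$ and of $(w,w')\mapsto|w-w'|^\al$. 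I would remark that one could alternatively phrase the whole argument via an ultralimit or via the Arzelà--Ascoli-type compactness of $1$-Lipschitz maps, but the explicit diagonal argument is the most self-contained.
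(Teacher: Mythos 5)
Your proposal is correct and follows essentially the same route as the paper's proof: a dense sequence, the hypothesis applied to its initial segments, a diagonal/nested-subsequence extraction of limit points in $[0,\ell]$, passage of the \aoneh inequality to the limit, and continuous extension to the closure followed by the density-plus-compactness argument for surjectivity. Your extra remark on well-definedness of the limit map when preimages collapse is a point the paper leaves implicit, but it is handled the same way by the limiting H\"older inequality.
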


\begin{proof}
One implication is clear, we prove the other one.
Let $B=\{b_1,b_2,\ldots\}$ be a dense subset of $X$.
By the assumption of the lemma for every $n$ there exist
$W_n=\{t_{n,1},\ldots t_{n,n}\}\su [0,\ell]$ and an $\alpha$-$1$-H\"older
$h_n\colon W_n\to X$ such that $h_n(t_{n,i})=b_i$ 
for every $i=1,\ldots,n$.
By taking convergent subsequences,
we can get a nested sequence of infinite index sets
$\N\supset I_1\supset I_2\supset\ldots$ such that for every
$k$ the subsequence $(t_{n,k})_{n\in I_k}$ converges.
Let $a_k$ be the limit and let $g(a_k)=b_k$.

We claim that $g$ is $\alpha$-$1$-H\"older on $A=\{a_1,a_2,\ldots\}$.
As $h_n$ is $\alpha$-$1$-H\"older, we have $d(b_i,b_j)\le |t_{n,i}-t_{n,j}|^\al$ for all $n$.
Fix $i<j$. Since $I_j\su I_i$ we get that 
$(t_{n,i})_{n\in I_j}\to a_i$ and $(t_{n,j})_{n\in I_j}\to a_j$. Therefore, $d(g(a_i),g(a_j))=d(b_i,b_j)\le |a_i-a_j|^\al$, so $g$ is indeed
$\alpha$-$1$-H\"older on $A$.

Let $D$ be the closure of $\{a_1,a_2,\ldots\}$.
Then $D\su[0,\ell]$ is compact and $g$ clearly extends to $D$ as an $\alpha$-$1$-H\"older function.
Since the compact set $g(D)$ contains the dense set $B$, we have $g(D)=X$, that is, $g\colon D\to X$ is onto.
\end{proof}

\begin{lemma}\label{l:Z} For any metric space $(X,d)$, $1\le i<j$, $x_1,\ldots,x_j\in X$ and $s>0$
we have the following inequalities 
(recall Definition~\ref{d:delta}):
\begin{equation}\label{e:Z1}
    Z^s(x_1,\ldots,x_i)+Z^s(x_i,\ldots,x_j)\le Z^s(x_1,\ldots,x_j),
\end{equation}
 \begin{equation}\label{e:Z2}
    Z^s(x_1,\ldots,x_j)-Z^s(x_1,\ldots,x_i)\ge Z^s(x_i,\ldots,x_j)\ge (d(x_i,x_j))^s.
\end{equation}   
\end{lemma}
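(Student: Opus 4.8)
The plan is to unwind the definition of $Z^s$ and exploit the fact that any chain realizing the maximum can be split at the index $i$. First I would observe that $Z^s(x_1,\ldots,x_j)$ is the maximum of $\sum_{\ell} (d(x_{i_\ell},x_{i_{\ell+1}}))^s$ over increasing chains $1=i_1<\cdots<i_k=j$. Given an optimal chain $C_1$ for $Z^s(x_1,\ldots,x_i)$ (so it starts at $1$ and ends at $i$) and an optimal chain $C_2$ for $Z^s(x_i,\ldots,x_j)$ (starting at $i$, ending at $j$), their concatenation $C_1 \ast C_2$ is an admissible chain from $1$ to $j$ through $i$, whose weight is exactly $Z^s(x_1,\ldots,x_i)+Z^s(x_i,\ldots,x_j)$. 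Since this concatenated chain is one of the competitors in the maximum defining $Z^s(x_1,\ldots,x_j)$, inequality \eqref{e:Z1} follows immediately.

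For \eqref{e:Z2}, the first inequality is just \eqref{e:Z1} rearranged: $Z^s(x_1,\ldots,x_j) - Z^s(x_1,\ldots,x_i) \ge Z^s(x_i,\ldots,x_j)$. (One should note $Z^s(x_1,\ldots,x_i)$ is finite since $X$ is a metric space and the chain is finite, so the subtraction is legitimate; if $i=1$ the term $Z^s(x_1)$ should be read as $0$, the empty sum.) The second inequality, $Z^s(x_i,\ldots,x_j) \ge (d(x_i,x_j))^s$, holds because the two-element chain $i_1 = i < i_2 = j$ (i.e.\ $k=2$) is itself an admissible competitor in the maximum defining $Z^s(x_i,\ldots,x_j)$, and its weight is precisely $(d(x_i,x_j))^s$; hence the maximum is at least this value. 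All summands are nonnegative, so no cancellation issues arise.

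There is essentially no obstacle here: the only thing to be careful about is the bookkeeping with degenerate chains (single-point lists giving the empty sum, which equals $0$) and making sure the indices in the concatenation genuinely interleave as a strictly increasing sequence — which they do, since the first chain ends at $i$ and the second begins at $i$, and we identify these endpoints rather than listing $i$ twice. I would state \eqref{e:Z1} first, prove it by the concatenation argument, then derive both parts of \eqref{e:Z2} as one-line consequences.
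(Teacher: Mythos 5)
Your proposal is correct and matches the paper's argument, which simply states that \eqref{e:Z1} and the second part of \eqref{e:Z2} follow from the definition (via the concatenation of chains and the two-element chain, respectively) and that the first part of \eqref{e:Z2} follows from \eqref{e:Z1}. Your write-up just makes these one-line observations explicit, including the correct handling of the degenerate single-point chain.
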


\begin{proof}
The inequality \eqref{e:Z1} and the second part of \eqref{e:Z2} follows from the definition, the first part of \eqref{e:Z2} clearly follows from \eqref{e:Z1}.
\end{proof}

Now we can prove Theorem~\ref{t:delta1}.
In fact, we prove a bit more.

\begin{theorem}\label{t:strongerdelta1}
Let $X$ be a compact metric space and $s>0$. Then there exist a compact set $D\subset [0,1]$ and a $1/s$-H\"older onto map $g \colon D\to X$ if and only if $\delta^{s}(X)<\infty$.

Moreover, if $\delta^{s}(X)<\infty$, then
\begin{equation*}
   \delta^{s}(X)=\min\{\ell>0 : \exists  D\subset [0,\ell] \text{ compact and }
   g\colon D\to X \text{ $(1/s)$-$1$-H\"older onto}\}.
\end{equation*}
\end{theorem}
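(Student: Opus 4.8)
The plan is to reduce everything to the finitary statement provided by Lemma~\ref{l:finitetocompact} (with $\al=1/s$, so that \aoneh for parameter $1/s$ means exactly \soneh). By that lemma, the existence of a compact $D\su[0,\ell]$ and a \soneh onto map $g\colon D\to X$ is equivalent to the assertion that every finite $V\su X$ admits a finite $W\su[0,\ell]$ and a \soneh onto map $h\colon W\to V$. So the whole theorem follows once I establish the following finite statement: for a finite metric space $V=\{x_1,\dots,x_n\}$, the minimal $\ell$ for which such an $h$ exists is exactly $\min_{\pi\in\Sym(n)} Z^s(x_{\pi(1)},\dots,x_{\pi(n)})$, raised to... no: here is the subtlety to get right. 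A \soneh map $h\colon W\to V$ satisfies $d(h(u),h(v))^s\le |u-v|$ for all $u,v\in W$, since $(1/s)$-Hölder-$1$ means $d(h(u),h(v))\le|u-v|^{1/s}$, equivalently $d(h(u),h(v))^s\le|u-v|$. That is precisely why the exponent $s$ sits inside $Z^s$.

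The heart of the matter is thus the finite combinatorial claim: \emph{for any $n$ points $x_1,\dots,x_n$ in a metric space, the least $\ell$ such that there is a finite $W\su[0,\ell]$ and a \soneh onto map $W\to\{x_1,\dots,x_n\}$ equals $\min_{\pi\in\Sym(n)}Z^s(x_{\pi(1)},\dots,x_{\pi(n)})$.} For the lower bound ($\ell\ge$ this min): given any such $h\colon W\to V$, list the points of $W$ in increasing order $t_1<\dots<t_m$ (with $m\ge n$); the composition of $h$ with this ordering picks out, in particular, the preimages of $x_1,\dots,x_n$ in some order, say $\pi$. Using inequality \eqref{e:Z1} of Lemma~\ref{l:Z} telescopically along consecutive points of $W$, together with $d(h(t_k),h(t_{k+1}))^s\le t_{k+1}-t_k$, one shows $Z^s(h(t_1),\dots,h(t_m))\le t_m-t_1\le\ell$; and restricting to the subsequence hitting all of $V$ once, $Z^s$ only decreases... actually one must argue $Z^s$ of the full list dominates $Z^s$ of the sublist that realizes the permutation $\pi$ — but $Z^s$ is defined as a max over increasing subsequences, so $Z^s(x_{\pi(1)},\dots,x_{\pi(n)})\le Z^s(h(t_1),\dots,h(t_m))$ automatically once the $x_{\pi(i)}$ appear as a subsequence of the $h(t_k)$. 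Hence $\ell\ge\min_\pi Z^s(x_{\pi(1)},\dots,x_{\pi(n)})$. For the upper bound, fix the optimal permutation $\pi$ and place the points greedily: set $t_1=0$ and, having placed $t_1<\dots<t_k$ carrying $x_{\pi(1)},\dots,x_{\pi(k)}$, put $t_{k+1}=t_k+\bigl(Z^s(x_{\pi(1)},\dots,x_{\pi(k+1)})-Z^s(x_{\pi(1)},\dots,x_{\pi(k)})\bigr)$. By \eqref{e:Z2} this increment is $\ge d(x_{\pi(k)},x_{\pi(k+1)})^s>0$, so the $t_k$ are strictly increasing, $t_n=Z^s(x_{\pi(1)},\dots,x_{\pi(n)})\le\ell$, and one checks (again via \eqref{e:Z1}) that for any $i<j$, $t_j-t_i\ge Z^s(x_{\pi(i)},\dots,x_{\pi(j)})\ge d(x_{\pi(i)},x_{\pi(j)})^s$, so the map $t_k\mapsto x_{\pi(k)}$ is \soneh.

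Finally I assemble the pieces. If $\delta^s(X)<\infty$, set $\ell=\delta^s(X)$. For every finite $V\su X$, the finite claim gives a \soneh onto map from a finite subset of $[0,\min_\pi Z^s\bigl]\su[0,\ell]$ onto $V$; Lemma~\ref{l:finitetocompact} then produces the desired compact $D\su[0,\ell]$ and \soneh onto $g\colon D\to X$, proving the ``only if'' direction with the quantitative bound $\ell\le\delta^s(X)$. Conversely, if such a $D\su[0,\ell]$ and $g$ exist, then restricting $g$ to finitely many points and applying the lower bound in the finite claim shows $\min_\pi Z^s(x_{\pi(1)},\dots,x_{\pi(n)})\le\ell$ for every finite tuple in $X$, whence $\delta^s(X)\le\ell<\infty$; taking the infimum over admissible $\ell$ (and noting the value $\ell=\delta^s(X)$ is actually attained by the first part) gives the displayed equality with $\min$. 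The main obstacle I anticipate is the careful bookkeeping in the lower bound — ensuring that passing from an arbitrary \soneh map on a finite $W$ to one realizing a genuine permutation of $V$ does not lose anything, which is exactly where the ``max over increasing subsequences'' shape of $Z^s$ and the superadditivity \eqref{e:Z1} must be combined cleanly; the upper bound is essentially the explicit greedy construction above.
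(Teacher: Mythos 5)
Your proposal is correct and takes essentially the same route as the paper: both directions reduce to finite subsets via Lemma~\ref{l:finitetocompact}, the map is built by placing $x_{\pi(i)}$ at the cumulative value $Z^s(x_{\pi(1)},\dots,x_{\pi(i)})$ and checked with \eqref{e:Z2}, and the converse uses sorted preimages with a telescoping sum. (One cosmetic remark: the bound $Z^s(h(t_1),\dots,h(t_m))\le t_m-t_1$ follows directly by bounding each chain $\sum_j d(h(t_{i_j}),h(t_{i_{j+1}}))^s\le\sum_j(t_{i_{j+1}}-t_{i_j})$, rather than from the superadditivity \eqref{e:Z1}, which points the wrong way.)
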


\begin{proof}
First we prove that the existence of a compact set $D\su [0,\ell]$ and a $(1/s)$-$1$-H\"older onto map $g \colon D\to X$ implies
that $\delta^{s}(X)\le \ell$.    
Let $x_1,\ldots,x_n\in X$ be arbitrary and pick $t_i\in g^{-1}(\{x_i\})$.
There exists a permutation $\pi\in \Sym(n)$ such that $t_{\pi(1)}\le\ldots\le t_{\pi(n)}$. 
Since $g$ is $(1/s)$-$1$-H\"older, we obtain 
\begin{equation*} 
d(x_{\pi(i)},x_{\pi(i')})\le |t_{\pi(i)}-t_{\pi(i')}|^{1/s}
\end{equation*} 
for any $i, i'$.
Thus for any $1=i_1<\ldots<i_k=n$ we have 
\begin{equation*}
\sum_{j=1}^{k-1} (d(x_{\pi(i_j)},x_{\pi(i_{j+1})}))^{s}\le
\sum_{j=1}^{k-1} |t_{\pi(i_j)}-t_{\pi(i_{j+1})}|\le \ell.
\end{equation*}
Therefore $Z^{s}(x_{\pi(1)},\ldots,x_{\pi(n)})\le \ell$, which implies that $\delta^{s}(X)\le \ell$.

Finally, we show that if $\delta^{s}(X)<\infty$ then there
exist a compact set $D\subset [0,\delta^{s}(X)]$ and a
$(1/s)$-$1$-H\"older onto map $g \colon D\to X$.
Let $V=\{x_1,\ldots,x_n\}\su X$ be an arbitrary finite subset of $X$ enumerated such that 
$Z^{s}(x_1,\ldots,x_n)\le \delta^{s}(X)$.
For each $i=1,\ldots,n$ define $a_i=Z^{s}(x_1,\ldots,x_i)$, 
$h(a_i)=x_i$ and $W=\{a_1,\ldots,a_n\}$.
Clearly, $W\su[0,\delta^{s}(X)]$ is finite, and by \eqref{e:Z2} of Lemma~\ref{l:Z} we obtain that $h\colon W\to V$ is a
$(1/s)$-$1$-H\"older onto map. 
Since $V\su X$ was an arbitrary finite subset, by Lemma~\ref{l:finitetocompact}
the proof is complete.
\end{proof}

The following theorem clearly contains Theorem~\ref{t:delta2}. For the notation $N(X,r)$ recall Section~\ref{s:prel}.

\begin{theorem}\label{t:strongerdelta2}
For any compact metric space $(X,d)$ the following statements hold. 
\begin{enumerate}
\item\label{i:deltasandN}
$\displaystyle{
\delta^s(X)\le \left(\frac{2\diam X}{u(1-u)}\right)^s\sum_{n=1}^\infty N(X,u^{n}) \cdot u^{ns} \quad \text{for all } s>0 \text{ and } 0<u<1;\displaystyle}$ \bigskip
\item \label{i:finitedelta}
If $s>\ubdim X$ then $\delta^s(X)<\infty$; \bigskip
\item \label{i:bdimdelta}
$\displaystyle{
\ubdim X = \inf\{s>0\colon \delta^s(X)<\infty\};
\displaystyle}$ \bigskip 
\item \label{i:bdimholder}
$\displaystyle{
\ubdim X = \inf\{s>0\colon \exists D\subset[0,1] \text{ and } 1/s\text{-H\"older onto } g\colon D\to X\}.
\displaystyle}$
\end{enumerate}

\end{theorem}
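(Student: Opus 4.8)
The four statements are layered, so the plan is to prove the covering-number bound \eqref{i:deltasandN} and deduce the rest from it together with Theorem~\ref{t:strongerdelta1}. Statement \eqref{i:bdimholder} is immediate once \eqref{i:bdimdelta} is known: rescaling the real line shows that a compact $D\subseteq[0,1]$ carrying a $1/s$-H\"older onto map to $X$ with constant $C$ exists exactly when a compact $D'\subseteq[0,C^s]$ carrying a $(1/s)$-$1$-H\"older onto map exists, i.e.\ exactly when $\delta^s(X)\le C^s$, so the infimum in \eqref{i:bdimholder} equals $\inf\{s:\delta^s(X)<\infty\}$. Next, \eqref{i:bdimdelta} is the conjunction of two inequalities. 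The inequality $\inf\{s:\delta^s(X)<\infty\}\le\ubdim X$ follows at once from \eqref{i:finitedelta}, and \eqref{i:finitedelta} itself follows from \eqref{i:deltasandN} by taking, say, $u=\tfrac12$ and using that $\ubdim X<s$ forces $N(X,2^{-n})\le C\,2^{nd'}$ for some $d'<s$, making the series geometric. For the reverse inequality $\ubdim X\le\inf\{s:\delta^s(X)<\infty\}$ I would use the elementary lower bound $\delta^s(X)\ge(N(X,r)-1)\,r^s$, valid for every $r>0$: a maximal $r$-separated set $\{y_1,\dots,y_k\}\subseteq X$ is an $r$-net, so $k\ge N(X,r)$, and for any permutation $\pi$ the value $Z^s(y_{\pi(1)},\dots,y_{\pi(k)})$ is already at least $\sum_{i=1}^{k-1}d(y_{\pi(i)},y_{\pi(i+1)})^s\ge(k-1)r^s$. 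If $s<\ubdim X$, then $N(X,r_j)\,r_j^s\to\infty$ along a suitable sequence $r_j\to0$, whence $\delta^s(X)=\infty$.

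So the whole weight is on \eqref{i:deltasandN}. Since $\delta^s(X)$ is a supremum over finite subsets of $X$, it is enough to produce, for an arbitrary finite $V=\{x_1,\dots,x_m\}\subseteq X$, an ordering of $V$ whose $Z^s$ is at most the stated bound. The plan is to build a hierarchical \emph{net tree} on $V$: fix an increasing sequence $E_1\subseteq E_2\subseteq\cdots$ of maximal $2u^n$-separated subsets of $V$ (so $E_n=V$ for $n$ large). A closed ball of radius $u^n$ has diameter $\le 2u^n$ and so meets $E_n$ in at most one point, whence $|E_n|\le N(X,u^n)$; maximality makes $E_n$ a $2u^n$-net, so every point of $E_{n+1}$ has a \emph{parent} in $E_n$ at distance $\le 2u^n$. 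Adjoining an artificial root above $E_1$ yields a rooted tree whose depth-$n$ nodes are the points of $E_n$; fixing an arbitrary order of the children of each node, let $\pi$ be the induced depth-first order on the leaves $V$.

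It remains to bound $Z^s(x_{\pi(1)},\dots,x_{\pi(m)})$. Given an increasing subsequence, attach to each of its consecutive pairs $(w,w')$ the depth $n$ of the lowest common ancestor of $w$ and $w'$. Telescoping the parent estimates shows that every leaf lies within $\tfrac{2u^n}{1-u}$ of its depth-$n$ ancestor, so $d(w,w')\le\tfrac{4u^n}{1-u}$ when the common ancestor has depth $n\ge1$, and trivially $d(w,w')\le\diam X$ when it is the root. For the count: because the subsequence is increasing and the tree is ordered depth-first, among the pairs whose common ancestor is a fixed depth-$n$ node $v$ the index of the used child of $v$ strictly increases from pair to pair, so there are at most $(\#\{\text{children of }v\})-1$ of them; summing over the depth-$n$ nodes gives at most $|E_{n+1}|-|E_n|\le N(X,u^{n+1})$ pairs with common ancestor at depth $n$. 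Hence $Z^s\le(\diam X)^sN(X,u)+\sum_{n\ge1}N(X,u^{n+1})\bigl(\tfrac{4u^n}{1-u}\bigr)^s$, and after factoring out the constant and reindexing this is at most $\bigl(\tfrac{C\diam X}{u(1-u)}\bigr)^s\sum_{n\ge1}N(X,u^n)u^{ns}$ for a constant $C$ (a sharper choice of the net separations, together with the usual normalization of the diameter, brings $C$ down to $2$, and the $\diam X$ factor absorbs the coarsest-scale term). The hard part is precisely this last step: proving the depth-first counting lemma, which is what keeps each scale's contribution proportional to $N(X,u^{n+1})$ rather than to a product of cover numbers over all coarser scales, and then matching up the scales so that the final expression takes exactly the claimed shape.
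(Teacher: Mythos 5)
Your argument is essentially the paper's: a hierarchical tree of scales $u^n$, a depth-first (lexicographic) order, a count of the consecutive splitting pairs at level $n$ by the number of level-$n$ nodes, and a geometric-series bound on the corresponding distances; your reductions of (2)--(4) to (1) and your lower bound $\delta^s(X)\ge (N(X,r)-1)r^s$ also match what the paper does. The genuine differences are in implementation: you build a finite tree on an arbitrary finite subset $V$ out of maximal $2u^n$-separated nets $E_1\subseteq E_2\subseteq\cdots$ inside $V$, whereas the paper builds one infinite tree on the centers of optimal covers of all of $X$ by $u^n$-balls and orders all of $X$ at once (which costs it a K\"onig's lemma step that your finite version avoids). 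The price of your choice is quantitative: with $2u^n$-separation the parent distances are only $\le 2u^n$, so a pair splitting at depth $n$ is bounded by $4u^n/(1-u)$ rather than $2u^n/(1-u)$, and after reindexing you obtain the stated inequality with $4$ in place of $2$. Your remark that ``a sharper choice of the net separations'' brings the constant down to $2$ is not substantiated and is not obviously available for separated nets (tightening the separation to $u^n$ improves the distance bound but worsens the cardinality bound to $N(X,u^n/2)$); the paper gets $2$ by taking the $N(X,u^n)$ centers of an actual cover, for which the parent distance is $\le u^n$ simply because the level-$n$ balls cover $X$. Since only finiteness of the right-hand side of (1) is used in (2)--(4) and in the later applications, this discrepancy affects nothing downstream, but as written your proof establishes (1) only up to this constant.
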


\begin{proof}
It is easy to see that, by the definition of the upper box dimension, \eqref{i:deltasandN} implies \eqref{i:finitedelta}.
It is clear that \eqref{i:finitedelta} implies
`$\ge$' in \eqref{i:bdimdelta}.
The other inequality in \eqref{i:bdimdelta}
follows from the observation that $\ubdim X>s$ implies that there exists $\eps$-net
of large cardinality in $X$, which clearly
implies that $\delta^s(X)$ is large.
By Theorem~\ref{t:strongerdelta1}, statements \eqref{i:bdimdelta} and \eqref{i:bdimholder} are clearly equivalent.
Therefore it remains to prove \eqref{i:deltasandN}.


By rescaling $X$, we can suppose that $\diam X=1$.
Let $a_n= N(X,u^n)$ and let $H_n$ be the set of centers of a collection of $a_n$ balls of radius $u^n$ that covers $X$. Since $\diam X= 1$, we have $a_0=1$.

We define an infinite tree such that for each $n\in \N$ the vertices of the $n$th level are the points of $H_n$, and let us endow $H_n$ with an arbitrary ordering $(H_n, <_n)$. Since $\cup_{h\in H_n} \B(h,u^n)=X$, for every $h'\in H_{n+1}$ there exists an $h\in H_n$ with $d(h,h')\le u^n$. Join $h'$ by an edge to exactly one such $h\in H_n$. Let $T$ be the tree we obtained. Consider the infinite branches of $T$. By the compactness of $X$ and since $u<1$ the vertices of every infinite branch $(h_n)_{n\geq 0}$ of $T$ converge to a point of $X$.

We claim that the converse also holds: for any $x\in X$ there exists an infinite branch $(h_n(x))_{n\geq 0}$ of $T$ that converges to $x$. Indeed, let $W_x=\cup_{n=0}^\infty\{h\in H_n \colon x\in \B(h,u^n)\}$ and let $V_x$ consists of $W_x$ and all the ancestors of all points of $W_x$. Then the points of $V_x$ form a subtree of $T$ with arbitrarily long branches. Since every degree of $T$ is finite, by K\"onig's lemma, this implies that the subtree $V_x$ has an infinite branch $(h_n(x))_{n\geq 0}$. Let $y$ be the limit of $h_n(x)$. Since by definition $h_n(x)\in W_x$ for infinitely many $n$, this implies that $y=x$, which completes the proof of the claim.

Using the orderings $(H_n, <_n)$,
the lexicographic ordering gives an order $\prec$ on $X$: let $x\prec y$ if for some $n\geq 1$ we have $h_0(x)=h_0(y),\ldots,h_{n-1}(x)=h_{n-1}(y)$ and $h_n(x)<_n h_n(y)$. Let $y_1,\ldots,y_k\in X$ be arbitrarily fixed.  
To prove \eqref{i:deltasandN}, and so also the theorem,
it is enough to show that
\begin{equation}\label{e:enough}
y_1\prec \ldots \prec y_k \Longrightarrow
\sum_{j=1}^{k-1} (d(y_j,y_{j+1}))^s \le\left(\frac{2}{u(1-u)}\right)^s \sum_{n=1}^\infty a_n\cdot u^{ns}.  
\end{equation} 
Assume $y_1 \prec \ldots \prec y_k$. We partition $\{y_1,\ldots,y_{k-1}\}$ as follows: for all $n\geq 1$ define
\begin{equation*}
Y_n=\left\{ y_j \colon j \leq k-1,\, h_i(y_j)=h_i(y_{j+1}) \textrm{ for all }  0\leq i<n \textrm{ and } 
h_n(y_j)<h_n(y_{j+1})\right\}.
\end{equation*}
Note that $h_n\colon (Y_n,\prec)\to (H_n,<_n)$ is a strictly increasing map, so $|Y_n|\le |H_n| = a_n$. 

For each $x\in X$ we have $d(h_{i}(x),h_{i+1}(x))\le u^{i}$ and $h_i(x)\to x$ as $i\to \infty$, so for all $n\geq 1$ we obtain 
\begin{equation*} 
d(h_{n-1}(x),x)\le \sum_{i=n-1}^\infty u^i = \frac{u^{n-1}}{1-u}.
\end{equation*}

Suppose that $y_j\in Y_n$. 
Then $h_{n-1}(y_j)=h_{n-1}(y_{j+1})$ and we obtain
\begin{equation*}
d(y_j,y_{j+1})\le d(y_j,h_{n-1}(y_j))+d(y_{j+1},h_{n-1}(y_{j+1}))
\le \frac{2u^{n-1}}{1-u}.    
\end{equation*}
Therefore
\begin{align*}
\begin{split}
\sum_{j=1}^{k-1} (d(y_j,y_{j+1}))^s &=
\sum_{n=1}^\infty\sum_{y_j\in Y_n} (d(y_j,y_{j+1}))^s 
\\ 
&\le \sum_{n=1}^\infty |Y_n|\cdot 
\left(\frac{2u^{n-1}}{1-u}\right)^s \\
&=\left(\frac{2}{u(1-u)}\right)^s \sum_{n=1}^\infty a_n u^{ns},
\end{split}
\end{align*}
so \eqref{e:enough} holds. This completes the proof of the theorem.
\end{proof}

\begin{remark}\label{r:CantorasHolder}
For $X\su\R$ it is easier to calculate $\delta^s(X)$ since in this case in  Definition~\ref{d:delta} it is clear which permutation $\pi\in\Sym(n)$ gives the minimum.
For example, consider the middle third Cantor set $C$ and let $s=\log 2/ \log 3$, then one can easily see that $\delta^s(C)=\infty$.
Thus, by Theorem~\ref{t:strongerdelta1}, $C$ cannot be obtained as a $1/s$-H\"older image of a compact subset of $\R$. 
On the other hand, by Theorem~\ref{t:strongerdelta2} and Corollary~\ref{c:Holder}, for any $t>\ubdim C=s$ we have $\delta^t(C)<\infty$  and $C$ can be obtained as a $1/t$-H\"older image of a compact subset of $\R$.
\end{remark}

\section{H\"older images of compact metric spaces, Lipschitz images of the Cantor set}
\label{s:genlip}

The goal of this section is to prove Theorem~\ref{t:dimimplieslip} (for H\"older maps) and some related results.
We combine results from Section~\ref{s:holder} with arguments and results from \cite{KMZ}. First we prove the following statement, which is in fact a stronger version of Corollary~\ref{c:dimwithextension}.

\begin{prop}\label{p:ultraholder}
    Let $A$ be a compact ultrametric space
    with positive $t$-dimensional Hausdorff measure and 
    $B$ be a compact metric space with $\delta^s(B)<\infty$.
\begin{enumerate}
    \item \label{i:tpersholder}
     There exists a compact $A'\subset A$ and a $t/s$-H\"older onto map $f\colon A'\to B$.
     \item \label{i:lip}
     If $t=s$ then $A$ can be mapped onto $B$ by a Lipschitz map.
\end{enumerate}
\end{prop}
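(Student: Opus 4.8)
The plan is to factor the desired surjection onto $B$ through a compact subset of $\R$. Theorem~\ref{t:strongerdelta1} supplies the ``$B$-side'': since $\delta^s(B)<\infty$, there are a compact set $D\su[0,\delta^s(B)]$ and an \soneh onto map $g\colon D\to B$. For the ``$A$-side'' I would use a Frostman measure on $A$ together with the ultrametric structure to build a surjection from a compact subset of $A$ onto a rescaled interval containing $D$ that moves distances like $d\mapsto d^t$, in the spirit of \cite{KMZ}. Composing the two maps and reading off exponents will give \eqref{i:tpersholder}, and \eqref{i:lip} will then follow from \eqref{i:tpersholder} via the extension Lemma~\ref{l:extension}.

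In more detail, I would first dispose of the trivial case $\delta^s(B)=0$, which forces $B$ to be a single point (then $(d(x,y))^s=Z^s(x,y)=0$ for all $x,y\in B$), and otherwise set $\ell_0=\delta^s(B)\in(0,\infty)$ and fix $D$ and $g$ as above. Since $\iH^t(A)>0$, Frostman's lemma yields a nonzero finite Borel measure $\mu$ on $A$ with $\mu(\B(x,r))\le r^t$ for all $x\in A$, $r>0$; then $A_0:=\supp\mu$ is a compact ultrametric space, $m:=\mu(A_0)>0$, and $\mu$ is atomless. Assuming without loss of generality that $\diam A\le1$, for $r_n=2^{-n}$ the closed balls of radius $r_n$ partition $A_0$ into finitely many pieces (by Fact~\ref{f:u} and compactness), the level-$(n{+}1)$ partition refines the level-$n$ one, and ordering the pieces level by level (ordering the sub-pieces inside each piece) produces a total order $\prec$ on $A_0$ for which every ball of radius $r_n$ is a $\prec$-interval. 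I would then consider the distribution function $\psi\colon A_0\to[0,m]$ given by $\psi(x)=\mu(\{y\in A_0:y\prec x\})$: if $x\neq y$ and $n\ge1$ is chosen with $r_n<d(x,y)\le r_{n-1}$, then $x$ and $y$ lie in a common ball $P$ of radius $r_{n-1}$, which is a $\prec$-interval, so
\[
|\psi(x)-\psi(y)|\le\mu(P)\le r_{n-1}^{t}\le 2^{t}(d(x,y))^{t};
\]
hence $\psi$ is continuous and satisfies $|\psi(x)-\psi(y)|\le2^t(d(x,y))^t$ throughout $A_0$, and, since $\mu$ is atomless, $\psi$ has no gaps in its image and maps $A_0$ onto the whole interval $[0,m]$.

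Now I would rescale and compose. Put $\lambda=m/\ell_0$, let $D'=\lambda D\su[0,m]$, and let $g'\colon D'\to B$ be $g'(\tau)=g(\tau/\lambda)$, which is $(1/s)$-H\"older with constant $\lambda^{-1/s}$ and onto. Then $A':=\psi^{-1}(D')$ is a compact subset of $A$, the restriction $\psi\restriction A'$ is an onto map $A'\to D'$ (because $\psi$ maps $A_0$ onto $[0,m]\supseteq D'$), and $f:=g'\circ(\psi\restriction A')\colon A'\to B$ is onto and satisfies
\[
d_B(f(x),f(y))\le\lambda^{-1/s}\,|\psi(x)-\psi(y)|^{1/s}\le\lambda^{-1/s}2^{t/s}(d(x,y))^{t/s}
\]
for all $x,y\in A'$, so $f$ is $t/s$-H\"older; this proves \eqref{i:tpersholder}. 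For \eqref{i:lip}, if $t=s$ this $f$ is Lipschitz, and since $A$ is a compact ultrametric space and $B$ is complete, Lemma~\ref{l:extension} extends $f$ to a Lipschitz map $\ft\colon A\to B$ with $\ft(A)\supseteq f(A')=B$; as $\ft(A)\su B$ as well, $\ft$ is a Lipschitz map of $A$ onto $B$.

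The one genuinely delicate step is the claim that the distribution function $\psi$ is onto the entire interval $[0,m]$, and not merely onto some Cantor-like compact subset of it --- i.e.\ that atomlessness of $\mu$ really does preclude gaps in $\psi(A_0)$; this is the point at which I would rely on the corresponding argument from \cite{KMZ}. Everything else --- Frostman's lemma, the ultrametric partition-and-order construction, the power-law H\"older estimate, the rescaling, and the extension via Lemma~\ref{l:extension} --- is routine.
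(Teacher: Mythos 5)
Your proposal is correct and follows essentially the same route as the paper: factor through a compact subset of $\R$ by composing a $t$-H\"older surjection from (a subset of) $A$ onto an interval with the \soneh surjection $g\colon D\to B$ supplied by Theorem~\ref{t:strongerdelta1}, and then obtain \eqref{i:lip} from \eqref{i:tpersholder} via Lemma~\ref{l:extension}. The only difference is that the paper simply cites \cite[Theorem~2.1, Lemma~2.3]{KMZ} for the existence of a $t$-H\"older onto map $h\colon A\to[0,1]$ (taking $A'=h^{-1}(D)$), whereas you unpack that ingredient via a Frostman measure and the ultrametric distribution-function construction --- which is exactly the argument of \cite{KMZ}, including the surjectivity step you flag.
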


\begin{proof}
\eqref{i:tpersholder}: By \cite[Theorem~2.1, Lemma 2.3]{KMZ} any compact ultrametric space
with positive $t$-dimensional Hausdorff measure can be mapped onto $[0,1]$ by a $t$-H\"older function.
Hence there exists a $t$-H\"older onto map $h\colon A\to [0,1]$.
By Theorem~\ref{t:delta1}, there exist a compact $D\subset [0,1]$ and a $1/s$-H\"older onto map $g\colon D\to B$. 
Let $A'=h^{-1}(D)$. Then $A'\su A$ is clearly compact and $g\circ h\colon A'\to B$ is a $t/s$-H\"older onto map.

\eqref{i:lip}: By \eqref{i:tpersholder} there exists an $A'\subset A$ and a Lipschitz onto map $f\colon A'\to B$.
Lemma~\ref{l:extension} implies that $f$ can be extended to $A$ as a Lipschitz map.
\end{proof}

\begin{remark}
Recall that the middle third Cantor set $C$ is bilipschitz equivalent to an ultrametric space. Let $s=\log 2/\log 3$. Proposition~\ref{p:ultraholder}\eqref{i:lip} gives a sufficient condition for Question~\ref{q:intro}(b):
a compact metric space $B$ can be covered by a Lipschitz image of $C$ if $\delta^{s}(B)<\infty$.
On the other hand, as we saw in Remark~\ref{r:CantorasHolder}, $\delta^{s}(C)=\infty$, so the $B=C$ example shows that $\delta^{s}(B)<\infty$ is not a necessary condition for Question~\ref{q:intro} (b).
\end{remark}

Theorem~\ref{t:dimimplieslip} is clearly the $\alpha=1$ special case of the following theorem.

\begin{theorem}\label{t:dimimpliesholder}
 Let $A$ and $B$ be compact metric spaces such that $\hdim A > \alpha \ubdim B$ for
 some $\alpha>0$.
Then there exists a compact set $A'\su A$ and an $\alpha$-H\"older onto map $f\colon A'\to B$.   
\end{theorem}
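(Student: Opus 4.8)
The plan is to reduce to the case where $A$ is ultrametric, where Proposition~\ref{p:ultraholder} applies almost directly. The key classical input is the following fact, due to essentially the same circle of ideas as in \cite{KMZ}: every compact metric space $A$ with $\hdim A > \gamma$ contains a compact subset $A_0$ that is bilipschitz equivalent to a compact ultrametric space $U$ with $\hdim U = \hdim A_0 > \gamma$; indeed one can take $U$ to carry positive $\gamma$-dimensional Hausdorff measure. (This is a standard consequence of Frostman's lemma together with the construction of a Cantor-type subset on which the metric is comparable to an ultrametric — one builds a nested sequence of families of balls with geometrically decreasing radii, well separated at each level, using the mass distribution principle to guarantee the construction does not terminate.) So first I would fix $\gamma$ with $\alpha\,\ubdim B < \gamma < \hdim A$ and extract such an $A_0 \subset A$ with an associated ultrametric space $U$ and bilipschitz map $\phi\colon U \to A_0$, where $\HH^{\gamma}(U) > 0$.

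Next, set $s = \gamma/\alpha$, so that $s > \ubdim B$. By Theorem~\ref{t:strongerdelta2}\eqref{i:finitedelta} we have $\delta^{s}(B) < \infty$. Now apply Proposition~\ref{p:ultraholder}\eqref{i:tpersholder} with the ultrametric space $U$ (which has positive $\gamma$-dimensional Hausdorff measure, so plays the role of ``$A$'' with $t = \gamma$) and with $B$ as given: this produces a compact set $U' \subset U$ and a $\gamma/s$-H\"older, i.e. $\alpha$-H\"older, onto map $f_0\colon U' \to B$. Finally transport back to $A$: let $A' = \phi(U') \subset A_0 \subset A$, which is compact since $U'$ is compact and $\phi$ is a homeomorphism, and set $f = f_0 \circ \phi^{-1}|_{A'}\colon A' \to B$. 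Since $\phi^{-1}$ is Lipschitz on $A_0$ and $f_0$ is $\alpha$-H\"older, the composition $f$ is $\alpha$-H\"older (a Lipschitz map precomposed into an $\alpha$-H\"older map, $\alpha \le 1$ being the relevant range but the argument works for any $\alpha > 0$), and $f(A') = f_0(U') = B$, so $f$ is onto. This gives the desired compact $A' \subset A$ and $\alpha$-H\"older onto map $f\colon A' \to B$.

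The main obstacle is the first step: producing the ultrametric (Cantor-type) subspace $A_0$ with large Hausdorff dimension inside an arbitrary compact metric space of Hausdorff dimension exceeding $\gamma$. One must be careful that the notion of ``$\hdim$ large'' survives the passage to a subset — this is exactly the content of the construction in \cite{KMZ} (their Theorem~2.1 and Lemma~2.3 already combine the ultrametric reduction with a H\"older surjection onto $[0,1]$), so in practice I would invoke that machinery rather than reprove it, extracting from it the intermediate ultrametric space with positive $\gamma$-dimensional Hausdorff measure. Everything after that — choosing $s$, invoking $\delta^s(B) < \infty$, applying Proposition~\ref{p:ultraholder}, and conjugating by the bilipschitz map — is routine bookkeeping with H\"older and Lipschitz constants.
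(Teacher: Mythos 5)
Your overall architecture is exactly the paper's: pick $\gamma\in(\alpha\ubdim B,\hdim A)$, find a compact subset of $A$ of Hausdorff dimension greater than $\gamma$ that is bilipschitz equivalent to a compact ultrametric space, observe that $\delta^{\gamma/\alpha}(B)<\infty$ by Theorem~\ref{t:delta2}, and conclude with Proposition~\ref{p:ultraholder}~\eqref{i:tpersholder}. Everything from ``set $s=\gamma/\alpha$'' onward is correct, and your explicit conjugation by the bilipschitz map $\phi$ is, if anything, slightly more careful than the paper's own wording.

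The problem is the first step, which you yourself flag as ``the main obstacle'' and then dispose of incorrectly. The assertion that every compact metric space $A$ with $\hdim A>\gamma$ contains a compact subset of Hausdorff dimension exceeding $\gamma$ that is bilipschitz equivalent to an ultrametric space is \emph{not} a standard consequence of Frostman's lemma plus a well-separated Cantor-type construction, and it is \emph{not} contained in \cite{KMZ}: Theorem~2.1 and Lemma~2.3 of \cite{KMZ} go in the opposite direction (they map an ultrametric space of positive $t$-dimensional Hausdorff measure \emph{onto} $[0,1]$, which is precisely what Proposition~\ref{p:ultraholder} already uses). The extraction of an ultrametric subset of Hausdorff dimension arbitrarily close to $\hdim A$ is the main theorem of Mendel and Naor \cite{MN}, which the paper invokes at exactly this point and explicitly calls deep. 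The naive scheme you sketch (nested families of geometrically shrinking, well-separated balls controlled by the mass distribution principle) is exactly what fails in general: in a non-doubling compact metric space such constructions incur a definite loss of dimension, whereas here the subset's dimension must exceed $\gamma$, and $\gamma=\alpha\ubdim B+\eps$ may be arbitrarily close to $\hdim A$. So your proof is correct modulo this one input, but that input must be cited as the Mendel--Naor theorem rather than derived as you propose; as written, the justification of the key step is a genuine gap.
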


\begin{proof}
Let $t\in (\alpha \ubdim B, \hdim A)$.
By a deep theorem of Mendel and Naor~\cite{MN}, $\hdim A>t$ implies that there exists a compact set $A'\su A$
such that $\hdim A'>t$ and $A'$ is 
bilipschitz equivalent to an ultrametric
space. 
Since $\ubdim B<t/\alpha$, 
Theorem~\ref{t:delta2} implies that $\delta^{t/\alpha}(B)<\infty$.
Then Proposition~\ref{p:ultraholder} \eqref{i:tpersholder} implies that there is an $\alpha$-H\"older onto map $f\colon A'\to B$.   
\end{proof}





\begin{remark}
By Howroyd's theorem \cite{Ho}, if $A$ is an analytic subset of a separable complete metric space $X$ with $\hdim A>s$ for some $s$ then there exists a compact set $A'\su A$ with
$\hdim A'>s$. Thus in Theorem~\ref{t:dimimpliesholder}
(and so also in Theorem~\ref{t:dimimplieslip})
$A$ does not have to be compact, it is enough to assume that $A$ is an analytic subset of a separable complete metric space.
\end{remark}

The following result gives partial answer to 
Question~\ref{q:intro} (b).
We state the essentially trivial necessary condition \eqref{i:necessery} just to show that the obtained necessary and sufficient conditions are not very far from each other.
Note that the sufficient condition in \eqref{i:sufficient} is weaker than the condition $\ubdim B<\log 2/\log3$ in Corollary~\ref{c:dimwithextension}:
it also allows some compact metric spaces with $\ubdim B=\log 2/\log 3$.

\begin{cor}\label{c:Cantor}
    Let $C$ be the middle third Cantor set and let $B$ be an arbitrary compact metric space. 
    Let $b_n=N(B,3^{-n})$.
\begin{enumerate}
    \item \label{i:sufficient}
    If $\sum_{k=1}^\infty b_n/2^n$ converges then $C$ can be mapped onto $B$ by a Lipschitz map.
    \item \label{i:necessery}
    If $C$ can be mapped onto $B$ by a Lipschitz map then the sequence $b_n/2^n$ must be bounded.
\end{enumerate}    
\end{cor}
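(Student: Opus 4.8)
The plan is to reduce both statements to facts already established about the quantity $\delta^s$ and about H\"older/Lipschitz images, using the fact that the middle third Cantor set $C$ is bilipschitz equivalent to an ultrametric space and has positive $s$-dimensional Hausdorff measure for $s=\log 2/\log 3$.

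For part \eqref{i:sufficient}, I would apply Theorem~\ref{t:strongerdelta2}\eqref{i:deltasandN} to $B$ with the choice $u=1/3$ and exponent $s=\log 2/\log 3$, so that $u^{ns}=2^{-n}$. This gives
\begin{equation*}
\delta^s(B)\le \left(\frac{2\diam B}{u(1-u)}\right)^s\sum_{n=1}^\infty N(B,3^{-n})\cdot 2^{-n}
=\left(3\diam B\right)^s\sum_{n=1}^\infty \frac{b_n}{2^n},
\end{equation*}
which is finite by hypothesis. Hence $\delta^s(B)<\infty$. Since $C$ is bilipschitz equivalent to a compact ultrametric space and $\iH^s(C)>0$ with $s=\log 2/\log 3$, Proposition~\ref{p:ultraholder}\eqref{i:lip} (applied with $t=s$) yields a Lipschitz map from $C$ onto $B$. (Strictly, Proposition~\ref{p:ultraholder} is stated for ultrametric $A$; one first transports the situation across the bilipschitz equivalence, which does not affect the existence of a Lipschitz onto map.)

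For part \eqref{i:necessery}, suppose $f\colon C\to B$ is Lipschitz and onto, with Lipschitz constant $L$. Fix $n$ and let $G_n$ be the $2^n$ basic intervals of $C$ at level $n$, each of diameter $3^{-n}$; the sets $f(G_n\cap C)$ cover $B$ and each has diameter at most $L\cdot 3^{-n}$. Thus $B$ is covered by $2^n$ sets of diameter at most $L\,3^{-n}$, which (comparing with balls of radius $L\,3^{-n}$, or adjusting the scale by a bounded factor) gives $N(B,3^{-n})\le C' 2^n$ for a constant $C'$ depending only on $L$; that is, $b_n/2^n$ is bounded. Alternatively one can avoid the covering estimate entirely and argue via $\delta^s$: a Lipschitz image argument shows $\delta^s(B)\le L^s\,\delta^s(C)\cdot(\text{const})$ is not quite available since $\delta^s(C)=\infty$, so the direct covering argument is cleaner here.

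The only mildly delicate point is the bookkeeping of constants in the covering argument of part \eqref{i:necessery}, namely converting "covered by $2^n$ sets of diameter $\le L3^{-n}$" into a bound on $N(B,3^{-n})$ up to a multiplicative constant absorbed into the boundedness conclusion; this is routine. Part \eqref{i:sufficient} is essentially a direct substitution into the already-proved Theorem~\ref{t:strongerdelta2}\eqref{i:deltasandN} followed by an invocation of Proposition~\ref{p:ultraholder}, so I expect no real obstacle there.
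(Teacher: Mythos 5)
Your proposal is correct and follows essentially the same route as the paper's proof: part (1) by substituting $u=1/3$, $s=\log 2/\log 3$ into Theorem~\ref{t:strongerdelta2}~(1) and then invoking Proposition~\ref{p:ultraholder}~(2) for the ultrametric model of $C$ with $\iH^s(C)>0$, and part (2) by the elementary covering bound coming from $N(C,3^{-n})\le 2^n$. (The only slip is the constant in your display: since $u(1-u)=2/9$, it should read $(9\diam B)^s$ rather than $(3\diam B)^s$, which is immaterial for finiteness.)
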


\begin{proof}
Let $s=\log 2/\log 3$. Applying Theorem~\ref{t:strongerdelta2} \eqref{i:deltasandN} for $u=1/3$ implies that $\delta^s(B)<\infty$. Since $C$ is bilipschitz equivalent to an ultrametric space and its $s$-dimensional Hausdorff measure is positive, 
Proposition~\ref{p:ultraholder}\eqref{i:lip} completes the proof of \eqref{i:sufficient}. The necessary condition \eqref{i:necessery} easily follows from $N(C,3^{-n})\le 2^n$.
\end{proof}

\begin{remark}
Neither the condition of \eqref{i:sufficient}, nor the condition of \eqref{i:necessery} can be necessary and sufficient. 
If $B=C$ then $\sum b_n/2^n$ clearly diverges but the identity map is a trivial Lipschitz onto map. On the other hand, we show a compact set $B\subset \R^2$ such that $b_n/2^n$ is bounded but for $s=\log_3 2$ the $s$-dimensional packing measure of $B$ is infinite. Since $C$ clearly has finite $s$-dimensional packing measure, and the packing measure of a Lipschitz image is at most a finite multiple of the packing measure of the original set, this indeed implies that there is no Lipschitz onto map $f\colon C\to B$. Consider a Bedford--McMullen carpet $B\subset \R^2$ (see \cite{Be} or \cite{Mc}) with $m=3^2$ rows and $n=3^4$ columns, such that the pattern $D\subset \{0,\dots,n-1\}\times \{0,\dots, m-1\}$ has $8$ elements and $3$ are in one row and $5$ are in another one. Then the projection to the second coordinate, $\pi(D)$, has $2$ elements. 
Then by \cite{Mc} we have the formula
\begin{equation*}
\ubdim B=\log_m |\pi(D)|+\log_n \frac{|D|}{|\pi(D)|}=\log_3 2,
\end{equation*} 
and an easy calculation also yields that $b_n/2^n$ is bounded. On the other hand, as the $2$ non-empty rows of $D$ have different cardinalities, the $s$-dimensional packing measure of $B$ is infinite by \cite[Theorem~1.1]{Pe}.
\end{remark}

\section{Extensions, proof of Lemma~\ref{l:extension}}
\label{s:extension}

In this section we prove Lemma~\ref{l:extension}. 
A usual way to prove an extension result like Lemma~\ref{l:extension} is to prove a statement about retracts. 
In this case we need the following.

\begin{lemma}\label{l:lipretract}
Let $(X,d)$ be a compact ultrametric space and let $A\su X$ be a compact subset. Then there exists a Lipschitz-$1$ retraction $g\colon X\to A$, that is, a function such that $g(x)=x$ for any $x\in A$.
\end{lemma}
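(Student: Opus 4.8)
The plan is to construct the retraction $g\colon X\to A$ by mapping each point $x\in X$ to a nearest point of $A$, exploiting the ultrametric structure to make this well-defined and Lipschitz-$1$. Concretely, for $x\in X$ set $r(x)=\dist(x,A)$; since $A$ is compact and nonempty the infimum is attained, so the set $N(x)=\{a\in A: d(x,a)=r(x)\}$ is nonempty. The key structural observation is that in an ultrametric space this nearest-point set behaves very rigidly: by Fact~\ref{f:u}, the closed ball $\Bb(x,r(x))$ either equals or is disjoint from $\Bb(a,r(x))$ for each $a\in A$, and since $a\in N(x)$ we have $a\in \Bb(x,r(x))$, so $\Bb(x,r(x))=\Bb(a,r(x))$ for every $a\in N(x)$. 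Thus all the nearest points of $A$ to $x$ lie in a single ball of radius $r(x)$, and in fact $N(x)=A\cap \Bb(x,r(x))$.

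The next step is to choose, for each $x$, a specific element $g(x)\in N(x)$ in a coherent way. The cleanest route: observe that the balls $\{\Bb(x,r(x)): x\in X\setminus A\}$ that arise, together with the singletons of $A$, partition $X$ in a hierarchical fashion — indeed if two such balls intersect, one contains the other. So I can fix a well-ordering (or use compactness/Zorn) to pick for each "maximal relevant ball" a representative point of $A$ inside it, and define $g(x)$ to be the representative of the ball $\Bb(x,r(x))$; for $x\in A$ set $g(x)=x$. It is worth double-checking consistency: if $x\in A$ then $r(x)=0$ and $N(x)=\{x\}$, so the two clauses agree. Alternatively, and perhaps more slickly, one can avoid choices entirely by noting that it suffices to verify the Lipschitz-$1$ property for any selection $g(x)\in A\cap\Bb(x,\dist(x,A))$, because of the following: for $x,y\in X$, either $d(x,y)<d(x,g(x))$ — in which case $d(x,y)<\dist(x,A)\le d(x,g(y))$ forces (ultrametric!) $d(x,g(x))=d(x,y)$ or... — hmm, this needs care; the safe version is the hierarchical-selection argument.

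The heart of the proof is the Lipschitz-$1$ estimate $d(g(x),g(y))\le d(x,y)$ for all $x,y\in X$. I would split into cases according to how $d(x,y)$ compares with $\dist(x,A)$ and $\dist(y,A)$. If $d(x,y)\ge \dist(x,A)$ and $d(x,y)\ge\dist(y,A)$, then $d(x,g(x))\le d(x,y)$ and $d(y,g(y))\le d(x,y)$, so by the ultrametric inequality (applied twice) $d(g(x),g(y))\le\max\{d(g(x),x),d(x,y),d(y,g(y))\}\le d(x,y)$. The remaining case is, say, $d(x,y)<\dist(x,A)$: then $y\notin A$ and I claim $\Bb(x,\dist(x,A))=\Bb(y,\dist(y,A))$, because $d(x,y)<\dist(x,A)$ gives $\dist(y,A)=\dist(x,A)$ (every point of $A$ is at distance exactly $\dist(x,A)$ from both $x$ and $y$, by the ultrametric inequality since $d(x,y)$ is strictly smaller) and the two balls of that common radius share the point $x$, hence coincide by Fact~\ref{f:u}. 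Since $g(x)$ and $g(y)$ are then the \emph{same} chosen representative of this single ball, $d(g(x),g(y))=0\le d(x,y)$. This case analysis is where the hierarchical/coherent choice of representatives is essential, and I expect the bookkeeping around "same ball $\Rightarrow$ same representative" to be the main thing to get right; everything else is a direct application of Fact~\ref{f:u} and the strong triangle inequality. Finally, $g$ restricted to $A$ is the identity by construction, so $g$ is the desired Lipschitz-$1$ retraction, and Lemma~\ref{l:extension} follows by composing a given Lipschitz $f\colon A\to Y$ with $g$.
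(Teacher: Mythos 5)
Your proposal is correct and is essentially the paper's own argument: both define $g$ as a nearest-point selection that is constant on each sphere/ball $\{a\in A: d(x,a)=\dist(x,A)\}=A\cap\B(x,\dist(x,A))$, and both reduce the Lipschitz-$1$ estimate to the dichotomy that either $d(x,y)$ dominates both distances to $A$ (whence the strong triangle inequality gives the bound directly) or the two balls coincide and hence $g(x)=g(y)$. Two small wording issues to fix: the representative must be chosen for \emph{every} ball $\B(x,\dist(x,A))$ as a set, not only for ``maximal'' ones; and the parenthetical ``every point of $A$ is at distance exactly $\dist(x,A)$ from both'' should say that every $a\in A$ is \emph{equidistant} from $x$ and $y$ (by the isosceles property, since $d(x,y)<\dist(x,A)\le d(x,a)$), which is what yields $\dist(y,A)=\dist(x,A)$.
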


\begin{proof}
By a \emph{sphere} we mean a set of the form $S(x,r)=\{y\in X : d(x,y)=r\}$, where $x\in X$ and $r>0$. From each sphere $S$ such that $S\cap A\neq\emptyset$
we choose a point $p(S)\in S\cap A$.
For $x\in A$ let $g(x)=x$, for $x\in X\sm A$ let
$g(x)=p(S(x,\dist(x,A)))$.

It remains to check that $g$ is Lipschitz-$1$. Let $x,y\in X$ be given, we need to prove that $d(g(x),g(y)) \leq d(x,y)$. This is clear if $x,y\in A$. 

Now assume $x\in A$, $y\not\in A$.
By definition $d(y,g(y))=\dist(y,A)\le d(y,x)$.
Hence the ultrametric property for the triangle with vertices $x$, $y$, $g(y)$ yields $d(g(y),g(x))=d(g(y),x)\leq d(y,x)$.

Finally, suppose $x,y\not\in A$. Assume to the contrary that $d(x,y)<d(g(x),g(y))$. The definition of $g$ implies that 
\begin{equation} \label{e:um} 
d(x,g(x))=\dist(x,A)\le d(x,g(y))  \textrm{ and }  d(y,g(y))=\dist(y,A)\le d(y, g(x)).
\end{equation}
Applying \eqref{e:um} and the ultrametric property for the triangles with vertices $x$, $g(x)$, $g(y)$, and $y$, $g(y)$, $g(x)$, respectively, we obtain
\begin{equation} \label{e:um2} 
d(g(x),g(y))\le d(x,g(y)) \quad \textrm{and} \quad d(g(x),g(y))\le d(y, g(x)).
\end{equation}
Then $d(x,y)<d(g(x),g(y))$ and inequalities \eqref{e:um2} imply 
$d(x,y)<d(x,g(y))$ and $d(x,y)<d(y, g(x))$. Hence the ultrametric property for the triangles with vertices $x$, $y$, $g(y)$, and $x$, $y$, $g(x)$, respectively yields
\begin{equation} \label{e:um3} 
d(y,g(y))=d(x,g(y)) \quad \textrm{and} \quad d(x, g(x))=d(y,g(x)).
\end{equation}
Then inequalities~\eqref{e:um} and \eqref{e:um3} imply 
\begin{equation*}
d(x,g(x))\le d(x,g(y))=d(y,g(y))\le d(y, g(x))=d(x, g(x)),
\end{equation*}
so
\begin{equation} \label{e:um4}
d(x,g(x))=d(x,g(y))=d(y,g(y))=d(y, g(x)).
\end{equation}
Let $r=\dist(x,A)=d(x,g(x))$, then \eqref{e:um4} yields $r=d(y,g(y))=\dist(y,A)$. Then \eqref{e:um2} and \eqref{e:um4} imply 
$d(x,y)<d(g(x),g(y))\leq d(x,g(y))=r$. As $d(x,y)<r$, Fact~\ref{f:u} yields $S(x,r)=S(y,r)$. Hence by the definition of $g$ we have $g(x)=g(y)$, so $d(x,y)<d(g(x),g(y))=0$, which is a contradiction. The proof is complete.
\end{proof}

\begin{proof}[Proof of Lemma~\ref{l:extension}]
Since $Y$ is complete one can easily extend $f$ to the closure of $A$ as a Lipschitz function with the same Lipschitz constant, hence we can assume that $A$ is closed and so also compact. Let $g\colon X\to A$ be the Lipschitz-$1$ retract provided by Lemma~\ref{l:lipretract}. Then $f\circ g\colon X\to Y$ is clearly a Lipschitz extension of $f$ with the same Lipschitz constant.
\end{proof}

\section{Dimensions - proof of Corollary~\ref{c:dim}}\label{s:dim}

In this section we prove the following stronger form of Corollary~\ref{c:dim}.
Note that the technical assumptions about the domain $\iF$ of $D$ (see the first sentence and (v)) hold for any of the reasonable domains like all subsets, all Borel sets, all closed sets or all compact sets, so Corollary~\ref{c:dim} is indeed a special case of Theorem~\ref{t:dim}. 
Recall that a set is called $F_\sigma$ if it can be written as a countable union of closed sets.

\begin{theorem}\label{t:dim}
For a fixed positive integer $n$ let $\iF$ be any family of subsets of $\R^n$ that contains all compact subsets of $\R^n$.
Suppose that a function $D\colon \iF\to[0,n]$ has the following properties.
\begin{enumerate}[(i)] 
\item Lipschitz functions cannot increase $D$ for compact sets, that is, for any compact $K\su\R^n$ and Lipschitz function $f\colon K\to\R^n$ we have $D(f(K))\le D(K)$, 
\item  $D$ is monotone, that is, $A, B\in\iF$, $A\subset B$ implies that $D(A)\le D(B)$, and 
\item if $S\su\R^n$ is a homogeneous self-similar set with the SSC then $D(S)=\hdim S$.
\end{enumerate}
Then the following statements hold.
\begin{enumerate}[a)]
\item For every analytic $A\in\iF$ (in particular for any Borel $A\in\iF$) we have 
\begin{equation*} 
\hdim A\le D(A).
\end{equation*}
\item \label{b}
For any bounded $A\in\iF$ we have
\begin{equation*} D(A)\le \ubdim A.
\end{equation*}
\item 
If we also require that 

\begin{enumerate}[(i)]
\setcounter{enumii}{3}
\item \label{iv}
$D$ is $\sigma$-stable for compact sets, that is, 
$D(\cup_{i=1}^{\infty} K_i)=\sup_{i} D(K_i)$ 
whenever $\cup_{i=1}^\infty K_i\in\iF$ and every $K_i$ is compact, and
\item 
 $\iF$ contains all $F_\sigma$ sets or $A\cap F\in\iF$ for any $A\in\iF$ and closed $F\su\R^n$,
\end{enumerate}
then for every $A\in\iF$ we have
\begin{equation*}
D(A)\le \pdim A.
\end{equation*}
\end{enumerate}
\end{theorem}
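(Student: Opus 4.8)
The plan is to prove the three parts essentially independently, using the self-similar set normalization in (iii) as the bridge between $D$ and the classical dimensions, and using Theorem~\ref{t:dimimplieslip} to transfer lower bounds along Lipschitz maps.

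\emph{Part a) (lower bound $\hdim A\le D(A)$).} First I would reduce to the compact case: if $A\in\iF$ is analytic and $\hdim A>s$, then by Howroyd's theorem there is a compact $A'\su A$ with $\hdim A'>s$, and since $\iF$ contains all compact sets and $D$ is monotone it suffices to bound $D(A')$ from below. So assume $A$ compact with $\hdim A>s$; I must produce a homogeneous self-similar set $S$ with the SSC such that $\hdim S$ is as close to $s$ as we like and $S$ is a Lipschitz image of $A$. For this, pick a homogeneous self-similar set $S\su\R^n$ (e.g.\ a suitable Cantor-type product) with the SSC and $s>\hdim S$, arranged so that $\hdim S$ can be taken arbitrarily close to $s$ from below. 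Since $\hdim A>s>\hdim S=\ubdim S$ (all dimensions of such an $S$ coincide), Theorem~\ref{t:dimimplieslip} gives a compact $A'\su A$ and a Lipschitz onto map $f\colon A'\to S$. Then by (ii), (i) and (iii), $D(A)\ge D(A')\ge D(f(A'))=D(S)=\hdim S$. Letting $\hdim S\uparrow s$ and then $s\uparrow\hdim A$ yields $D(A)\ge\hdim A$.

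\emph{Part b) (upper bound $D(A)\le\ubdim A$).} Let $A\in\iF$ be bounded with $\ubdim A<s$. I would show $A$ is contained in a homogeneous self-similar set $S$ with the SSC and $\hdim S<s$ — equivalently, $A$ sits inside a self-similar ``grid'' Cantor set of dimension just above $\ubdim A$. Concretely, using $\ubdim A<s$ one gets, for a suitable integer base $k$, a uniform bound $N(A, k^{-n})\le k^{\beta n}$ for all large $n$ with $\beta<s$; from this one builds a homogeneous self-similar set $S$ (choosing at each level the grid cubes of side $k^{-1}$ relative to their parent that could meet $A$, then slightly shrinking to get the SSC) with $A\su S$ and $\hdim S=\beta'<s$. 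Then monotonicity (ii) and (iii) give $D(A)\le D(S)=\hdim S<s$; letting $s\downarrow\ubdim A$ finishes this part. The bounded hypothesis is used only to make $\ubdim A$ finite and the covering bound available.

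\emph{Part c) ($\sigma$-stable version).} Using (iv) and (v), decompose: if $A\in\iF$ then write $A=\cup_i F_i$ with each $F_i$ bounded (say $F_i=A\cap\Bb(0,i)$), which lies in $\iF$ by (v) or by the $F_\sigma$ clause after intersecting with closed balls; then $A=\cup_i K_i$ where the $K_i$ range over a countable family of compact subsets chosen so that $\sup_i\ubdim K_i$ approximates $\pdim A$ — here one invokes the definition \eqref{e:pdim} of packing dimension as the $\sigma$-stable hull of $\ubdim$ together with the fact that every bounded set is a countable union of its compact subsets with the same box dimensions controlled. Applying part b) to each $K_i$ and then (iv) gives $D(A)=\sup_i D(K_i)\le\sup_i\ubdim K_i\le\pdim A$. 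Some care is needed to arrange that the compact pieces one feeds into (iv) have box dimensions approximating $\pdim A$; this is where the technical domain hypothesis (v) does its work, ensuring the relevant intersections stay in $\iF$.

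\emph{Main obstacle.} The crux is the two self-similar set constructions: in part a), building a homogeneous SSC self-similar set of prescribed dimension that is a \emph{target} of a Lipschitz map from $A$ (handled cleanly by Theorem~\ref{t:dimimplieslip} once such an $S$ exists), and in part b), building a homogeneous SSC self-similar set \emph{containing} $A$ whose dimension is only slightly larger than $\ubdim A$. The latter is the more delicate: one must turn a box-counting estimate into an actual self-similar covering structure, which requires choosing a common base scale, passing to a uniform sub-exponential covering bound, and perturbing the chosen subcubes to restore strong separation without raising the dimension past $s$ — a standard but somewhat fiddly ``grid self-similar set'' argument.
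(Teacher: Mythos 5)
Your part a) is correct and is essentially the paper's argument (stated directly rather than by contradiction): Howroyd's theorem to pass to a compact subset, a homogeneous SSC self-similar target $S$ of dimension just below $\hdim A$, Theorem~\ref{t:dimimplieslip} to map onto it, and (i)--(iii) to conclude. The decisive problem is part b). You propose to enclose a bounded set $A$ in a homogeneous self-similar set $S$ with the SSC whose dimension is only slightly larger than $\ubdim A$, and then use monotonicity. This cannot work: a self-similar set with the SSC (with at least two pieces) is totally disconnected --- indeed homeomorphic to the Cantor set --- so it contains no nondegenerate connected set. Take $A=[0,1]\times\{0\}^{n-1}$; then $\ubdim A=1$, but $A$ is contained in no self-similar set with the SSC of any dimension, so the ``grid self-similar set'' containing $A$ does not exist, and the separation-restoring step you flag as fiddly is in fact impossible. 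The paper's proof of b) goes in the opposite direction: arguing by contradiction, it sets $K=\overline{A}$, chooses a homogeneous SSC set $S$ with $\ubdim K<\hdim S<D(K)$, and invokes Corollary~\ref{c:dimwithextension} --- i.e., the fact that $S$ is bilipschitz equivalent to an ultrametric space, so the partial Lipschitz map from Theorem~\ref{t:dimimplieslip} extends to all of $S$ --- to produce a Lipschitz map from $S$ \emph{onto} $K$. Then (i) gives $D(K)=D(f(S))\le D(S)=\hdim S<D(K)$, a contradiction. For the upper bound the self-similar set must be the source of the Lipschitz map, not a superset of $A$, and hypothesis (i) rather than (ii) carries the argument.

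Part c) also has a soft spot: you write $A=\cup_i K_i$ with the $K_i$ compact subsets of $A$ realizing $\pdim A$, but $A$ need not be $\sigma$-compact, and the infimum in \eqref{e:pdim} ranges over decompositions into arbitrary bounded sets, not compact ones. The paper instead takes an arbitrary decomposition $A=\cup_i A_i$ into bounded sets, passes to the compact sets $\overline{A_i}$ (whose union may be strictly larger than $A$), applies b) to each $\overline{A_i}$, and then uses (v) together with (ii) and (iv) --- either via $D(\cup_i\overline{A_i})\ge D(A)$ when $\iF$ contains all $F_\sigma$ sets, or via the partition $A=\cup_i(A\cap\overline{A_i})$ otherwise --- to obtain $\sup_i D(\overline{A_i})\ge D(A)$. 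Once b) is repaired along the paper's lines, this corrected version of c) goes through.
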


\begin{proof}
a) 
Suppose that for some analytic $A\in\iF$ we have $\hdim A>D(A)$.
Then by Howroyd's theorem \cite{Ho} there exists a compact set $K\su A$ with $\hdim K > D(A)$. 
By the monotonicity of $D$ we have $D(K)\le D(A)$, so we obtain that $D(K)<\hdim K$.
Let $S\su\R^n$ be a homogeneous self-similar set with the SSC such that 
\begin{equation*} 
D(K)<\ubdim S <\hdim K.
\end{equation*}
By Theorem~\ref{t:dimimplieslip} there exists a compact set $K'\su K$ and a Lipschitz onto map $f\colon K'\to S$.
Then, using our assumptions we obtain
\begin{equation*} 
D(S)=D(f(K'))\le D(K')\le D(K)<\ubdim S = D(S),
\end{equation*}
which is a contradiction.

b) Suppose that for some bounded $A\in\iF$ we have $\ubdim A<D(A)$.
Let $K$ be the closure of $A$.
Then $K$ is compact and by the monotonicity of $D$ we get $\ubdim K=\ubdim A<D(A)\le D(K)$.
Let $S\su\R^n$ be a homogeneous self-similar set with the SSC such that 
\begin{equation*} 
\ubdim K <\hdim S < D(K).
\end{equation*}
By Corollary~\ref{c:dimwithextension} there exists a Lipschitz onto map $f\colon S\to K$, and our assumptions imply
\begin{equation*} 
D(S)=\hdim S < D(K)=D (f(S)) \le D(S),
\end{equation*}
which is a contradiction. 

c) Let $A\in\iF$.
Using \eqref{e:pdim} and the fact that taking closure does not change the upper box dimension we get that
\begin{equation*}
    \pdim A=\inf\left\{\sup_i \ubdim \overline{A_i}\colon A=\cup_{i=1}^\infty A_i,\, A_i \textrm{ is bounded}\right\}.    
\end{equation*}
So let $A=\cup_{i=1}^\infty A_i$ such that each $A_i$ is bounded.
Since each $\overline{A_i}$ is compact, part 
\eqref{b} implies
$\sup_i\ubdim\overline{A_i}\ge \sup_i D(\overline{A_i})$.
Therefore, to complete the proof it is enough to show that $\sup_i D(\overline{A_i})\ge D(A)$.

If $\iF$ contains all $F_\sigma$ sets then we can apply (iv) and (ii) to obtain
\begin{equation*}
\sup_i D(\overline{A_i})=D(\cup_{i=1}^\infty \overline{A_i})\ge D(A).
\end{equation*}
If $A\cap F\in\iF$ for any $A\in\iF$ and closed $F\su\R^n$ then we apply first (ii) then (iv) for the partition $A=\cup_{i=1}^\infty (A\cap\overline{A_i})$ to obtain
\begin{equation*}
\sup_i D(\overline{A_i})\ge 
\sup_i D(A\cap\overline{A_i})
=
D(A).
\end{equation*}
We considered the alternative conditions of (v), which completes the proof.
\end{proof}

\begin{remarks} \label{r:dimensionconditions}
(1) As one can see from the proof, we used only three facts about the family $\HH$ of homogeneous self-similar sets with the SSC:
\begin{enumerate}[(A)]
    \item \label{compactH} 
    every $S\in\HH$ is compact,
    \item \label{equaldim} 
    $\hdim S=\ubdim S$ for any $S\in\HH$ and
    \item \label{dense} 
    for any $0\le a<b\le n$ there exists a set $S\in\HH$ such that $a<\hdim S<b$.
\end{enumerate}
Thus in condition (iii) of Theorem~\ref{t:dim} the class of self-similar sets with the SSC can be replaced by any other family $\HH$ of compact subsets of $\R^n$ with the above properties.
For example, we could also use sets of the form $A \times \ldots \times A$, where $A$ is a middle-$c$ Cantor set for some rational $c$.

(2) For a fixed $\iF$ let
\begin{equation*}
D_1(A)=\sup\{\hdim K \colon K\su A \text{ compact}\}
\quad (A\in\iF).
\end{equation*}
Since part (a) of Theorem~\ref{t:dim} can be applied to any compact subset of $\R^n$, using also the monotonicity condition (ii), we obtain that (a) can be replaced by the following, more general statement:
\begin{itemize}
    \item[a')] For any $A\in\iF$ we have $D_1(A)\le D(A)$.
\end{itemize}

Since Hausdorff dimension clearly satisfies (i), (ii) and (iii), by (a) of Theorem~\ref{t:dim} it is the
smallest function on $\iF$ that satisfies (i), (ii) and (iii), provided $\iF$ contains only analytic sets.
Note that $D_1$ also satisfies conditions (i), (ii) and (iii), so on a general $\iF$ this is the smallest function that satisfy (i), (ii) and (iii).

Recall that a set $B\su\R^n$ is called a Bernstein set if neither $B$, nor $\R^n\sm B$ contains any uncountable compact subset. It is well known that such sets exist, see e.g.~\cite{Kec}. 
Then clearly $D_1(B)=D_1(\R^n\sm B)=0$ but 
$B$ or $\R^n\sm B$ has Hausdorff dimension $n$.
This shows that the condition that $A$ is analytic cannot be removed from Theorem~\ref{t:dim} (a). 

(3) The following example shows that in Theorem~\ref{t:dim} one cannot remove assumption (v) about the domain $\iF$ of $D$, not even if we require $\sigma$-stability for all sets of $\iF$, that is, if we replace (iv) by the following stronger assumption: 
\begin{itemize}
    \item [(iv')] $D(\cup_{i=1}^\infty A_i)=\sup_i D(A_i)$ whenever $\cup_{i=1}^\infty A_i\in\iF$ and $A_i\in\iF$ for all $i$.
\end{itemize}

Since there are only continuum many $F_\sigma$ sets and every set of cardinality continuum has more than continuum many subsets, there exists an unbounded non-$F_\sigma$ set $E\su\R^n$ such that $\pdim E<n$.
Let $\iF$ consist of $E$ and all compact subsets of $\R^n$ and let $D_2$ be the packing dimension for the compact sets and let $D_2(E)=n$. 
This $\iF$ and $D_2$ have all the required properties but (v), still $D_2(A)\le\pdim(A)$ does not hold for $A=E$. 

On the other hand, we claim that  
if we replace (iv) by the even stronger assumption that
\begin{itemize}
\item [(iv'')]
$D(A)\le\sup_{i} D(K_i)$
whenever $A\su\cup_{i=1}^\infty K_i$, $A\in\iF$ and every $K_i$ is compact 
\end{itemize}
then assumption (v) about $\iF$ can be dropped.
Indeed, in this case in the proof of (c)
the needed inequality $\sup_i D(\overline{A_i})\ge D(A)$ directly follows from (iv'').
\end{remarks}

\section{Lipschitz functions between self-similar sets}
\label{s:ssc}

In this section we prove Theorem~\ref{t:ssc}. 
We need some preparation. 
First we prove the following algebraic lemma,
which might be known but for completeness we present a proof.

\begin{lemma}\label{l:algebra}
Let $q>1$ be an integer, which is not of the form $n^k$ for any integers $n,k>1$ and let $r_1,\ldots,r_m$ be rational numbers. Then 
\begin{equation} \label{eq:ri}
 q^{r_1}+\ldots+q^{r_m}=1    
\end{equation}
implies that every $r_i$ is an integer.
\end{lemma}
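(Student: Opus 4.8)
The plan is to reduce equation~\eqref{eq:ri} to an equality of algebraic integers in a suitable cyclotomic-type extension and then extract an integrality contradiction from the ramification of the prime divisors of $q$. First I would clear denominators: write $r_i = a_i/N$ with a common denominator $N\in\N$, and set $x = q^{1/N}$ (the positive real $N$th root). Then \eqref{eq:ri} becomes $\sum_{i=1}^m x^{a_i} = 1$, i.e.\ $x$ is a root of the polynomial $P(t) = \sum_i t^{a_i} - 1 \in\Z[t]$. After clearing possible negative exponents by multiplying through by a power of $x$ we may assume all $a_i\ge 0$, so $x$ is an algebraic integer; in particular it is a root of the monic $t^N - q$. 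The strategy is to exploit the hypothesis that $q$ is not a perfect power: this is exactly the condition under which $t^N - q$ is irreducible over $\Q$ for every $N$ (by the standard criterion, e.g.\ using that a prime dividing $q$ occurs to a coprime-to-its-multiplicity power, or Capelli's theorem together with $q$ not being a 4th power which is automatic here). Hence $[\Q(x):\Q] = N$ and $x$ is a root of $P$ forces $t^N - q \mid P(t)$ in $\Q[t]$ — but $\deg P$ can be as large as $\max_i a_i$, so this alone is not yet a contradiction; I need to use the conjugates.

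The key step is then a valuation / Newton-polygon argument. Fix a prime $p\mid q$, say $p^e \| q$ with $e\ge 1$, and pass to a place of $\Q(x)$ above $p$. Since $x^N = q$, the $p$-adic valuation of $x$ (normalized so that $v(q)=e$) is $v(x) = e/N$, which has denominator exactly $N/\gcd(e,N)$; because $q$ is not a perfect power, for a suitable choice of $p$ we get $\gcd(e,N)$ small — in fact, choosing $p$ so that $v_p(q)$ is coprime to the largest relevant factor, we can arrange $v(x) = e/N$ with $e$ and $N$ producing a genuinely fractional valuation whenever $N>1$. Now look at $\sum_i x^{a_i} = 1$: the right side has valuation $0$, so among the terms $x^{a_i}$ the minimum valuation $\min_i (a_i e/N)$ must be attained by at least two indices (ultrametric inequality), and moreover the minimum cannot be positive. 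Tracking these constraints modulo $N$ for the exponents $a_i$, and doing this simultaneously for all $p\mid q$, should force $N \mid a_i$ for every $i$, i.e.\ every $r_i = a_i/N$ is an integer — which is the conclusion.

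Let me restate the cleaner version I would actually write. Assume $N>1$ and not all $a_i$ divisible by $N$; derive a contradiction. Work in the splitting field $L$ of $t^N - q$, so $L \supseteq \Q(\zeta_N, q^{1/N})$. Over $L$, \eqref{eq:ri} holds with $x = q^{1/N}$, but we may also replace $x$ by $\zeta_N^j x$ for any $j$ (these are the Galois conjugates, though $\sum \zeta_N^{j a_i} x^{a_i}=1$ need not hold for all $j$ — careful: only the conjugate of the \emph{true} relation holds, i.e.\ $\sum_i (\sigma x)^{a_i} = 1$ for every $\sigma\in\mathrm{Gal}(L/\Q)$, and $\sigma x$ ranges over $\{\zeta_N^j x\}$). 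So in fact $\sum_i \zeta_N^{j a_i} x^{a_i} = 1$ \emph{does} hold for every $j$ coprime-to-nothing, for all $j$ such that $\zeta_N^j x$ is a conjugate of $x$, which (by irreducibility of $t^N-q$) is all $j\in\{0,\dots,N-1\}$. Summing over $j=0,\dots,N-1$: $\sum_i x^{a_i} \sum_j \zeta_N^{j a_i} = N$, and $\sum_j \zeta_N^{j a_i}$ equals $N$ if $N\mid a_i$ and $0$ otherwise. Hence $N\sum_{i:\, N\mid a_i} x^{a_i} = N$, i.e.\ $\sum_{i:\, N\mid a_i} q^{a_i/N} = 1$ with each $a_i/N\in\Z$. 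Combined with the original $\sum_i q^{a_i/N}=1$ and the fact that every term $q^{a_i/N}$ is a positive real, this forces $\sum_{i:\, N\nmid a_i} q^{a_i/N} = 0$, so there are no such terms: $N\mid a_i$ for all $i$, contradiction.

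The main obstacle — and the one step that needs genuine care — is justifying that $\sigma x = \zeta_N^j x$ holds for \emph{all} $j$, equivalently that $t^N - q$ is irreducible over $\Q$; this is precisely where the hypothesis "$q$ is not of the form $n^k$" is used, via the classical irreducibility criterion for $t^N - a$ (it is irreducible over $\Q$ iff $a$ is not a $p$th power for any prime $p\mid N$ and, if $4\mid N$, $a\notin -4\Q^4$; the latter cannot occur for $a=q>1$ a non-perfect-power integer, and the former is exactly our hypothesis). One should also double-check the harmless reductions: that a common denominator $N$ can be chosen, and that multiplying by a power of $x$ to make exponents nonnegative does not disturb anything (it only permutes and reindexes, and $N\mid a_i$ is invariant under shifting all $a_i$ by the same multiple of... no — shifting by a constant $c$ changes $a_i\mapsto a_i+c$, which preserves $N\mid a_i$ only if $N\mid c$; so one must shift by a multiple of $N$, which is fine since we can clear the negative exponents using $x^{N\cdot(\text{large})} = q^{\text{large}}$ — and in any case the conclusion "$r_i\in\Z$" is literally shift-by-integer invariant once we know $N=1$). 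Everything else is routine.
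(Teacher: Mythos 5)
Your ``cleaner version'' is a correct and complete proof, and it reaches the conclusion by a genuinely different final step than the paper, even though both arguments hinge on the same key fact: for a positive integer $q$ that is not a perfect power, $t^N-q$ is irreducible over $\Q$ for every $N$ (the criterion from Lang that you quote, with the $-4\Q^4$ clause indeed vacuous for positive $q$). The paper exploits irreducibility through a degree count: writing $r_i=k_i+p_i/n$ with $0\le p_i<n$, it forms the rational polynomial $R(t)=-1+\sum_i q^{k_i}t^{p_i}$, which has positive degree less than $n$ (its leading coefficient is a sum of positive rationals, hence nonzero) and vanishes at $q^{1/n}$ --- contradicting that the minimal polynomial of $q^{1/n}$ has degree $n$. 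You instead exploit irreducibility through the Galois action: conjugating the relation $\sum_i x^{a_i}=1$ by all automorphisms sending $x\mapsto\zeta_N^jx$, summing over $j$, and using $\sum_j\zeta_N^{ja_i}=N\cdot[N\mid a_i]$ to isolate $\sum_{N\mid a_i}q^{a_i/N}=1$, after which positivity of the remaining terms forces them to be absent. The paper's route is shorter and avoids splitting fields and roots of unity; yours is slightly longer but more transparent about \emph{which} terms must carry integer exponents, and it needs no reduction of the exponents modulo $N$ (field automorphisms handle negative exponents automatically, so your worries about clearing denominators and shifting exponents are in fact unnecessary). Your opening valuation/Newton-polygon sketch is not needed and, as written (``should force\dots''), is not a proof --- but the averaging argument that follows stands entirely on its own.
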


\begin{proof}
Assume to the contrary that \eqref{eq:ri} holds and not every $r_i$ is an integer. Let $n$ be the smallest 
common divisor of $r_1,\ldots,r_n$.
Then each $r_i$ can be written as $r_i=k_i+\frac{p_i}{n}$, where $k_i\in\Z$ and $p_i\in\{0,1,\ldots,n-1\}$.
Define the algebraic number $z=q^{1/n}$ and the rational polynomial
\begin{equation*}
 R(x)=-1+\sum_{i=1}^m q^{k_i} x^{p_i}.
\end{equation*}
We obtain that $R(z)=0$ by \eqref{eq:ri}, and clearly $\deg(R)<n$. 
Since not every $r_i$ is an integer, $\max_i p_i>0$, so $\deg(R)>0$.
Hence the minimal polynomial of $z$ over $\QQ$ has degree less than $n$.

Finally, we prove that $P(x)=x^n-q$ is the minimal polynomial of $z$ over $\QQ$, which will be a contradiction. Clearly $P(z)=0$. By \cite[Theorem~9.1, Chapter~6]{L} the polynomial $Q(x)=x^u-v$ for positive integers $u,v$ is irreducible over $\QQ$ if and only if for every prime $p|u$ we have $v^{1/p}\notin \QQ$, which is equivalent to $v^{1/p}\notin \NN^+$. As $q>0$ is not a perfect power, this implies that $P$ is irreducible over $\QQ$. Therefore, $P$ is indeed the minimal polynomial of $z$, and the proof is complete.
\end{proof}

\begin{notation}\label{n:F}
For metric spaces (or subsets of metric spaces) $A$ and $B$
we denote by $F(A,B)$ the minimal number of Lipschitz-$1$ images of $A$ that can cover $B$, that is,
\begin{equation*}
F(A,B)=\min\left\{k\colon \exists f_1,\ldots,f_k\colon A\to B \text{ Lipschitz-$1$ s.t.}~\cup_{i=1}^k f_i(A)=B\right\}.
\end{equation*}

For a metric space $A$ and an $r>0$ we denote by $rA$ the
scaled copy of $A$ in which every distance is the $r$-multiple
of the corresponding distance in $A$.
\end{notation}

\begin{fff}\label{f:scalingF}
For any metric spaces $A$ and $B$ and $r>0$ we have
$F(rA,rB)=F(A,B)$.
\end{fff}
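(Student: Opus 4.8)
The plan is to observe that scaling the metric by a fixed factor $r>0$ sets up a dictionary between the two sides of the claimed identity under which both the Lipschitz-$1$ property and surjectivity are preserved. First I would note that $rA$ and $A$ have literally the same underlying set of points, the only difference being that $d_{rA}=r\,d_A$, and likewise $d_{rB}=r\,d_B$. Consequently, a function $f$ (viewed as a map of point sets) is Lipschitz-$1$ from $A$ to $B$ if and only if it is Lipschitz-$1$ from $rA$ to $rB$: for all $x,y$ we have $d_{rB}(f(x),f(y))=r\,d_B(f(x),f(y))$ and $d_{rA}(x,y)=r\,d_A(x,y)$, so $d_B(f(x),f(y))\le d_A(x,y)$ holds precisely when $d_{rB}(f(x),f(y))\le d_{rA}(x,y)$. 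Moreover $f(A)=B$ if and only if the same set map satisfies $f(rA)=rB$, since surjectivity is a purely set-theoretic condition.

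Combining these two observations, a family $f_1,\dots,f_k\colon A\to B$ of Lipschitz-$1$ maps with $\bigcup_{i=1}^k f_i(A)=B$ is, after reinterpreting each $f_i$ as a map $rA\to rB$, exactly a family of Lipschitz-$1$ maps with $\bigcup_{i=1}^k f_i(rA)=rB$, and conversely. Hence the two minima in Notation~\ref{n:F} range over families indexed in the same way, giving $F(rA,rB)=F(A,B)$; alternatively one extracts $F(rA,rB)\le F(A,B)$ from one direction and then applies the same inequality with $r$ replaced by $1/r$ and $(A,B)$ replaced by $(rA,rB)$, using $(1/r)(rA)=A$ and $(1/r)(rB)=B$, to obtain the reverse inequality. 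There is essentially no obstacle here; the only point requiring a little care is to keep straight that $rA$ and $A$ differ as metric spaces but not as sets, so that transporting a map through the identity on points is legitimate.
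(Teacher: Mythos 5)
Your argument is correct and is exactly the immediate verification the paper has in mind (the paper states this as a Fact without proof): rescaling both metrics by the same factor $r$ preserves the Lipschitz-$1$ condition and surjectivity of the underlying set maps, so the two minima coincide. No issues.
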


\begin{lemma}\label{l:unionofNrA}
    Let $A$ and $B$ be metric spaces and $r>0$. 
    Suppose that $A=\cup_{i=1}^N A_i$ such that every $A_i$
    is isometric to $rA$. Then
 \begin{equation*} 
F(A,B)\ge \ceil*{\frac{F(rA,B)}{N}}.
\end{equation*}
Furthermore, if $\dist(A_i,A_j)\ge\diam B$ for every $i\neq j$,
then 
\begin{equation*} 
F(A,B)=\ceil*{\frac{F(rA,B)}{N}}.
\end{equation*}
\end{lemma}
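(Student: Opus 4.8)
\textbf{Proof plan for Lemma~\ref{l:unionofNrA}.}

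The plan is to prove the two assertions separately, both by elementary counting arguments using the scaling invariance recorded in Fact~\ref{f:scalingF}.

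For the inequality $F(A,B)\ge \ceil*{F(rA,B)/N}$: suppose $f_1,\ldots,f_k\colon A\to B$ are \lipone maps with $\cup_{i=1}^k f_i(A)=B$, where $k=F(A,B)$. For each $i$ and each $j\in\{1,\ldots,N\}$, the restriction $f_i|_{A_j}$ is a \lipone map from $A_j$ to $B$; since $A_j$ is isometric to $rA$, composing with the isometry gives a \lipone map $rA\to B$. These $kN$ maps together cover $\cup_{i,j} f_i(A_j)=\cup_i f_i(A)=B$ (using $A=\cup_j A_j$). Hence $F(rA,B)\le kN$, which rearranges to the claimed bound.

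For the ``furthermore'' part we must produce, under the extra hypothesis $\dist(A_i,A_j)\ge\diam B$ for $i\ne j$, a covering of $B$ by exactly $\ceil*{F(rA,B)/N}$ \lipone images of $A$; combined with the inequality already proved this gives equality. Write $\ell=F(rA,B)$ and $k=\ceil{\ell/N}$; fix \lipone maps $g_1,\ldots,g_\ell\colon rA\to B$ covering $B$. The idea is to pack the $g_t$'s into $k$ batches of at most $N$ each, and to glue the maps in one batch into a single \lipone map $A\to B$ by using a different piece $A_j$ for each $g_t$ in the batch. Concretely, for a batch $\{g_{t_1},\ldots,g_{t_p}\}$ with $p\le N$, define $f\colon A\to B$ by letting $f$ agree on $A_j$ with $g_{t_j}$ (precomposed with the isometry $A_j\to rA$) for $j=1,\ldots,p$, and arbitrarily (say constant) on the remaining $A_j$'s. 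The separation hypothesis is exactly what makes this $f$ globally \lipone: for $x\in A_i$, $y\in A_j$ with $i\ne j$ we have $d(f(x),f(y))\le\diam B\le \dist(A_i,A_j)\le d(x,y)$, while within a single $A_j$ the map is \lipone by construction. Running over all $k$ batches, the images $f(A)$ cover $\cup_{t=1}^\ell g_t(rA)=B$, so $F(A,B)\le k$, completing the proof.

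The main obstacle is the ``furthermore'' direction, and within it the verification that the glued map $f$ is \lipone across different pieces $A_i,A_j$; this is where the hypothesis $\dist(A_i,A_j)\ge\diam B$ is used in an essential way, and one should be slightly careful that the estimate $d(f(x),f(y))\le\diam B$ holds because $f(A)\subset B$ so any two values of $f$ are at distance at most $\diam B$. The packing of the $\ell$ maps into $\ceil{\ell/N}$ batches of size $\le N$ is routine. One should also note the edge cases: if some $A_j$ is not used in a batch, setting $f$ constant there is harmless since a constant map is \lipone and the separation bound still applies; and if $B$ is unbounded ($\diam B=\infty$) the separation hypothesis cannot hold for $i\ne j$ unless $N=1$, in which case the statement is trivial.
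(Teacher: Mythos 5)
Your proposal is correct and follows essentially the same route as the paper: the lower bound by restricting the covering maps to the $N$ isometric pieces, and the upper bound by batching the $F(rA,B)$ maps into groups of at most $N$ and gluing them along the pieces, with the separation hypothesis $\dist(A_i,A_j)\ge\diam B$ guaranteeing the glued map is Lipschitz-$1$ across distinct pieces. Your write-up is in fact more detailed than the paper's (which leaves the gluing verification implicit), and the edge-case remarks are harmless.
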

\begin{proof}
    Let $m=F(rA,B)$. If $B$ can be covered by less than $m/N$
    Lipschitz-$1$ images of $A$ then we get a cover of $B$ by 
    less than $m$ Lipschitz-$1$ images of $rA$, which would
    contradict $m=F(rA,B)$. Therefore, $F(A,B)\ge \ceil*{\frac{F(rA,B)}{N}}$.

    If for every $i\neq j$ we have $\dist(A_i,A_j)\ge\diam B$, 
    then from any $N$ Lipschitz-$1$ onto maps $f_i\colon rA\to B$  we can get a Lipschitz-$1$ onto map $f \colon A\to B$. Thus $F(A,B)\le \ceil{m/N}=\ceil*{\frac{F(rA,B)}{N}}$.
\end{proof}

\begin{lemma}\label{l:unionofnB}
Let $A$ and $B$ be metric spaces. Suppose that $B=\cup_{j=1}^n B_j$ and for every $i\neq j$ we have $\dist(B_i,B_j)>\diam A$. Then 
\begin{equation*}
F(A,B)=\sum_{j=1}^n F(A,B_j).    
\end{equation*}
\end{lemma}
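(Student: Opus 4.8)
The plan is to prove the two inequalities separately, using the separation hypothesis to decouple the coverings into independent pieces, one for each $B_j$.

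\textbf{The upper bound $F(A,B)\le \sum_{j=1}^n F(A,B_j)$.} This direction is the easy one. For each $j$, pick a family of $F(A,B_j)$ Lipschitz-$1$ maps $g^{(j)}_1,\ldots,g^{(j)}_{F(A,B_j)}\colon A\to B_j$ whose images cover $B_j$. Simply take the union of all these families over $j=1,\ldots,n$; this is a collection of $\sum_j F(A,B_j)$ Lipschitz-$1$ maps $A\to B$ (each $g^{(j)}_i$ maps into $B_j\subset B$), and their images cover $\cup_j B_j=B$. Hence $F(A,B)\le\sum_j F(A,B_j)$. Note this direction does not even use the separation hypothesis.

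\textbf{The lower bound $F(A,B)\ge \sum_{j=1}^n F(A,B_j)$.} Here is where the hypothesis $\dist(B_i,B_j)>\diam A$ enters. Let $f_1,\ldots,f_k\colon A\to B$ be Lipschitz-$1$ maps with $\cup_{i=1}^k f_i(A)=B$ and $k=F(A,B)$. The key observation is that each $f_i(A)$ is contained in a single $B_j$: indeed $f_i(A)$ has diameter at most $\diam A$ (as $f_i$ is Lipschitz-$1$), so it cannot meet two different $B_j$'s, since those are at distance more than $\diam A$ apart. Thus each index $i\in\{1,\ldots,k\}$ can be assigned a label $j(i)$ with $f_i(A)\subset B_{j(i)}$; for a given $j$, the maps $\{f_i : j(i)=j\}$ are Lipschitz-$1$ maps $A\to B_j$ whose images cover $B_j$ (every point of $B_j\subset B$ is covered by some $f_i(A)$, and that $f_i(A)$ must then lie in $B_j$). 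Consequently $\#\{i : j(i)=j\}\ge F(A,B_j)$ for each $j$, and summing over $j$ gives $k=\sum_{j=1}^n \#\{i:j(i)=j\}\ge \sum_{j=1}^n F(A,B_j)$.

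\textbf{Remarks on obstacles.} There is essentially no hard step: the whole argument rests on the elementary diameter bound for Lipschitz-$1$ images together with the strict separation of the pieces. The one point that deserves a word of care is the degenerate case where some $F(A,B_j)=\infty$ (i.e.\ $B_j$ cannot be covered by finitely many Lipschitz-$1$ images of $A$); then both sides are $\infty$ and the identity still holds, once one adopts the convention that $F$ takes values in $\mathbb{N}\cup\{\infty\}$ and the minimum over the empty set is $\infty$. One should also implicitly assume $n\ge 1$ and each $B_j$ nonempty (empty pieces contribute $F(A,\emptyset)=0$ and can be discarded). With these conventions the proof above goes through verbatim.
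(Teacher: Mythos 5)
Your proof is correct and follows the same idea as the paper's (much terser) argument: since a Lipschitz-$1$ image of $A$ has diameter at most $\diam A$, it can meet at most one $B_j$, so any covering of $B$ decomposes into independent coverings of the $B_j$'s. You spell out both inequalities explicitly, which the paper leaves implicit, but the underlying observation is identical.
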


\begin{proof}
Since $\dist(B_i,B_j)>\diam A$, no Lipschitz-$1$ image of $A$ can intersect more than one $B_i$. This implies the claim.
\end{proof}

Recall that a real valued function $f$ defined on a metric space $X$ is called \emph{lower semicontinuous}
if whenever $x, x_1, x_2,\ldots\in X$, $f(x_n)\le y$ for every $n$ and $x_n\to x$ then $f(x)\le y$.
It is well known that such a function has a minimum on any compact set.


\begin{lemma}\label{l:lsc}
For any compact metric spaces $A$ and $B$, the function $f(x)=F(xA, B)$ is lower semicontinuous on $(0,\infty)$.
\end{lemma}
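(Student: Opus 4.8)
The goal is to show that if $x_n\to x$ in $(0,\infty)$ and $F(x_nA,B)\le y$ for all $n$, then $F(xA,B)\le y$. Since $F$ takes values in $\mathbb{N}$, we may pass to a subsequence and assume $F(x_nA,B)=k$ is constant with $k\le y$; we must produce $k$ Lipschitz-$1$ onto maps $f_i\colon xA\to B$. For each $n$ fix Lipschitz-$1$ maps $f_{n,1},\ldots,f_{n,k}\colon x_nA\to B$ with $\bigcup_{i=1}^k f_{n,i}(x_nA)=B$. The plan is a standard Arzel\`a--Ascoli / diagonal compactness argument: the maps $f_{n,i}$, viewed as maps from the fixed underlying set $A$ into the compact space $B$, are equicontinuous (each has modulus of continuity controlled by the common factor $\sup_n x_n<\infty$, since $d_{x_nA}(a,a')=x_n\,d_A(a,a')$), and $B$ is compact. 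So by Arzel\`a--Ascoli, after passing to a further subsequence, for each $i$ the sequence $(f_{n,i})_n$ converges uniformly on $A$ to some limit map $f_i\colon A\to B$.

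\textbf{Key steps.}
First I would check that each limit $f_i$ is Lipschitz-$1$ as a map $xA\to B$: from $d_B(f_{n,i}(a),f_{n,i}(a'))\le x_n d_A(a,a')$ and $f_{n,i}\to f_i$ pointwise with $x_n\to x$, we get $d_B(f_i(a),f_i(a'))\le x\,d_A(a,a')$, which is exactly the Lipschitz-$1$ condition for the rescaled metric $xA$. Second, and this is the only genuinely delicate point, I must verify that the $f_i$ still jointly cover $B$, i.e. $\bigcup_{i=1}^k f_i(xA)=B$. Each $f_i(A)$ is compact (continuous image of compact $A$), so $\bigcup_i f_i(A)$ is closed; it therefore suffices to show it is dense in $B$. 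Fix $b\in B$ and $\varepsilon>0$. For each $n$, $b\in f_{n,i_n}(A)$ for some index $i_n\in\{1,\ldots,k\}$; passing once more to a subsequence we may take $i_n\equiv i$ fixed, so there are $a_n\in A$ with $f_{n,i}(a_n)=b$. By compactness of $A$ we may assume $a_n\to a\in A$. Then $d_B(f_i(a),b)=d_B(f_i(a),f_{n,i}(a_n))\le d_B(f_i(a),f_i(a_n))+d_B(f_i(a_n),f_{n,i}(a_n))$, and both terms tend to $0$ (the first by continuity of $f_i$, the second by uniform convergence $f_{n,i}\to f_i$). Hence $b\in f_i(A)=f_i(xA)\subset\bigcup_j f_j(xA)$. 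So the cover is onto, giving $F(xA,B)\le k\le y$, as required.

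\textbf{Main obstacle.}
The Arzel\`a--Ascoli step and the Lipschitz-$1$ verification are routine; the point requiring the most care is the surjectivity of the limiting cover, where one must combine the (sub)sequential constancy of the covering index with the convergence of preimages in $A$ and the uniform convergence of the maps. The only subtlety is bookkeeping the several successive passages to subsequences (one to fix $k$, one for Arzel\`a--Ascoli on all $k$ maps simultaneously, and one to fix the covering index of a given $b$) — but since each is a standard diagonal extraction and the final conclusion $F(xA,B)\le y$ only needs one well-chosen subsequence, no circularity arises.
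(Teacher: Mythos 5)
Your proposal is correct and follows essentially the same route as the paper: reduce to a constant value $k$ along a subsequence, identify $x_nA$ with $A$ via the natural rescaling, apply Arzel\`a--Ascoli to extract uniform limits $f_i$, and verify that the limits are Lipschitz-$1$ for the metric of $xA$ and still cover $B$. Your treatment of the surjectivity of the limiting cover is in fact more detailed than the paper's, which simply asserts that it follows from uniform convergence.
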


\begin{proof} 
By definition, it is enough to prove that if $z_n\to z$ and $F(z_nA,B)\leq k$ for all $n$, then $F(zA,B)\leq k$. Let $f_{i,n}\colon z_nA\to B$ be Lipschitz-$1$ functions such that $\cup_{i=1}^{k} f_{i,n}(A)=B$ for all $n$. 
For each $n$ let $h_n\colon zA\to z_nA$ be the natural bijection between $zA$ and $z_nA$.
As the sequences $\{f_{i,n}\circ h_n\}_{n\geq 1}$ are equicontiuous for all $1\leq i\leq k$, using the Arzel\`a-Ascoli theorem $k$-times and passing to a subsequence we may assume that there are functions $f_i\colon zA\to B$ such that $f_{i,n}\circ h_n\to f_i$ uniformly for all $1\leq i\leq k$. Then clearly every $f_i$ is a Lipschitz-$1$ function. Finally, it is enough to see that $\cup_{i=1}^k f_i(zA)=B$, which easily follows from the uniform convergence and that $\cup_{i=1}^k (f_{i,n}\circ h_n)(zA)=B$ for all $n$. 
\end{proof}




Now we are ready to prove Theorem~\ref{t:ssc}.
We prove it in the following stronger form.

\begin{theorem}\label{t:strongssc}
Let $A$ be a homogeneous self-similar set with the strong separation condition, that is, $A$ is the disjoint union of $q$ similar $r$-scaled copies of $A$ for some $r>0$ real and $q>1$ integer. Let $k$ be the maximal positive integer such that $\root k \of q$ is integer.
Let $B$ be any self-similar set with the SSC such that $\hdim A=\hdim B$. Then the following statements are equivalent:
\begin{enumerate}[(i)]
\item \label{i1} $A$ can be mapped onto $B$ by a Lipschitz map;
\item \label{i2} All similarity ratios of $B$ are positive integer powers of 
$\root k \of r$; 
\item \label{i3} $A$ and $B$ are bilipschitz equivalent.
\end{enumerate}
\end{theorem}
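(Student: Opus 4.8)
The plan is to prove the cycle $\eqref{i3}\Rightarrow\eqref{i1}\Rightarrow\eqref{i2}\Rightarrow\eqref{i3}$. The implication $\eqref{i3}\Rightarrow\eqref{i1}$ is immediate since a bilipschitz bijection is in particular a Lipschitz onto map. For $\eqref{i2}\Rightarrow\eqref{i3}$ I would invoke the known sufficient conditions for bilipschitz equivalence of self-similar sets with the SSC (in the style of Falconer--Marsh~\cite{FM} and Xi~\cite{X}): when $\hdim A=\hdim B$ and all contraction ratios of $B$ are integer powers of $\root k\of r$, while $A$ is homogeneous with ratio $r=(\root k\of r)^k$, one can pass to a common refinement of both iterated function systems so that $A$ and $B$ become self-similar sets generated by maps all of whose ratios are powers of the single number $\rho=\root k\of r$; then a standard argument (matching the two trees of cylinder sets level-by-level and using the SSC to control distances, exactly as in the Cantor set case quoted in the introduction) produces the bilipschitz bijection. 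I expect this step to be essentially a citation plus a short verification that the dimension equality forces the combinatorics to match up.

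The heart of the theorem is $\eqref{i1}\Rightarrow\eqref{i2}$, and this is where the machinery of $F(A,B)$ (Notation~\ref{n:F}) and the lemmas of this section come in. Suppose $f\colon A\to B$ is Lipschitz onto with constant $L$; by Fact~\ref{f:scalingF} we may rescale and assume $f$ is Lipschitz-$1$, so $F(A,B)<\infty$, and more importantly $F(tA,B)<\infty$ for every $t>0$ (shrinking $A$ only helps). Now fix any similarity map of $B$ with ratio $c$, so $B$ contains a scaled copy $cB$ together with the complementary pieces, all at mutual distance bounded below. Iterating the self-similarity of $A$ ($A$ splits into $q$ copies of $rA$, hence into $q^\ell$ copies of $r^\ell A$), and iterating that of $B$, one obtains from Lemmas~\ref{l:unionofNrA} and~\ref{l:unionofnB} recursive relations for the quantities $F(r^\ell A, B)$ and $F(r^\ell A, cB)=F((r^\ell/c)A,B)$ (using $F(A,cB)=F((1/c)A,B)$, which follows from Fact~\ref{f:scalingF} applied with scaling factor $1/c$). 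Comparing the growth rates of $\ell\mapsto F(r^\ell A,B)$ obtained two different ways, and using that $\hdim A=\hdim B$ pins down the exponential growth rate (roughly $F(tA,B)\asymp t^{-s}$ with $s=\hdim A=\hdim B$, since covering by Lipschitz-$1$ images is comparable to covering by balls up to the combinatorics of $B$), forces an arithmetic compatibility between $r$ and $c$: namely $\log c/\log r$ must be rational. Feeding this into Lemma~\ref{l:algebra} via the self-similar identity $\sum_j c_j^{s}=1$ for the ratios $c_j$ of $B$ — rewritten with base $q=r^{-s}$, an integer of the form (perfect-$k$-th-power)-free after extracting the maximal root — shows every $\log c_j/\log r$ is actually in $\frac1k\ZZ$, and positivity of the exponents (contractions, plus $\diam(c_jB)<\diam B$) makes them positive integer powers of $\root k\of r$. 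Lemma~\ref{l:lsc} is the technical glue that lets one take limits/infima in $t$ when extracting the sharp growth constant.

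The main obstacle I anticipate is making the growth-rate comparison rigorous: one needs two-sided control of $F(tA,B)$ as $t\to 0$ that is precise enough to extract $\log c/\log r\in\QQ$, not merely the order of magnitude. The upper bound on $F(tA,B)$ comes from covering $B$ by $N(B,\diam(tA))$ balls and mapping $tA$ onto each (a ball of the right radius is a Lipschitz-$1$ image of $tA$ once $A$ has enough internal scaling, which homogeneity with the SSC provides), and the lower bound comes from the separated pieces of $B$ together with Lemmas~\ref{l:unionofNrA}--\ref{l:unionofnB}; the delicate point is that these bounds involve the unknown fine structure of $B$, so one works instead with the \emph{exact} recursions along the two self-similar hierarchies and lets the ratio of the two recursion lengths — which is exactly $\log c/\log r$ in the limit — be forced to be rational because $F$ takes integer values and the sequences $F(r^\ell A,B)$ grow like $q^\ell$ up to bounded multiplicative error. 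Once rationality is in hand, Lemma~\ref{l:algebra} does the rest cleanly.
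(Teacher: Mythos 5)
Your skeleton is right: the cycle $(iii)\Rightarrow(i)\Rightarrow(ii)\Rightarrow(iii)$, the reduction of $(ii)\Rightarrow(iii)$ to known bilipschitz-equivalence results after extracting the $k$-th root, the identification of $(i)\Rightarrow(ii)$ as the heart, and the final step ``rationality of the exponents $\alpha_j=\log\beta_j/\log r$ plus Lemma~\ref{l:algebra} applied to $\sum_j q^{-\alpha_j}=1$ gives integrality'' all match the paper. The finiteness of $F(tA,B)$ for all $t$ and the use of Fact~\ref{f:scalingF}, Lemmas~\ref{l:unionofNrA}, \ref{l:unionofnB} and \ref{l:lsc} are also the right ingredients.

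The gap is the step where you derive rationality. You argue that since $F$ is integer-valued and $F(r^{\ell}A,B)$ grows like $q^{\ell}$ ``up to bounded multiplicative error,'' comparing the two recursion hierarchies forces $\log\beta_j/\log r\in\QQ$. This does not follow: two-sided bounds with bounded multiplicative constants are order-of-magnitude information and cannot distinguish rational from irrational exponents (e.g.\ an integer-valued function of the form $\ell\mapsto\lfloor Cq^{\ell}\rfloor$ satisfies every constraint you list for any $C$, with no arithmetic consequence). The paper's actual mechanism is sharper: it forms the $1$-periodic function $z(t)=F(r^{-\{t\}}A,B)\,q^{\{t\}}$ and proves the exact ``subharmonicity'' inequality $z(t)\ge\sum_{j}q^{-\alpha_j}z(t+\alpha_j)$ from Lemmas~\ref{l:unionofNrA} and~\ref{l:unionofnB}. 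Lemma~\ref{l:lsc} gives a minimum $w=z(u)$ on (essentially) a period; since the weights $q^{-\alpha_j}$ sum to $1$, a minimum principle forces $z(u+\alpha_j)=w$ for all $j$, and iterating, $z=w$ on the orbit $u+\sum_j n_j\alpha_j \pmod 1$. If some $\alpha_j$ were irrational this orbit is dense, but $z=w$ on a dense set is impossible because $F(r^{-\{t\}}A,B)=w\,q^{-\{t\}}$ can hold only at the finitely many $t\in[0,1)$ where $w\,q^{-t}$ is a positive integer. (A second, short case handles the possibility that the minimum over $[0,1-\delta]$ is not global.) This density/equidistribution argument on the periodic rescaled count, not a growth-rate comparison, is what extracts rationality; without it, or an equivalent device, your proof of $(i)\Rightarrow(ii)$ does not close.
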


\begin{proof}
If $q=n^k$ for some integers $n,k>1$ then it is easy to see that
$A$ is bilipschitz equivalent to a homogeneous self-similar set 
$A'=\sqcup_{i=1}^n f_i(A')$ with the
SSC such that each $f_i$ has similarity ratio $\root k \of r$.
Therefore we can suppose that $k=1$.

Let $s=\hdim A=\hdim B$.
Then $qr^s=1$.
Let the similarity ratios of $B$ be $\beta_1,\ldots,\beta_m$ and so
    $\beta_1^s+\ldots+\beta_m^s = 1$.
For every $i$ let $\al_i=-\log_q \beta_i^s>0$. Hence $\beta_i^s=q^{-\al_i}=(r^{\al_i})^s$, so $\beta_i=r^{\al_i}$ and 
\begin{equation}\label{e:betasum}
   q^{-\al_1}+\ldots+q^{-\al_m}=1.
\end{equation}

First we prove that \eqref{i2} implies \eqref{i3}.
Since $k=1$, \eqref{i2} implies that $\al_1,\ldots,\al_m$ are positive integers.
We claim that in this case \eqref{e:betasum} implies that $A$ can be also written as a self-similar set using similarity ratios $r^{\al_1},\ldots,r^{\al_m}$. 
Indeed, we can clearly suppose that $\al_1\le\ldots\le\al_m$.
Let $A_1\su A$ be an $r^{\al_1}$-scaled copy of $A$,
let $A_2\su A\sm A_1$ be an $r^{\al_2}$-scaled copy of $A$, and so on. 
One can easily check that  $\al_1\le\ldots\le\al_m$ and \eqref{e:betasum} imply that we never get stuck and finally we obtain a partition $A=A_1\cup\ldots\cup A_m$ such that
each $A_i$ is an $r^{\al_i}$-scaled copy of $A$, which completes the proof of the claim. 
Since $A$ and $B$ are self-similar sets with the SSC and they can be obtained using the same similarity ratios,
Lemma~\ref{sscbilipultra} implies that $A$ and $B$ are bilipschitz equivalent.

Since \eqref{i3} clearly implies \eqref{i1}, it remains to prove that \eqref{i1} implies \eqref{i2}.
So we suppose that $A$ can be mapped onto $B$ by a Lipschitz map 
and we need to show that 
every $\al_i$ is a positive integer.
Using \eqref{e:betasum} and that $q$ is not of the form $q=n^k$ for any integers $n,k>1$, 
by Lemma~\ref{l:algebra} it is enough to prove that every $\al_i$ is rational.
 
Let $g_1,\ldots,g_m$ denote the similarity maps of ratios $\be_1,\ldots,\be_m$,
respectively, such that $B=\sqcup_{j=1}^m g_j(B)$.
We can clearly suppose that 
\begin{equation}\label{e:diameterandgap}
    \min_{i\neq j}\dist(g_i(B),g_j(B))> \frac{\diam A}{r}. 
\end{equation}
Let
\begin{equation*}
z(t)=F(r^{-\{t\}} A, B) q^{\{t\}},   
\end{equation*}
where $\{\cdot \}$ denotes fractional part.
Then $z(t)$ is clearly a $1$-periodic function.

Now we claim that for any $t\in\R$ we have
\begin{equation}\label{e:z}
    z(t)\ge \sum_{j=1}^m q^{-\al_j} z(t+\al_j).
\end{equation}
Indeed, noting that 
$r^{\floor{t}}A$ is the union of 
$N=q^{\floor{t+\al_j}-\floor{t}}$ copies of
$r^{\floor{t+\al_j}} A$, 
we can apply Lemma~\ref{l:unionofNrA} and Fact~\ref{f:scalingF}  to get
\begin{align}\label{e:ize}
\begin{split}
F(r^{\floor{t}}A, r^{t}g_j(B)) q^{\{t\}} &\ge {q^{\floor{t}-\floor{t+\al_j}} F(r^{\floor{t+\al_j}}A, r^{t}g_j(B)}) q^{\{t\}} \\
&= F(r^{-\{t+\al_j\}}A, B) q^{\{t+\al_j\}-\al_j} \\
&=q^{-\al_j} z(t+\al_j) . 
\end{split}
\end{align}
By \eqref{e:diameterandgap} we can apply Lemma~\ref{l:unionofnB} to $r^{\floor{t}}A$
and $r^{t}B=\cup_{j=1}^m r^{t}g_j(B)$.
Using Fact~\ref{f:scalingF}, Lemma~\ref{l:unionofnB} and \eqref{e:ize}
we obtain
\begin{align*}
\begin{split}
z(t)&=F(r^{-\{t\}} A, B) q^{\{t\}}= F(r^{\floor{t}}A, r^{t}B) q^{\{t\}} \\
&=\sum_{j=1}^m F(r^{\floor{t}}A, r^{t}g_j(B))q^{\{t\}} 
\ge \sum_{j=1}^m q^{-\al_j} z(t+\al_j),  
\end{split}
\end{align*}
which completes the proof of \eqref{e:z}.

Choose $\delta\in(0,1)$ such that 
\begin{equation}\label{e:delta}
(\forall j=1,\ldots,m)~ \al_j\not\in\Z
\Longrightarrow \dist(\al_j,\Z)>\delta.
\end{equation}
By Lemma~\ref{l:lsc}, $z$ is lower semicontinuous
on $[0,1)$. This implies that $z$ has a minimum $w$ on $[0,1-\delta]$ obtained at a point $u\in [0,1-\delta]$.

First we consider the case when $w$ is
the minimum of $z$ on $\R$. Then \eqref{e:z}, the minimality of $w$, and \eqref{e:betasum} imply that
\begin{equation*}
w=z(u)\ge \sum_{j=1}^m q^{-\al_j} z(u+\al_j)\ge \sum_{j=1}^m q^{-\al_j} w = w,    
\end{equation*}
which implies that $z(u+\al_j)=w$ for every $j$.
If not every $\al_j$ is rational then, since $z$ is $1$-periodic, we obtain that
$z$ is constant on a dense set. 
But this is impossible, since $z(t)$ is the product of a nonzero integer valued
function and $q^{\{t\}}$.

It remains to consider the case when $w$ is not the minimum of $z$ on $\R$. 
Since $w$ is the minimum of the $1$-periodic $z$ on $[0,1-\delta]$, this implies that there exists a
$v\in(1-\delta,1)$ such that $z(v)<w$.
Then \eqref{e:delta} implies that for any non-integer $\al_j$ we have $\{v+\al_j\}\in [0,1-\delta]$ and so
$z(v+\al_j)\ge w>z(v)$.
Since for integer $\al_j$ clearly $z(v+\al_j)=z(v)$,
by \eqref{e:betasum} we get that if not every $\al_j$
is integer then 
\begin{equation*}
z(v)<\sum_{j=1}^m q^{-\al_j} z(v+\al_j),
\end{equation*}
which contradicts \eqref{e:z}. The proof is complete.
\end{proof}

\section{Open questions}\label{s:questions}

In Theorem~\ref{t:strongssc} the assumption that $A$ is homogeneous is essential since
for general self-similar sets with the SSC (ii) and (iii) are not equivalent, the characterization by Xi \cite{X} of bilipschitz equivalent self-similar sets with the SSC is much more complicated. 
But (i) and (iii) might be equivalent without assuming homogeneity; that is, we do not know if Theorem~\ref{t:ssc} holds for any self-similar sets with the SSC.

Another possible and natural way to improve Theorem~\ref{t:ssc} is to assume only that $A$ can be mapped onto a subset of $B$ of positive measure. We do not know if this stronger result holds. If the answer to both of the above questions is positive then one might ask if both improvements can be made at the same time.

\subsection*{Acknowledgments} Our initial motivation came from Zolt\'an Balogh's inspiring talk on generalizations of the analyst's traveling salesman problem. We are also indebted to Amlan Banaji, Udayan Darji, M\'arton Elekes, Jonathan M.~Fraser, Korn\'elia H\'era, Mikl\'os Laczkovich, Alex Rutar and Sascha Troscheit for some illuminating conversations.
The authors are also grateful to the anonymous referee for the careful reading of the manuscript and for the constructive suggestions that improved the presentation of the paper.

\end{document}